\def\hpq0{h^{p,q}_{\leq 0}}
\def\Hpq0{\H_{\leq 0}^{p,q}}
\def\dt{\partial^{\phi_t}}
\def\dbar{\bar\partial}
\def\ddbar{\partial\dbar}
\def\R{{\mathbb R}}
\def\C{{\mathbb C}}
\def\Po{{\mathbb P}}
\def\G{{\mathcal G}}
\def\F{{\mathcal F}}
\def\dof{\dot{\phi_t}}
\def\dofnu{\dot{\phi_t^\nu}}
\def\V{{\mathcal V}}
\def\H{{\mathcal H}}
\def\E{{\mathcal E}}
\def\Re{{\rm Re\,  }}
\def\Im{{\rm Im\,  }}
\def\be{\begin{equation}}
\def\ee{\end{equation}}
\newtheorem{thm}{Theorem}[section]
\newtheorem{lma}[thm]{Lemma}
\newtheorem{prop}[thm]{Proposition}
\theoremstyle{definition}
\newtheorem{df}{Definition}
\theoremstyle{remark}
\newtheorem{preremark}{Remark}
\newtheorem{preex}{Example}
\numberwithin{equation}{section}
\begin{document}

\title[]
{A Brunn-Minkowski type inequality for Fano manifolds and some uniqueness theorems in K\"ahler geometry.}

\address{B Berndtsson :Department of Mathematics\\Chalmers University
  of Technology 
  \\S-412 96
  G\"OTEBORG\\SWEDEN,\\} 

\email{ bob@chalmers.se}

\author[]{ Bo Berndtsson}

\begin{abstract}
For $\phi$ a metric on the anticanonical bundle, $-K_X$, of a Fano manifold $X$
we consider the volume of $X$
$$
\int_X e^{-\phi}.
$$
In earlier papers we have proved  that the logarithm of the volume is concave along  geodesics
in the space of positively curved metrics on $-K_X$. Our main result here is  that the
concavity is strict unless the geodesic comes from the flow of a
holomorphic vector field on $X$, even with very low regularity assumptions on the geodesic. As a consequence we get a simplified
proof of the Bando-Mabuchi uniqueness theorem for K\"ahler - Einstein
metrics. A generalization of this theorem to 'twisted'
K\"ahler-Einstein metrics and  some classes of manifolds that
satisfy weaker hypotheses than being Fano is also given. We moreover discuss a generalization of the main result to other bundles than $-K_X$, and finally use the same method to give a new proof of the theorem of Tian and Zhu on uniqueness of K\"ahler-Ricci solitons. 
\end{abstract}

\bigskip

\maketitle

\section{Introduction}

Let $X$ be an $n$-dimensional  projective  manifold with
seminegative canonical 
bundle and let $\Omega$ be a domain in the complex plane. We consider
curves $t\rightarrow \phi_t$, with $t$ in $\Omega$,  of metrics on $-K_X$  that have
plurisubharmonic 
variation so that $i\ddbar_{t, X}\phi\geq 0$ ( see section 2 for
notational conventions). Then $\phi$ solves the
homogenous Monge-Amp\`ere equation if  
\be
(i\ddbar_{t,X}\phi)^{n+1}=0.
\ee
Such curves are called (generalized) geodesics, see \cite{1Mabuchi} for the origins of this. 

By a
fundamental theorem of Chen,  \cite{Chen}, we can for any given $\phi_0$ defined
on the boundary of $\Omega$, smooth with nonnegative curvature on $X$
for $t$ fixed on $\partial\Omega$, find a solution of (1.1)
with $\phi_0$ as boundary values. This solution does in general not
need to be smooth (see \cite{1Donaldson},\cite{Lempert-Vivas}, \cite{Darvas}), but Chen's theorem asserts
that we can find a solution that has all mixed complex derivatives
bounded, i e $\ddbar_{t, X}\phi$ is bounded on $X\times \Omega$. The
solution equals the 
supremum (or maximum) of all subsolutions, i e all metrics with
semipositive curvature that are dominated by $\phi_0$ on the
boundary. Chen's proof is based on some of  the methods from Yau's proof of the
Calabi conjecture, so it is not so easy, but it is worth pointing out
that the existence of 
a generalized solution that is only bounded is much easier, see
section 2. 

On the
other hand, if we do assume that $\phi$ is smooth and $i\ddbar_X\phi>0$
on $X$ for any 
$t$ fixed, then
$$
(i\ddbar_{t,X}\phi)^{n+1}=n c(\phi)(i\ddbar_X\phi)^n\wedge i dt\wedge d\bar t
$$
with 
$$
c(\phi)=\frac{\partial^2\phi}{\partial t\partial\bar
  t}-|\dbar\frac{\partial\phi}{\partial t}|^2_{i\ddbar_X\phi},
$$
where the norm in the last term is the norm with respect to the K\"ahler metric
$i\ddbar_X\phi$. Thus equation 1.1 is then equivalent to $c(\phi)=0$. 

The case when $\Omega=\{t; 0<\Re t<1\}$ is a strip  is of
particular interest. If the boundary data are  independent of $\Im
t$ then so is the  solution to 1.1. A famous
observation of Semmes, \cite{Semmes} and Donaldson, \cite{Donaldson}
is that the equation $c(\phi)=0$ then is the equation for a geodesic
in the space of K\"ahler potentials. Chen's theorem then {\it almost}
implies that 
any two points in the space of K\"ahler potentials can be joined by a 
geodesic, the proviso being that we might not be able to keep
smoothness or strict positivity along all of the curve. This problem
causes some difficulties in applications, one of which we will
address in this paper.

The next theorem is a direct consequence of the results in
\cite{1Berndtsson}, \cite{3Berndtsson}.
\begin{thm}Assume that $-K_X\geq 0$ in the sense that it has a smooth metric of semipositive curvature. Let
let $\phi_t$ be a  curve of (possibly singular)   metrics on $-K_X$
such that
$$
i\ddbar_{t,X}\phi\geq 0
$$
in the sense of currents. Then 
$$
\F(t):=-\log\int_X e^{-\phi_t}.
$$
 is subharmonic in $\Omega$. In particular, if $\phi_t$ does not
 depend on the imaginary part of $t$, $\F$ is convex. 
\end{thm}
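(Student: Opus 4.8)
The plan is to recognize $\int_X e^{-\phi_t}$ as the squared $L^2$-norm of a distinguished section of a rank-one direct image bundle, and then to invoke the curvature positivity theorem for direct images from \cite{2Berndtsson}; indeed the statement is essentially that theorem specialized to a trivially twisted adjoint bundle. Set $\X = X\times\Omega$ with the two projections $p:\X\to\Omega$ and $q:\X\to X$, and put $L = q^*(-K_X)$ equipped with the weight $\phi$. The relative canonical bundle is $K_{\X/\Omega} = q^*K_X$, so the twisted bundle $K_{\X/\Omega}+L = q^*(K_X - K_X)$ is canonically trivial. Consequently the fibrewise cohomology $H^0(X,K_X+(-K_X)) = H^0(X,\O_X) = \C$ has constant dimension $1$, and the direct image $E := p_*(K_{\X/\Omega}+L)$ is a line bundle over $\Omega$, generated by the constant section $u_0$ corresponding to $1\in H^0(X,\O_X)$.

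Under the natural identification a section of $K_{\X/\Omega}+L$ over a fibre $X_t$ is an $L$-valued $(n,0)$-form $u$, and its $L^2$-norm is $\int_{X_t} c_n\,u\wedge\bar u\,e^{-\phi_t}$ for the appropriate unimodular constant $c_n$, an expression in which the $L$ and $\bar L$ factors are cancelled by the weight $e^{-\phi_t}$ so that the integrand is a genuine measure on $X$. For the generator $u_0$ this measure is exactly the measure $e^{-\phi_t}$ attached to the metric on $-K_X$, so that in the holomorphic frame $u_0$ the $L^2$-metric $h$ on $E$ is $h(t) = \norm{u_0}^2_t = \int_X e^{-\phi_t}$, whose weight $-\log h$ is precisely $\F(t) = -\log\int_X e^{-\phi_t}$.

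Now I would apply the main positivity result of \cite{2Berndtsson}: since the hypothesis $i\ddbar_{t,X}\phi\geq0$ says precisely that the weight $\phi$ on $L$ is plurisubharmonic on the total space $\X$ and semipositive along the fibres, the $L^2$-metric on $E$ has semipositive curvature. For a line bundle this means that $i\ddbar$ of the weight in any holomorphic frame is nonnegative, so $\F = -\log h$ is plurisubharmonic, i.e. subharmonic, on $\Omega$. The final assertion is then immediate: if $\phi_t$, and hence $\F$, is independent of $\Im t$, then $\Delta\F = \partial^2\F/\partial x^2 \geq 0$ with $x=\Re t$, so $\F$ is convex in $x$.

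The one point requiring care, and the reason the statement is phrased as a consequence of \cite{2Berndtsson} rather than of its smooth prototype, is that $\phi$ is allowed to be singular and only semipositive rather than strictly positive along the fibres; the reduction above is purely formal, so the whole content lies in having the positivity theorem available in this generality. This is achieved in \cite{2Berndtsson} by regularization — approximating $\phi$ by smooth strictly plurisubharmonic weights, applying the smooth curvature formula, and passing to the limit — where it is essential that the dimension of $H^0(X,\O_X)$ cannot jump, so that $E$ remains a genuine line bundle throughout the approximation. I expect this limiting argument, rather than the identification of the volume with $\norm{u_0}^2$, to be the only real obstacle.
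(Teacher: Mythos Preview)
Your proposal is correct and matches the paper's own treatment exactly: the paper states Theorem 1.1 as ``a direct consequence of the results in \cite{2Berndtsson}'' and explains (in the introduction and Section 3) that in the special case $L=-K_X$ the adjoint bundle $K_X+L$ is trivial, $E$ is a line bundle with generator $u=1$, and $\|1\|_t^2=\int_X e^{-\phi_t}$, so positivity of the $L^2$-metric on $E$ is precisely subharmonicity of $\F$. Your remark that the only substantive point is the passage to singular, merely semipositive weights via regularization is also in line with how the paper and \cite{2Berndtsson} handle it.
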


Here we interpret the integral over $X$ in the following way. For any
choice of local coordinates $z^j$ in some covering of $X$ by
coordinate neighbourhoods $U_j$, the metric $\phi_t$ is
represented by a local function $\phi_t^j$. The volume form
$$
c_n e^{-\phi^j_t}  dz^j\wedge d\bar z^j,
$$
where $c_n=i^{n^2}$ is a unimodular constant chosen to make the form
positive, is independent of the choice of local coordinates. We denote this
volume form by $e^{-\phi_t}$, see section 2. 

The results in \cite{1Berndtsson} and \cite{3Berndtsson} 
 deal with more
general line bundles $L$  over $X$ and also more general fibrations than $X\times\Omega$, see section 3. A special case is the trivial vector
bundle $E$ over $\Omega$ with fiber $H^0(X, K_X+L)$ with the
$L^2$-metric
$$
\|u\|^2_t=\int_X |u|^2 e^{-\phi_t},
$$
see section 2. The main result is then a formula for the curvature of
$E$ with the $L^2$-metric. In this paper we study primarily  the simplest special
case, $L=-K_X$. Then $K_X+L$ is trivial so $E$ is a line bundle and
Theorem 1.1 says that this line bundle has nonnegative curvature. In section 9 we shall  be able to extend part of the results we now describe to more general line bundles than $-K_X$. In case $-K_X>0$, so that $X$ is Fano, the result is a simple consequence of H\"ormander's $L^2$-estimates, see \cite{2Berndtsson} for a very short proof in this case. 

Theorem 1.1 is formally analogous to the Brunn-Minkowski inequality
for the volumes of convex sets, and even more to its functional version,
Prekopa's theorem, \cite{Prekopa}. Prekopa's theorem states that if
$\phi$ is a convex function on $\R^{n+1}$, 
then
$$
f(t):=-\log\int_{\R^n}e^{-\phi_t}
$$
is convex. The complex counterpart of this is that we consider a
complex manifold $X$ with a family of volume forms $\mu_t$. In local
coordinates $z^j$  the volume form can be written as above
$\mu_t=c_n e^{-\phi^j_t} dz^j\wedge\bar dz^j$ , and if $\mu_t$ is
globally well defined $\phi^j_t$ are then the local representatives of
a metric, $\phi_t$, on $-K_X$. Convexity in Prekopa's theorem then
corresponds to 
positive, or at least semipositive, curvature of $\phi_t$, so $X$ must
be Fano, or its canonical bundle must  at least have seminegative
curvature  (in some sense:
$-K_X$ pseudoeffective would be the minimal requirement). The
assumption in Prekopa's theorem that the weight is convex with respect
to $x$ and $t$ together then corresponds to the assumptions in Theorem
1.1.

If $K$ is a compact convex set in $\R^{n+1}$ we can take $\phi$ to
be equal to 0 in $K$ and $+\infty$ outside of $K$. Prekopa's theorem
then implies the Brunn-Minkowski theorem, saying that the logarithm of
the volumes of $n$-dimensional slices, $K_t$ of convex sets are
concave; concretely 
\be
|K_{(t+s)/2}|^2\leq |K_t||K_s|
\ee

The Brunn-Minkowski theorem has an important addendum which describes
the case of equality : If equality holds in (1.2)
then all the slices $K_t$ and $K_s$  are translates of each other
$$
K_t=K_s + (t-s)\mathbf v
$$
where $\mathbf v$ is some  vector in $\R^n$.
A little bit
artificially we can formulate this as saying that we move from one
slice to another via the flow of a constant vector field. 

\begin{preremark}
It follows that from (1.2) and the natural homogenity properties of
Lebesgue measure that $|K_t|^{1/n}$, is also concave. This ('additive
version')  is perhaps
the most common formulation of the Brunn-Minkowski inequalities, but
the logarithmic (or multiplicative) version above works better for
weighted volumes and in the complex setting.
 For the additive version
conditions for equality are more liberal; then $K_t$ may change not
only by translation but also by dilation 
(see \cite{Gardner}), but equality in the multiplicative case excludes
dilation. 
\end{preremark}

A natural
question is then if one can draw a similar conclusion in the complex
setting described above. In \cite{2Berndtsson} we proved that this is
indeed so if $\phi$ is known to be smooth and strictly
plurisubharmonic on $X$ for $t$ fixed. The main result of this paper
is the extension of this to less regular situations. We keep the same
assumptions as in Theorem 1.1. 
\begin{thm}Assume that $H^{0,1}(X)=0$, and that the curve of metrics
 $\phi_t$ is 
  independent of the imaginary part of $t$.  Assume moreover that the
  metrics $\phi_t$ are uniformly bounded in the sense that for some
  smooth metric on $-K_X$, $\psi$,
$$
|\phi_t-\psi|\leq C.
$$
Then, if the function $\F$ in Theorem 1.1 is affine in $\Omega$, 
there is a  holomorphic vector field $V$ on
$X$ with flow $F_t$ such 
that
$$
F_t^*(\ddbar\phi_t) =\ddbar\phi_0.
$$
\end{thm}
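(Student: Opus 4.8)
The plan is to reduce the statement to the smooth, fibrewise strictly positive situation already treated in \cite{2Berndtsson}, where everything can be computed explicitly, and then to strip away the regularity hypotheses by approximation. Since $\phi_t$ is independent of $\Im t$, the variable $t$ is effectively real and $\dof=\partial\phi_t/\partial t$ is a real-valued function on $X$. When $\phi_t$ is smooth with $i\ddbar_X\phi>0$, set $\omega=i\ddbar_X\phi$ and let $d\mu=e^{-\phi}/\int_X e^{-\phi}$ be the associated probability measure, with mean $\hake{\dof}=\int_X\dof\,d\mu$. Differentiating $\F$ twice and using the definition of $c(\phi)$ from the introduction, one obtains the second-variation identity
\[
\paren{\partial_t\partial_{\bar t}\F}\int_X e^{-\phi}
=\int_X c(\phi)\,e^{-\phi}
+\paren{\int_X\abs{\dbar\dof}_\omega^2\,e^{-\phi}
-\int_X\abs{\dof-\hake{\dof}}^2 e^{-\phi}},
\]
which is the curvature formula of \cite{2Berndtsson} specialised to $L=-K_X$ and the constant section $u\equiv 1$. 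The two terms on the right are each nonnegative: $c(\phi)\geq 0$ is the fibrewise content of Theorem 1.1, while the remaining bracket is a weighted Poincar\'e inequality. The feature special to $-K_X$ is that the Bakry--\'Emery Ricci tensor of the weighted metric $\paren{i\ddbar_X\phi,\,e^{-\phi}}$ equals $i\ddbar_X\phi$ itself, so the bottom of the spectrum of the weighted $\dbar$-Laplacian on functions is $\geq 1$, which is precisely the Poincar\'e inequality above.

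Granting this, the rigidity is immediate in the smooth case. If $\F$ is affine near $0$ then $\partial_t\partial_{\bar t}\F\equiv 0$ there, and, both terms being nonnegative, each vanishes. Vanishing of the first forces $c(\phi)\equiv 0$, so $\phi$ solves the homogeneous Monge--Amp\`ere equation and $\phi_t$ is a genuine geodesic. Vanishing of the second is equality in the Poincar\'e inequality, and the equality case of the underlying Bochner inequality says exactly that the $(1,0)$-gradient $V:=\nabla^{1,0}\dof$ (the field dual to $\dbar\dof$ via $\omega$, possibly $t$-dependent) is holomorphic. Here the hypothesis $H^{0,1}(X)=0$ enters: it makes the relevant Hodge groups vanish, so that $\dbar\dof$ determines $V$ without obstruction and the space of such $V$ is finite dimensional. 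Finally, the geodesic equation $c(\phi)=0$ together with holomorphicity of $V$ is, in the Semmes--Donaldson picture, precisely the assertion that the flow $F_t$ of (the real part of) $V$ transports the fibre metrics, giving $F_t^*(\ddbar\phi_t)=\ddbar\phi_0$.

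The main difficulty, and the reason Theorem 1.2 is not formal, is that under the hypotheses $\phi_t$ is only bounded (with $\ddbar_{t,X}\phi$ bounded, as produced by Chen's theorem), so neither the second-variation identity nor the Bochner argument applies directly. The plan is therefore to approximate $\phi_t$ by smooth subgeodesics $\phi_t^\nu$ that are strictly positive on the fibres --- for instance by regularising and solving the $\epsilon$-perturbed Monge--Amp\`ere equation --- to apply the smooth identity to each $\phi^\nu$, and then to pass to the limit. Because the qualitative equality argument is unstable, what is actually required is a quantitative version: the defect in affineness of the approximate energies $\F^\nu$ controls the sum of the two nonnegative terms and hence, via the spectral gap, the $L^2$-distance of $\nabla^{1,0}\dof^\nu$ to the finite dimensional space of holomorphic vector fields. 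The uniform bound $\abs{\phi_t-\psi}\leq C$ supplies the compactness needed to extract a limiting holomorphic field $V$ and to interpret $F_t^*(\ddbar\phi_t)=\ddbar\phi_0$ as an identity of currents. Controlling this limit --- ensuring that the approximate extremisers converge to an honest holomorphic vector field whose flow transports the bounded currents $\ddbar\phi_t$ --- is the step I expect to be the crux.
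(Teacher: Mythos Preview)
Your second-variation identity and the Bochner/Poincar\'e argument is exactly the approach of \cite{2Berndtsson}, and it needs $\omega=i\ddbar_X\phi>0$ on each fibre to make sense of $|\dbar\dof|^2_\omega$, of the complex gradient $V=\nabla^{1,0}_\omega\dof$, and of the spectral gap. The step you yourself flag as the crux --- passing to the limit through strictly positive approximants $\phi^\nu$ --- is where this route hits a genuine obstacle. All of your quantitative control is in the $\omega^\nu$-norm, and since the limiting current $i\ddbar_X\phi$ need not be a metric (it can vanish on open sets), there is no uniform lower bound on $\omega^\nu$. The only a priori bound you obtain on the approximate gradients is $\int_X|V^\nu|^2_{\omega^\nu}e^{-\phi^\nu}\leq C$, which gives no control of $V^\nu$ in any fixed norm and hence no compactness; likewise, smallness of the Poincar\'e defect bounds $\dbar V^\nu$ only in the degenerating $\omega^\nu$-norm, not its distance to the finite-dimensional space of holomorphic fields in a norm that survives the limit. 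Your proposal does not explain how to close this gap, and I do not see how to do it along these lines.

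The paper avoids the difficulty by a different curvature formula (Theorem~3.1; the remark following it notes explicitly that this is \emph{not} the formula from \cite{2Berndtsson}). Instead of defining $V$ through the fibre metric, one first constructs an $L$-valued $(n-1,0)$-form $v_t$ solving $\partial^{\phi_t}v_t=\pi_\perp(\dof\,u)$ with $\dbar v_t\wedge\omega=0$, where $\omega$ is a \emph{fixed} K\"ahler form on $X$. The decisive observation (Remark~2 in Section~3) is that the $L^2$-estimate for this equation comes from closed range of $\dbar$ and depends only on the uniform bound $|\phi_t-\psi|\leq C$, never on any positivity of $i\ddbar_X\phi_t$; thus $v_t^\nu$ is uniformly bounded in $L^2$ and has a weak limit $v_t$. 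The curvature formula then forces $\int_X\|\dbar v_t\|^2e^{-\phi_t}=0$ directly, so $v_t$ is holomorphic, and only \emph{afterwards} is the vector field defined by $-v_t=V_t\rfloor u$, using that the trivialising section $u$ of $K_X+(-K_X)$ never vanishes. As the paper puts it, one trades nonvanishing of $i\ddbar\phi_t$ for nonvanishing of $u$; this is the idea your plan is missing. Incidentally, the hypothesis $H^{0,1}(X)=0$ is not used to define $V$ as you suggest, but to make $v_t$ unique and, in Section~4, to run the weak-limit argument (via Lemmas~4.1 and~4.2) showing that $v_t$ is holomorphic in $t$ as well.
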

The assumption that $H^{0,1}(X)=0$ enters into the proof at several places, but I do not know if it is necessary for the theorem to hold. Notice however that it is automatically satisfied if $X$ is Fano. Then $-K_X>0$ so $H^{0,1}(X)=H^{n,1}(X, -K_X)=0$ by Kodaira vanishing. More generally, if $-K_X$ is supposed to be 'big', $H^{n,1}(X, -K_X)$ also vanishes by the Demailly-Nadel vanishing theorem.

There should also be a version of the theorem  without the assumption that
$\phi_t$ be independent of the imaginary part of $t$, and then
assuming that $\F$ be harmonic instead of affine.  The proof
then seems to require more regularity assumptions. For simplicity we
therefore treat only the case when $\phi_t$ is independent of $\Im t$,
which anyway seems to be the most useful in applications. 
\bigskip

\noindent This theorem is useful in view of the discussion above on the possible lack of
regularity of geodesics. As we shall
see in section 2 the existence of a generalized geodesic satisfying
the boundedness assumption in Theorem 1.2 is almost trivial.  One
motivation for the theorem is to give a new proof of the Bando-Mabuchi
uniqueness theorem for K\"ahler-Einstein metrics on Fano
manifolds. Recall that a metric $\omega_\psi=i\ddbar\psi$, with
$\psi$ a metric on $-K_X$ solves the  K\"ahler-Einstein equation if 
$$
\text{Ric}(\omega_\psi)=\omega_\psi
$$
or equivalently if for some positive $a$
\be
e^{-\psi}=a(i\ddbar\psi)^n,
\ee
where we use the convention above to interpret $e^{-\psi}$ as a volume
form. By a celebrated theorem of Bando and Mabuchi (see section 5),  any two
K\"ahler-Einstein metrics $i\ddbar\phi_0$ and 
$i\ddbar\phi_1$ are related via the time-one flow of a holomorphic vector
field. In section 5 we shall give a proof of this fact by
joining $\phi_0$ and $\phi_1$ by a geodesic and applying Theorem
1.2. This proof also shows that the uniqueness theorem of Bando-Mabuchi holds also for solutions of (1.3) that are only assumed to be bounded. The original proof of Bando and Mabuchi used monotonicity properties of the Mabuchi K-energy (\cite{2Mabuchi})  along curves obtained from solving a continuous family of Monge-Ampere equations, and thus seems to require higher regularity. Below we will also consider 'twisted' K\"ahler-Einstein equations, whose solutions are never smooth, and then this difference between the proofs is perhaps more important.

It should be noted that a similar proof of the  Bando-Mabuchi
theorem has already been given by
Berman, \cite{Berman}. The difference between his proof and ours is
that he uses the weaker version of Theorem 1.2 from
\cite{2Berndtsson}. He then needs to prove that the geodesic joining
two K\"ahler-Einstein metrics is in fact smooth, which we do not
need, and we also avoid the use  of 
Chen's theorem since we only need the existence of a bounded
geodesic.

 A minimal assumption in Theorem 1.2 would be that $e^{-\phi_t}$ be
 integrable, instead of bounded. I do not know if the theorem holds
 in this generality, but in section 6 we will consider an
 intermediate situation where $\phi_t=\tau_t +\psi$, with $\tau_t$
 bounded and $\psi$ such that $e^{-\psi}$ is integrable, so that the
 singularities don't change with $t$. Under various positivity
 assumptions we are then able to prove a version of Theorem 1.2.
 
Apart from making the problem technically simpler, this extra
assumption that  $\phi_t=\tau_t +\psi$ also introduces an additional
structure, which seems interesting in itself. 
In section 7 we use it to  give a generalization of the Bando-Mabuchi
theorem to certain 'twisted' K\"ahler-Einstein equations, 
\be
\text{Ric}(\omega)=\omega +\theta
\ee
considered
in \cite{Szekelyhidi},\cite{2Berman} and  \cite{2Donaldson}. Here
$\theta$ is a fixed 
positive $(1,1)$-current, that may  e g be the current of integration
on a klt divisor.
The conclusion of our theorem is  that in (1.4) we have uniqueness modulo the time one flow of a vector field that fixes $\theta$. We shall also see, in section 8,  that in many cases, this means that we in fact have absolute uniqueness.

After this, in section 9,  we briefly discuss a variant of Theorem 1.2 for more general line bundles, $L$, than $-K_X$. We then replace the functional 
$$
\F(t)=-\log\int e^{-\phi_t},
$$
by a variant,  introduced in \cite{2Berndtsson},  of Donaldson's $L$-functional, \cite{3Donaldson}. Finally, in section 10, following a suggestion of Yanir Rubinstein, we show how Theorem 1.2 also implies a theorem of Tian and Zhu, \cite{Tian-Zhu}, on uniqueness for K\"ahler-Ricci solitons. This has also been noted independently by W He, \cite{He-man}.

Another paper that is very much related to this one is \cite{Berman
  et al}, by Berman -Boucksom-Guedj-Zeriahi. There is introduced a
variational approach to Monge-Ampere equations and K\"ahler-Einstein
equations in a nonsmooth setting and a uniqueness theorem a la
Bando-Mabuchi is proved in the absence of holomorphic vector fields, using continuous geodesics. After the first version of this paper was written, the results have also been generalized to some singular varieties in \cite{2Berman et al}. I would like to thank all of these authors   for
helpful discussions, and Robert Berman in particular for
proposing the generalized Bando-Mabuchi theorem in section 7. Finally I am very grateful to two referees for valuable comments, in particular for a suggestion how to prove that the vector field in Theorem 1.2 is time independent. 

\section{Preliminaries}

\subsection{Notation}
Let $L$ be a line bundle over a complex manifold $X$, and let $U_j$ be a
covering of the manifold by open sets over which $L$ is locally
trivial. A section of $L$ is then represented by a collection of
complex valued functions $s_j$ on $U_j$ that are related by the
transition functions of the bundle, $s_j=g_{j k} s_k$. A metric on $L$
is given by a collection of realvalued functions $\phi^j$ on $U_j$,
related so that
$$
|s_j|^2 e^{-\phi^j}=:|s|^2 e^{-\phi}=:|s|^2_\phi
$$
is globally well defined. We will write $\phi$ for the collection
$\phi^j$, and refer to $\phi$ as the metric on $L$, although it might
be more appropriate to call $e^{-\phi}$ the metric. (Some authors call
$\phi$ the 'weight' of the metric.) We say that $L$ is positive, $L>0$,  if $\phi$ can be chosen smooth with curvature $i\ddbar\phi$ strictly positive, and that $L$ is semipositive, $L\geq 0$,  if it has a smooth metric of semipositive curvature.  

A metric $\phi$ on $L$ induces an $L^2$-metric on the adjoint bundle
$K_X+L$. A section $u$ of $K_X+L$ can be written locally as
$$
u= dz\otimes s
$$
where $dz=dz_1\wedge ...dz_n$ for some choice of local coordinates
and $s$ is a section of $L$. We let
$$
|u|^2 e^{-\phi}:= c_n dz\wedge d\bar z |s|_\phi^2;
$$
it is a volume form on $X$. The $L^2$-norm of $u$ is
$$
\|u\|^2:=\int_X |u|^2 e^{-\phi}.
$$
Note that the $L^2$ norm depends only on the metric $\phi$ on $L$ and
does not involve any choice of metric on the manifold $X$. 

In this paper we will be mainly interested in the case when $L=-K_X$
is the anticanonical bundle. Then the adjoint bundle $K_X+L$ is
trivial and is canonically isomorphic to $X\times \C$ if we have
chosen an isomorphism between $L$ and $-K_X$. This bundle then has a
canonical trivialising section, $u$ identically equal to 1. With the
notation above
$$
\|1\|^2 =\int_X |1|^2 e^{-\phi}=\int_X e^{-\phi}.
$$
This means explicitly that we interpret the volume form
$e^{-\phi}$ as 
$$
dz^j\wedge d\bar z^j e^{-\phi_j}
$$
where $e^{-\phi^j}= |(dz^j)^{-1}|_\phi^2$ is the local representative of
the metric for the frame determined by the local coordinates. Notice
that this is consistent with the conventions indicated in the
introduction. 

\subsection{Bounded geodesics}
We now consider curves $t\rightarrow \phi_t$ of metrics on the line bundle
$L$. Here $t$ is a complex parameter but we shall (almost) only look
at curves that do not depend on the imaginary part of $t$. We say that
$\phi_t$ is a subgeodesic if $\phi_t$ is upper semicontinuous and
$i\ddbar_{t, X}\phi_t\geq 0$,  so that local
representatives are plurisubharmonic with respect to $t$ and $X$
jointly. We say that $\phi_t$ is bounded if
$$
|\phi_t-\psi|\leq C
$$
for some constant $C$ and some (hence any) smooth metric on $L$. For
bounded geodesics the complex Monge-Ampere operator is well defined
and we say that $\phi_t$ is a (generalized) geodesic if
$$
(i\ddbar_{t, X} \phi_t)^{n+1}=0.
$$

\bigskip

\noindent Let $\phi_0$ and $\phi_1$ be two bounded metrics on $L$ over
$X$ satisfying
$i\ddbar\phi_{0,1}\geq 0$.  We claim that there is a bounded geodesic
$\phi_t$ defined for the real part of $t$ between 0 and 1, such that
$$
\lim_{t\rightarrow 0,1} \phi_t =\phi_{0,1}
$$
uniformly on $X$. The curve $\phi_t$ is defined by
\be
\phi_t=\sup \{\psi_t\}
\ee
where the supremum is taken over all plurisubharmonic $\psi_t$ with 
$$
 \lim_{t\rightarrow 0,1} \psi_t \leq \phi_{0,1}.
$$
To prove that $\phi_t$ defined in this way has the desired properties
we first construct a barrier 
$$
\chi_t =\text{max} (\phi_0 -A\Re t, \phi_1 +A(\Re t -1)).
$$
Clearly $\chi$ is plurisubharmonic and has the right boundary values
if $A$ is sufficiently large. Therefore the supremum in (2.1) is the
same if we restrict it to $\psi$ that are larger than $\chi$. For such
$\psi$ the onesided derivative at 0 is larger than $-A$ and the
onesided derivative at 1 is smaller than  $A$. Since we may moreover
assume that $\psi$ is independent of the imaginary part of $t$, $\psi$
is convex in $t$ so the
derivative with respect to $t$ increases, and must therefore lie
between $-A$ and $A$. Hence $\phi_t$ satisfies
$$
\phi_0 -A\Re t\leq \phi_t\leq \phi_0 +A\Re t
$$
and a similar estimate at 1. Thus $\phi_t$ has the right boundary
values uniformly. In addition, the upper semicontinuous regularization
$\phi_t^*$ 
of $\phi_t$ must satisfy the same estimate. Since $\phi_t^*$ is
plurisubharmonic it belongs to the class of competitors for $\phi_t$
and must therefore coincide  with $\phi_t$, so $\phi_t$ is
plurisubharmonic. That finally $\phi_t$ solves the homogenuous
Monge-Ampere equation follows from the fact that it is maximal with
given boundary values, see e g \cite{Guedj-Zeriahi}, Thm 2.20. 

Notice that as a byproduct of the proof we have seen that the geodesic
joining two bounded metrics is uniformly Lipschitz in $t$. This fact
will be very useful later on. 
\subsection{Approximation of metrics and subgeodesics}

In the proofs we will need to approximate our metrics that are only
bounded, and sometimes not even bounded, by smooth metrics. Since we
do not want to lose too much of the positivity of curvature
this causes some complications. An extensive treatment of
these matters can be found in \cite{Demailly}. Here we will need only
the simplest part of this theory and we also refer to
\cite{Blocki-Kolodziej} for an elementary proof. We collect the approximation  results that we need in a proposition.
\begin{prop} Let $M$ be a complex manifold with a positive hermitean form $\omega$, and let $L$ be a complex line bundle over $M$. Let $\phi$ be a bounded metric on $L$ such that $i\ddbar\phi\geq 0$. Let $M'$ be a relatively compact domain in $M$ (which could be $M$ itself if $M$ is compact). Then there is a strictly decreasing sequence $\phi_j$ of smooth metrics on $L$ over $M'$ with limit $\phi$, such that
$$
i\ddbar\phi_j\geq -\epsilon_j\omega,
$$
where $\epsilon_j>0$ tends to zero. Moreover:

(1) If $L\geq 0$ this result holds without the assumption that $\phi$ be bounded 

and

(2) If $L>0$, $\phi_j$ can be chosen so that $i\ddbar\phi_j>0$, and the result holds without the assumption that $\phi$ be bounded.

\end{prop}
\begin{proof} This is basically the main result in \cite{Blocki-Kolodziej}, and for the convenience of the reader we translate to the language of $\gamma$-plurisubharmonic functions used in that paper. Let $\psi$ be a smooth metric on $L$ and let $\gamma:=i\ddbar\psi$. To any metric $\phi$ on $L$, we associate the function $\varphi:=\phi-\psi$. The condition $i\ddbar\phi\geq 0$ then says that $\varphi$ is $\gamma$-plurisubharmonic, i e that
$$
i\ddbar\varphi\geq -\gamma.
$$
Similarily, $i\ddbar\phi>0$ means that $i\ddbar\varphi >- \gamma$, and $i\ddbar\phi\geq- \epsilon\omega$ means that $\phi$ is $(\gamma+\epsilon\omega)$-plurisubharmonic. The first statement of the proposition is (a special case of)  Theorem 2 in \cite{Blocki-Kolodziej}. For  statement (2) concerning positive bundles, we can assume that $\gamma>0$. Choose $\phi_j$ as in the first part, and let $\varphi_j:=\phi_j-\psi$. Since $\phi_j$ are smooth and decrease, we may assume these functions are negative. Then, if $\delta_j$ decrease to zero, $(1-\delta_j)\varphi_j$ decrease and $i\ddbar (1-\delta_j)\varphi_j\geq -(1-\delta_j)(\gamma +\epsilon_j\omega)>-\gamma$, if $\delta_j$ goes to zero sufficiently slowly. Thus $\phi$ can be approximated with a sequence of metrics of strictly positive curvature. If $\phi$ is not bounded, we apply this argument to $\varphi^A:=\max(\varphi, -A)$, if $A>0$. For each $A$ we get a sequence, $\varphi_j^A$  of strictly $\gamma$-plurisubharmonic functions that decrease to $\varphi^A$. Then take a sequence $A_\nu$ that increases to infinity and let
$$
\varphi_\nu:=\varphi_{j_\nu}^{A_\nu},
$$
where $j_\nu$ is chosen inductively so that 
$$
\varphi_{j_{\nu+1}}^{A_{\nu+1}}<\varphi_{j_\nu}^{A_\nu}.
$$
This is possible by Dini's lemma since $\varphi_j^{A_{\nu+1}}$ is a decreasing sequence of continuous functions whose limit, $\varphi^{A_{\nu+1}}$ is strictly smaller than the right hand side. This argument also proves (1). 
\end{proof}
Besides using Proposition 2.1 to approximate metrics on a line bundle over $X$, we can also apply it to the manifold $S\times X$, $S=\{t;0<\Re t<1\}$, to approximate (sub)geodesics over any relatively compact subdomain of $S$. In case the (sub)geodesic depends only on $\Re t$ we can then obtain smooth approximants that also depend only on $\Re t$. To see this, we  replace $S$ by an annulus by a conformal change of coordinates in $t$, and take averages of $\phi_j$ over the circle.

At one point we also wish to treat a bundle that is not even
semipositive, but only effective. It then has a global holomorphic
section, $s$, and the singular metric we are interested in is
$\log|s|^2$, or some positive multiple of it. We then let $\psi$ be
any smooth metric on the bundle and approximate by
$$
\phi^\nu:=\log( |s|^2 + \nu^{-1}e^{\psi}).
$$
Explicit computation shows that $i\ddbar\phi^\nu\geq -C\omega$ where $C$
is some fixed constant. Moreover, outside any fixed neighbourhood of
the zerodivisor of $s$,
$$
i\ddbar\phi^\nu\geq -\epsilon_\nu\omega
$$
with $\epsilon_\nu$ tending to zero. This weak approximation will be
enough for our 
purposes.

Let us finally note that we know from the barrier construction in the previous subsection that a bounded geodesic $\phi_t$ has uniformly bounded $t$-derivative, $\dof$. A similar argument shows that
 an approximating sequence $\phi^\nu$, decreasing to a bounded geodesic $\phi$, also can be chosen so that it has uniformly bounded $t$-derivative. For this it is enough to replace $\phi^\nu$ by
$$
\max(\phi^\nu_t, \max(\phi_0^\nu-A\Re t, \phi_1^\nu+A(\Re t-1))).
$$
This function still decreases to $\phi$ and has derivative bounded between $A$ and $-A$. It is not smooth because of the max construction, but we can replace  the maximum by a smoothed out version of max. The upshot of this is that we will (see lemma 4.1) also get dominated convergence almost everywhere for the time derivatives of the approximating sequence.

\subsection{Monge-Ampere energy} In this subsection we collect some basic properties of the Monge-Ampere energy. These facts are  well known at least in the smooth case; our purpose here is to check that they still hold for bounded curves, and we follow the arguments in  \cite{Berman}. Let $\phi_0$ and $\phi_1$ be two bounded metrics on a line bundle $L$, satisfying $i\ddbar\phi_j\geq 0$. Then their relative Monge-Ampere energy
\be
\E(\phi_1,\phi_0):=(1/n)\int_X (\phi_1-\phi_0)\sum_0^n (i\ddbar\phi_1)^k\wedge(i\ddbar\phi_0)^{n-k}
\ee
is well defined by basic pluripotential theory. (We will change the normalization later by dividing by the volume of $L$.) It has the property that if $\phi_t$ depends smoothly on $t$, then 
$$
(d/dt)\E(\phi_t,\phi_0)=\int_X \dof (i\ddbar\phi_t)^n,
$$
and $\E(\phi_0,\phi_0)=0$; these properties are sometimes taken as an alternative definition of $\E$. We could also write, if $\phi_t$ is just a bounded subgeodesic,
$$
\E(\phi_t,\phi_0)= p_*((\phi_t-\phi_0))\sum_0^n (i\ddbar_{t,X}\phi_t)^k\wedge(i\ddbar\phi_0)^{n-k}),
$$
where $p$ is the natural projection from $X\times \Omega$ to $\Omega$, and $p_*$ is the pushforward of a current. Since the pushforward commutes with differentiation, the last formula shows that
$$
i\ddbar_t\E(\phi_t,\phi_0)=(1/n)p_*((i\ddbar_{t,X}\phi_t)^{n+1}-(i\ddbar\phi_0)^{n+1})= (1/n)p_*((i\ddbar_{t,X}\phi_t)^{n+1}).
$$
Using the definition of $c(\phi)$ from the introduction we can also write this as 
$$
i\ddbar_t\E(\phi_t,\phi_0)= \int_X c(\phi_t)(i\ddbar\phi_t)^n idt\wedge d\bar t.
$$
At any rate we see that $\E$ is convex along bounded subgeodesics and affine along bounded geodesics. It also follows (most easily from the last formula) that on an affine line $\phi_t=\phi_0 +t(\phi_1-\phi_0)$, $\E$ is {\it concave}, with derivative
$$
(d/dt)_{t=0}\E(\phi_t,\phi_0)=\int_X (\phi_1-\phi_0)(i\ddbar\phi_0)^n
$$
(use (2.2)). The concavity shows that
\be
\E(\phi_1,\phi_0)\leq \int_X (\phi_1-\phi_0)(i\ddbar\phi_0)^n.
\ee
If we replace $\phi_1$ by $\phi_t$ in (2.3), with $\phi_t$ a bounded subgeodesic we see by monotone convergence that the derivative of $\E$ from the right satisfies
\be
(d/dt)_{t=0,+}\E(\phi_t,\phi_0)\leq \int_X (\dot{\phi_0})_+(i\ddbar\phi_0)^n.
\ee
Similarily, the derivative at $t=1$ from the left satisfies
\be
(d/dt)_{t=1,-}\E(\phi_t,\phi_0)\geq \int_X (\dot{\phi_1})_-(i\ddbar\phi_1)^n.
\ee
We will have use for these formulas in section 5. 
\section{The smooth case}

In this section we let $L$ be a holomorphic line bundle over $X$ and
$\Omega$ be a smoothly bounded open set in $\C$. Fix once and for all one K\"ahler form on $X$, $\omega$. We consider
the trivial  vector bundle $E$ 
over $\Omega$ with fiber $H^0(X, K_X+L)$. In this section we let throughout  $\phi_t$ be a smooth
curve of 
metrics on $L$, with $t$ a complex parameter.  For any fixed $t$,
$\phi_t$ 
 induces an
$L^2$-norm on $H^0(X, K_X+L)$ as described in the previous section
$$
\|u\|^2_t=\int_X |u|^2 e^{-\phi_t},
$$
and as $t$ varies we get an hermitian metric on the vector bundle
$E$. 

We now recall a formula for the curvature of $E$ with this metric
from \cite{1Berndtsson},\cite{3Berndtsson}. Let for each $t$ in $\Omega$ 
$$
\dt= e^{\phi_t}\partial e^{-\phi_t}=\partial-\partial \phi_t\wedge.
$$
We let this operator act on $L$-valued forms, $v$, of bidegree $(n-1,0)$, and we interpret it locally in terms of some local trivialization. It can be easily checked that it is globally well defined. 
 
Let $v$ be an $L$-valued $(n-1,0)$-form and write
$\alpha=v\wedge \omega$, where $\omega$ is the fixed K\"ahler form on $X$.  Then (modulo a sign)
$$
\dt v=\dbar^*_{\phi_t}\alpha,
$$
the adjoint of the $\dbar$-operator for the metric $\phi_t$. In
particular this  shows again that the operator $\dt$ is well defined on
$L$-valued forms. 

This also
means that for any $t$ we can solve the equation
$$
\dt v=\eta,
$$
if $\eta$ is an $L$-valued $(n,0)$-form that is orthogonal to the
space of holomorphic $L$-valued forms (see remark 2 below). Moreover
by choosing $\alpha=v\wedge\omega$ 
orthogonal to the kernel of $\dbar^*_{\phi_t}$ we can assume that
$\alpha$ is $\dbar$-closed, so that $\dbar v\wedge \omega=0$.( Hence,
with this choice, $\dbar v$ is a primitive form.)
If, as we assume from now, the
cohomology $H^{n, 1}(X,L)=0$, the $\dbar$-operator is surjective on
$\dbar$-closed forms, so
the adjoint is injective, and $v$ is uniquely determined by $\eta$. 

\begin{preremark}
 The reason we can always solve this equation for $t$ and
$\phi$ 
fixed is that the $\dbar$-operator from $L$-valued $(n,0)$-forms to
$(n,1)$-forms  on $X$ has closed range. This implies that the adjoint
operator
$\dbar^*_{\phi_t}$ also has closed range and that its range is equal
to the orthogonal complement of the kernel of $\dbar$. Moreover, that
$\dbar$ has closed range means precisely that for any $(n,1)$-form in
the range of $\dbar$ we can solve the equation $\dbar f=\alpha$ with
an estimate 
$$
\|f\|\leq C\|\alpha\|
$$
and it follows from functional analysis that we then can solve $\dt
v=\eta$ with the bound
$$
\| v\|\leq C\|\eta\|
$$
where $C$ is {\it the same} constant. We apply these general facts to the norms $\|\cdot\|=\|\cdot\|_{\omega,\phi_t}$ defined by our fixed K\"ahler form $\omega$ and metrics $\phi_t$. In case all metrics $\phi_t$ are
of equivalent size, so that $|\phi_t-\phi_{t_0}|\leq A$ it follows
that we can solve $\dt v=\eta$ with an $L^2$-estimate independent of
$t$. This observation is of crucial importance in the sequel. 
\end{preremark}

Let $u_t$
be a holomorphic section of the bundle $E$ and let 
$$
\dof:=\frac{\partial\phi}{\partial t}.
$$

\bigskip

For each $t$ we now solve
\be
\dt v_t=\pi_\perp(\dof u_t),
\ee
where $\pi_\perp$ is the orthogonal projection on the orthogonal
complement of the space of holomorphic forms, with respect to the
$L^2$-norm  $\|\cdot\|_t^2$. With this choice of $v_t$ we obtain the
following formula for the curvature of $E$, see \cite{1Berndtsson},
\cite{3Berndtsson}. In the formula, $p$ stands for the natural
projection map from $X\times\Omega$ to $\Omega$ and $p_*(T)$ is the
pushforward of a differential form or current. When $T$ is a smooth
form this is the fiberwise integral of $T$. 
\begin{thm} Let $\Theta$ be the curvature form on $E$ and let $u_t$ be
  a holomorphic section of $E$. For each $t$ in
  $\Omega$ let $v_t$ solve (3.1) and be such that $\dbar_X
  v_t\wedge \omega=0$. 
Put
$$
\hat u=u_t-dt\wedge v_t.
$$
Then
\be
\langle\Theta u_t,u_t\rangle_t= p_*(c_n i\ddbar_{t,X}\phi \wedge \hat
u\wedge\overline{\hat u}\, e^{-\phi})  
+\int_X \|\dbar v_t\|^2 e^{-\phi_t}idt\wedge d\bar t.
\ee
\end{thm}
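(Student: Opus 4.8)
The plan is to reduce the computation, via the standard curvature identity for a Hermitian holomorphic bundle, to two scalar ingredients: the complex Hessian in $t$ of $\|u_t\|^2$ and the norm of the $(1,0)$--part $D'u_t$ of the Chern connection; and then to recognise the resulting compact expression as the stated, manifestly nonnegative, quantity by expanding the fibre integral and applying the K\"ahler identities on $X$. Indeed, for a holomorphic section $u_t$ one has $\partial\|u_t\|^2=\langle D'u_t,u_t\rangle$, and differentiating once more shows that the coefficient of $i\,dt\wedge d\bar t$ in $\langle\Theta u_t,u_t\rangle$ is $\|D'_tu_t\|^2-\partial_t\partial_{\bar t}\|u_t\|^2$. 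So everything comes down to computing these two scalars.

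First I would differentiate $\|u_t\|^2=\int_X|u_t|^2e^{-\phi_t}$ twice under the integral sign. Using that $u_t$ is holomorphic in $t$ (so $\partial_{\bar t}u_t=0$ and $\partial_{\bar t}\partial_t u_t=0$) and that $\partial_{\bar t}\dof=\phi_{t\bar t}$, a direct computation with no integration by parts gives
$$\partial_t\partial_{\bar t}\|u_t\|^2=\|w\|^2-\int_X\phi_{t\bar t}|u_t|^2e^{-\phi},\qquad w:=\partial_t u_t-\dof u_t.$$
Testing $\partial_t\langle u_t,y\rangle$ against holomorphic sections $y$ identifies $D'_tu_t$ as the orthogonal projection $P$ of $w$ onto the holomorphic forms; since $\partial_t u_t$ is itself holomorphic, $\pi_\perp w=-\pi_\perp(\dof u_t)=-\dt v_t$ by the defining equation (3.1). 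Hence $\|D'_tu_t\|^2-\|w\|^2=-\|\pi_\perp w\|^2=-\|\dt v_t\|^2$, and I obtain the compact formula that the coefficient of $i\,dt\wedge d\bar t$ in $\langle\Theta u_t,u_t\rangle$ equals
$$\int_X\phi_{t\bar t}|u_t|^2e^{-\phi}-\|\dt v_t\|^2.$$

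It remains to match this with the stated right hand side. I would expand $p_*\big(c_n\,i\ddbar\phi\wedge\hat u\wedge\overline{\hat u}\,e^{-\phi}\big)$ by splitting $i\ddbar\phi$ on $X\times\Omega$ into its $X$--Hessian $i\ddbar_X\phi$, the two mixed parts $\sum_j\phi_{j\bar t}\,dz_j\wedge d\bar t$ and its conjugate, and the part $\phi_{t\bar t}\,dt\wedge d\bar t$, and by expanding $\hat u\wedge\overline{\hat u}$ into the four pieces coming from $\hat u=u_t-dt\wedge v_t$. Only four pairings carry the correct bidegree $(n,n)$ on $X$ and $(1,1)$ in $t$: the $u_t\wedge\bar u_t$ term against $\phi_{t\bar t}$ reproduces $\int_X\phi_{t\bar t}|u_t|^2e^{-\phi}$; the two mixed cross terms, after integrating by parts on $X$ (using $\dbar u_t=0$ and that $\dbar(\bar v_t e^{-\phi})$ equals, up to sign, $\overline{\dt v_t}\,e^{-\phi}$), combine to $-2\|\dt v_t\|^2$; and the $v_t\wedge\bar v_t$ term against $i\ddbar_X\phi$ yields $\int_X c_n\,i\ddbar_X\phi\wedge v_t\wedge\bar v_t\,e^{-\phi}=\langle[i\ddbar_X\phi,\Lambda]\alpha,\alpha\rangle$ with $\alpha=v_t\wedge\omega$. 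Adding $\int_X\|\dbar v_t\|^2e^{-\phi}$ and invoking the Nakano identity together with the computation $\partial^*\alpha=-i\,\dbar v_t$ — which reads $\|\dt v_t\|^2=\|\dbar v_t\|^2+\langle[i\ddbar_X\phi,\Lambda]\alpha,\alpha\rangle$ — the four contributions collapse to $\int_X\phi_{t\bar t}|u_t|^2e^{-\phi}-\|\dt v_t\|^2$, exactly the compact formula.

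The main obstacle is this last repackaging: the integration-by-parts bookkeeping that turns the mixed terms into $-2\|\dt v_t\|^2$, and the correct sign in $\partial^*(v_t\wedge\omega)=-i\,\dbar v_t$. The latter uses the K\"ahler identity relating $\partial^*$ to $[\Lambda,\dbar]$ and the Lefschetz relations $\Lambda(v_t\wedge\omega)=v_t$ and $\Lambda(\dbar v_t\wedge\omega)=0$; the second of these holds precisely because of the extra condition $\dbar v_t\wedge\omega=0$ imposed on $v_t$, i.e. the primitivity of $\dbar v_t$, and this is exactly what forces $\|\dbar v_t\|^2$ to enter with a positive sign. The remaining points — differentiation under the integral and the solvability of (3.1) with a $t$--uniform $L^2$ estimate — are routine in the present smooth setting, guaranteed respectively by smoothness and by the closed range statement of Remark 2 (using $H^{n,1}(X,L)=0$).
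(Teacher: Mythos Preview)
The paper does not actually prove this theorem: it is quoted from the author's earlier work, with the line ``we obtain the following formula for the curvature of $E$, see \cite{1Berndtsson}, \cite{3Berndtsson}'' immediately preceding the statement, and no argument is given here. So there is no in-paper proof to compare against.

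That said, your argument is correct and is essentially the standard derivation one finds in those references. The two-step structure---first obtaining the ``compact'' expression
\[
\langle\Theta u_t,u_t\rangle\big/ (i\,dt\wedge d\bar t)=\int_X\phi_{t\bar t}\,|u_t|^2e^{-\phi}-\|\partial^{\phi_t}v_t\|^2
\]
from the abstract curvature identity $\|D'_tu_t\|^2-\partial_t\partial_{\bar t}\|u_t\|^2$ together with $D'_tu_t=P(\partial_tu_t-\dot\phi\,u_t)$ and $\pi_\perp(\dot\phi\,u_t)=\partial^{\phi_t}v_t$, and then repackaging via the Bochner--Kodaira--Nakano identity on $(n,1)$-forms---is exactly how Berndtsson proceeds in \cite{3Berndtsson}. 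Your identification $\partial^*\alpha=-i\,\dbar v_t$ for $\alpha=v_t\wedge\omega$, valid precisely because of the primitivity condition $\dbar v_t\wedge\omega=0$, is the key point that makes the second term appear with the good sign, and you have isolated it correctly. The sign bookkeeping in the mixed cross terms that you flag as the ``main obstacle'' does work out: after integrating $\dbar_X\dot\phi\wedge u_t$ by parts against $\bar v_t\,e^{-\phi}$ one lands on $\langle\dot\phi\,u_t,\partial^{\phi_t}v_t\rangle=\|\partial^{\phi_t}v_t\|^2$ (using that $\partial^{\phi_t}v_t=\pi_\perp(\dot\phi\,u_t)$), and the conjugate term contributes the same, yielding $-2\|\partial^{\phi_t}v_t\|^2$ as claimed.
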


\begin{preremark} This formula shows that the curvature is nonnegative if $i\ddbar_{t,x}\phi\geq 0$. When $L=-K_X$ this implies immediately Theorem 1.1, for smooth curves, and the general case follows by the approximation techniques in the next section. The formula
can be found at the end of section 2.1 in \cite{3Berndtsson}. The proof there is a bit complicated since it  deals with the case of a general smooth proper fibration. In the present case, the proof follows from a computation of  
$$
\ddbar_t \,p_*(\hat u\wedge\overline{\hat u}).
$$
At least when $-K_X>0$, Theorem 1.1 can also be proved by differentiating $\F(t)$ and applying H\"ormander's $L^2$-estimate for $\dbar$. There are some difficulties in adapting this method of proof to the case when $-K_X$ is merely semipositive. However, the main advantage of using formula (3.2) instead is that it is useful when studying when equality holds in the inequality $\F''(t)\geq 0$, which we shall do next.
\end{preremark}

\bigskip

If the curvature acting on $u_t$ vanishes it follows that both terms
in the right hand side of (3.2) vanish. In particular, $v_t$ must be a
holomorphic form. To continue from there we first assume (like in
\cite{2Berndtsson})  that
$i\ddbar\phi_t>0$ on $X$. Taking $\dbar$ of formula 3.1 for $t$ fixed, we get 
$$
\dbar\dt v_t=\dbar\dof\wedge u_t.
$$
Using
$$
\dbar\dt +\dt\dbar=\ddbar\phi_t
$$
we get if $v_t$ is holomorphic that
$$
\ddbar\phi_t\wedge v_t=\dbar\dof\wedge u_t.
$$
The complex gradient of the function $i\dof$ with respect to the K\"ahler
metric $i\ddbar\phi_t$ is the $(1,0)$-vector
field defined by
$$
V_t\rfloor i\ddbar\phi_t=i\dbar\dof.
$$
Since $\ddbar\phi_t\wedge u_t=0$ for bidegree reasons we get
\be
\ddbar\phi_t\wedge v_t=\dbar\dof\wedge
u=(V_t\rfloor\ddbar\phi_t)\wedge u=
-\ddbar\phi_t\wedge (V_t\rfloor u).
\ee
If $i\ddbar\phi_t>0$ we find that
$$
-v_t=V_t\rfloor u.
$$
If $v_t$ is holomorphic it follows that  $V_t$ is a holomorphic vector
field - outside of the zerodivisor of $u_t$ and therefore everywhere
since the complex gradient is smooth under our hypotheses. If we
assume that $X$ carries no nontrivial holomorphic vector fields, $V_t$
and hence $v_t$ must vanish so $\dof$ is holomorphic, hence constant. 
Hence
$$
\ddbar\dof=0
$$
so $\ddbar\phi_t$ is independent of $t$. 
In general - if there are nontrivial holomorphic vector fields - we
get that the Lie derivative of $\ddbar\phi_t$ equals
$$
L_{V_t}\ddbar\phi_t=\partial
V_t\rfloor\ddbar\phi_t=\ddbar\dof=\frac{\partial}{\partial
  t}\ddbar\phi_t .
$$
Together with an additional argument showing that $V_t$ must be
holomorphic with respect to $t$ as well (see below) this gives that
$\ddbar\phi_t$ moves with the flow of the holomorphic vector field
which is what we want to prove.

For this  it is essential that the metrics $\phi_t$ be
strictly positive on $X$ for $t$ fixed, but we shall now see that
there is a way to get around this difficulty, at least in some special
cases.

The main case that we will consider is when the canonical bundle
of $X$ is seminegative, so we can take $L=-K_X$. Then $K_X+L$ is the
trivial bundle and we fix  a nonvanishing trivializing section
$u=1$. Then the constant section $t\rightarrow u_t=u$ is  a trivializing
section of the (line) bundle $E$. We write
$$
\F(t)=-\log \|u\|_t^2=-\log\int_X |u|^2 e^{-\phi_t}=-\log\int_X e^{-\phi_t}.
$$
Still assuming that $\phi$ is smooth, but perhaps not strictly
positive on $X$, we can apply  the curvature formula in Theorem 3.1
with $u_t=u$
and get
$$
\|u_t\|^2_t i\ddbar_t\F=\langle\Theta u_t,u_t\rangle_t= p_*(c_n
i\ddbar\phi \wedge \hat 
u\wedge\overline{\hat u} e^{-\phi_t})
+\int_X \|\dbar v_t\|^2 e^{-\phi_t}idt\wedge d\bar t.
$$
If $\F$ is harmonic, the curvature vanishes and it follows that $v_t$
is holomorphic on $X$ for any $t$ fixed. Since $u$ never vanishes we
can {\it define} a holomorphic vector field $V_t$ by
$$
-v_t=V_t\rfloor u.
$$
Almost as before we get 
$$
\dbar\dof\wedge u=\ddbar\phi_t\wedge v_t=-\ddbar\phi_t\wedge (V_t\rfloor u)
=(V_t\rfloor\ddbar\phi_t)\wedge u,
$$
which implies that
$$
V_t\rfloor i\ddbar\phi_t=i\dbar\dof.
$$
if  $\mathbf{u}$ never vanishes. This is the important point; we have been able
to trade the nonvanishing of $i\ddbar\phi_t$ for the nonvanishing of
$u$. This is where we use that the line bundle we are dealing with is
$L=-K_X$ (see section 9 for partial results for other line bundles).

We also get 
the formula for  the Lie derivative of
$\ddbar\phi_t$ along $V_t$
\be
L_{V_t}\ddbar\phi_t=\partial
V_t\rfloor\ddbar\phi_t=\ddbar\dof=\frac{\partial}{\partial
  t}\ddbar\phi_t .
\ee
To be able to conclude from here we also need to prove that $V_t$
depends holomorphically on $t$. For this we will use the first term in
the curvature formula, which also has to vanish. It follows that 
$$
i\ddbar\phi \wedge \hat
u\wedge\overline{\hat u}
$$
has to vanish identically. Since this is a semidefinite form in $\hat
u$ it follows that
\be
\ddbar\phi \wedge \hat u=0.
\ee

Considering the part of this expression that contains $dt\wedge d\bar
t$ we see that
\be
\mu:=\frac{\partial^2\phi}{\partial t\partial \bar
  t}-\partial_X(\frac{\partial\phi}{\partial\bar t})(V_t)=0.
\ee

\bigskip

If $\ddbar_X\phi_t>0$, $\mu$ is easily seen to be equal to the
function $c(\phi)$ defined in the introduction, so the vanishing of
$\mu$ is then equivalent to the homogenous Monge-Amp\`ere equation.
In \cite{2Berndtsson} we showed that $\partial V_t/\partial \bar t=0$
by realizing this vector field as the complex gradient of the function
$c(\phi)$ which has to vanish if the curvature is zero. Here, where we
no longer assume strict postivity of $\phi_t$ along $X$ we have the
same problems as earlier to define the complex gradient. Therefore we
follow the same route as before, and start by studying  $\partial
v_t/\partial \bar t$ instead.

\bigskip

Recall that
$$
\dt v_t=\dof\wedge u +h_t
$$
where $h_t$ is holomorphic on $X$ for each $t$ fixed. As we have seen
in the beginning of this section, $v_t$ is uniquely determined, and it
is not hard to see that it depends smoothly on $t$ if $\phi$ is
smooth. Differentiating
with respect to $\bar t$ we obtain
$$
\dt\frac{\partial v_t}{\partial \bar t}=\left [\frac{\partial^2\phi}{\partial
  t\partial \bar t}-\partial_X(\frac{\partial\phi}{\partial\bar
  t})(V_t)\right ]\wedge u +\frac{\partial h_t}{\partial\bar t}.
$$
Since the left hand side is automatically orthogonal to holomorphic
forms, we get that
$$
\dt\frac{\partial v_t}{\partial \bar t}=\pi_\perp(\mu u)=0,
$$
since $\mu=0$ by (3.6). Again, this means that $\partial
v_t/\partial \bar t =0$ since $\partial v_t/\partial \bar
t\wedge\omega$ is still $\dbar_X$-closed, and 
the cohomological assumption implies
that $\dt$ is injective on $(n-1,0)$-forms $\gamma$ such that $\gamma\wedge\omega$ is $\dbar$-closed.  

All in all, $v_t$ is holomorphic in $t$, so $V_t$ is holomorphic on
$X\times\Omega$. Let $F_t$ be the flow of the time dependent holomorphic vector field $-V_t$, so that for any function $\psi$ on $X$
$$
\frac{\partial}{\partial t} \psi(F_t(z))=-V_t(\psi)(F_t(z)).
$$
Then we also have for any form $\eta$ on $X$ that
$$
\frac{\partial}{\partial t}F_t^*(\eta)=-F_t^*(L_{V_t}\eta).
$$
Applying this to $\eta=i\ddbar_X\phi_t$ we get
$$
\frac{\partial}{\partial t}F_t^*(i\ddbar\phi_t)=F_t^*(\frac{\partial}{\partial t}i\ddbar\phi_t-L_{V_t}i\ddbar\phi_t)=0
$$
by (3.4). Since $\eta$ is real form, we can take real and imaginary parts of this, so $F_t^*(\ddbar\phi_t)=\ddbar\phi_0$ which completes the proof.

\section{ The nonsmooth case}

Our strategy to treat the general case is to  write our bounded curve of metrics  $\phi=\phi_t$ as
the decreasing limit of a  
sequence of smooth  metrics, $\phi^\nu$, with $i\ddbar\phi^\nu\geq
-\epsilon_\nu\omega$, where $\epsilon_\nu$ tends to zero, see section
2.3. Then we can apply Theorem 3.1 for the metrics $\phi^\nu$ and study the limit as $\nu$ tends to infinity. Note also 
that in case we assume that $-K_X>0$ we can even 
approximate with metrics of strictly positive curvature. The presence
of the negative term $-\epsilon_\nu \omega$ causes some minor
notational  problems
in the estimates below. We will therefore carry out the proof under
the assumptions that  $i\ddbar\phi^\nu\geq 0$ and leave the necessary
modifications to the reader. Throughout in this section we assume that $\phi_t$ depends only on the real part of $t$. Thus we also assume that $\Omega= I\times {i\R}$ is a strip, and for ease of notation we assume that $0$ lies in the interval $I$, so that zero is an interior point of $\Omega$. 

\bigskip

\noindent Let $\F_\nu$ be
defined the same way as $\F$, but using the weights $\phi^\nu$
instead. Then 
$$
i\ddbar\F_\nu
$$
goes to zero weakly on $\Omega$. We  get a sequence of $(n-1,0)$
forms $v^\nu_t$, solving
$$
\dt v^\nu_t=\pi_\perp(\dot{\phi_t^\nu} u)
$$
for $\phi=\phi_\nu$. By Remark 1, we have an $L^2$-estimate for
$v^\nu_t$ in terms of the $L^2$ norm of $\dofnu$, with the constant in
the estimate independent of $t$ and $\nu$. Since $\dot{\phi_t^\nu}$ is
uniformly bounded by section 2.2, it follows that we get a
uniform bound for the $L^2$-norms of $v_t^\nu$ over all of
$X\times\Omega$. Therefore we can select a 
subsequence of $v_t^\nu$ that converges weakly to a form $v$ in
$L^2$. Since $i\ddbar\F_\nu$ tends to zero weakly, Theorem 3.1 shows
that the $L^2$-norm of $\dbar_X v^\nu$ over $X\times K$ goes to zero
for any compact $K$ in $\Omega$, so $\dbar_X v=0$. Moreover, we claim that
$$
\dt_X v= \pi_\perp (\dot{\phi}_tu)
$$
in the (weak ) sense that 
\be
\int_{X\times\Omega}dt\wedge d\bar t\wedge v\wedge\overline{\dbar W} e^{-\phi}=
(-1)^n\int_{X\times\Omega}dt\wedge d\bar t\wedge
\pi_\perp(\dot{\phi}_t u)\wedge\overline{ W }e^{-\phi} 
\ee
for any smooth form $W$ of the appropriate degree.

To see this, note first that
$$
 \int_{X\times\Omega}dt\wedge d\bar t\wedge v^\nu\wedge\overline{\dbar W} e^{-\phi^\nu}=\lim (-1)^n\int_{X\times\Omega}dt\wedge d\bar t\wedge
\pi_\perp(\dot{\phi^\nu}_tu)\wedge\overline{ W }e^{-\phi^\nu} .
$$
In the left hand side we then use that (a subsequence of) $v^\nu$ converges weakly in $L^2$ (since the metrics $\phi^\nu$ are bounded we don't need to worry about which $L^2$) to $v$. By dominated convergence we also have that $\dbar W e^{-\phi^\nu}$ converges strongly to $\dbar W e^{-\phi}$. Combining these two facts we see that the left hand side converges to 
$$
\int_{X\times\Omega}dt\wedge d\bar t\wedge v\wedge\overline{\dbar W} e^{-\phi}.
$$
As for the right hand side we decompose 
$$
\pi_\perp(\dot{\phi^\nu}_t u)=\dot{\phi^\nu}_t u+h^\nu
$$
where $h^\nu$ is holomorphic and both terms are bounded in $L^2$. We can then take limits in the same way and find that the right hand side tends to
$$
\int_{X\times\Omega}dt\wedge d\bar t\wedge
(\dot{\phi}_tu+h)\wedge\overline{ W }e^{-\phi}. 
$$
A similar argument then shows that $\dot{\phi}u+h$ is orthogonal to holomorphic forms and so must equal $\pi_\perp(\dot{\phi}u) $ which completes the proof of (4.1). 

\bigskip

Formula (4.1) says that in the sense of distributions
$$
\partial_X (ve^{-\phi})=\pi_\perp(\dot{\phi}_tu) e^{-\phi}
$$
(in a local trivialization). We next claim that this means that
$$
\partial_X v -\partial_X\phi\wedge v=\pi_\perp(\dot{\phi}_tu).
$$
This  is because in the sense of distributions
$$
\partial_X(v e^{-\phi})=\lim \partial_X(v e^{-\phi^\nu})=\lim (\partial_X v-\partial_X\phi^\nu\wedge v)e^{-\phi^\nu},
$$
which equals 
$$
 (\partial_X v-\partial_X\phi\wedge v)e^{-\phi}
$$
by essentially the same argument as before. 

\bigskip

We can now take $\dbar_X$ of this equation and find that
\be
\ddbar_X\phi\wedge v=\dbar_X\dot{\phi}_t\wedge u.
\ee
Just as in the previous section we then define a $t$ dependent vector field on $X$ by
$$
V\rfloor u=v.
$$
Since $\dbar_X v=0$, $V$ is holomorphic on $X$ for $t$ fixed, and satisfies as before that
$$
V\rfloor\ddbar_X\phi=\dbar_X\dot{\phi}.
$$

\bigskip

As before this ends the argument if there are no nontrivial holomorphic vector
fields on $X$. Then $v$ must be zero, so $\dof$ is holomorphic, hence
constant. In the general case, we finish by showing that $v_t$ is
holomorphic in $t$. The difficulty is that we don't know  any
regularity of $v_t$ with respect to $t$, 
except that it lies in $L^2$, so we need to formulate 
holomorphicity weakly.

We will use two elementary lemmas that we state
without proof. The first one allows us get good convergence
properties for geodesics, when the metrics only depend on the real
part of $t$ and therefore are convex with respect to $t$.

\begin{lma}
Let $f_\nu$ be a sequence of smooth convex functions on an interval in $\R$
that decrease to the convex function $f$. Let $a$ be a point
in the interval such that $f'(a)$ exists. Then $f_\nu'(a)$ converge
to $f'(a)$. Since a convex function is differentiable almost
everywhere it follows that $f'_\nu$ converges to $f'$ almost
everywhere, with dominated convergence on any compact subinterval.
\end{lma}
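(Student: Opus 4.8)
The plan is to prove the pointwise statement $f_\nu'(a)\to f'(a)$ by a two-sided squeeze coming from the monotonicity of difference quotients of convex functions, and then to deduce the almost-everywhere convergence and the dominated-convergence bound from the a.e.\ differentiability of convex functions together with a uniform Lipschitz estimate. The whole argument is elementary; the only step requiring a little care is the uniform Lipschitz bound needed at the end.

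First I would record the standard consequence of convexity: for any convex $g$ that is differentiable at $a$ and any $h>0$ with $a\pm h$ in the interval,
$$
\frac{g(a)-g(a-h)}{h}\leq g'(a)\leq \frac{g(a+h)-g(a)}{h},
$$
since $g$ lies above its tangent at $a$. Applying this to each $f_\nu$ gives
$$
\frac{f_\nu(a)-f_\nu(a-h)}{h}\leq f_\nu'(a)\leq \frac{f_\nu(a+h)-f_\nu(a)}{h}.
$$
Because $f_\nu\to f$ pointwise, I would first let $\nu\to\infty$ with $h$ fixed, obtaining
$$
\frac{f(a)-f(a-h)}{h}\leq \liminf_\nu f_\nu'(a)\leq \limsup_\nu f_\nu'(a)\leq \frac{f(a+h)-f(a)}{h},
$$
and then let $h\to 0^+$, using that $f'(a)$ exists so that both outer quotients tend to $f'(a)$. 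This squeezes the $\liminf$ and $\limsup$ to the common value $f'(a)$, proving $f_\nu'(a)\to f'(a)$. The point that makes the double limit harmless is that only pointwise convergence of $f_\nu$ is needed for the inner limit, while the outer limit uses only differentiability of $f$ at $a$; convexity keeps the difference quotients monotone in $h$, so no uniformity in $\nu$ is required here. Since a convex $f$ is differentiable at almost every point, this immediately gives $f_\nu'\to f'$ a.e.

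For the dominated-convergence claim on a compact subinterval $[c,d]$, I would fix a slightly larger compact interval $[c',d']$ with $c'<c<d<d'$ and use that, as the $f_\nu$ decrease to $f$, one has $f\leq f_\nu\leq f_1$ there; both bounds are continuous on $[c',d']$, hence lie between finite constants $m$ and $M$. The usual chord estimate for a convex function then bounds the one-sided derivatives of each $f_\nu$ on $[c,d]$ by $(M-m)$ divided by the distance from $[c,d]$ to the endpoints $c',d'$, i.e.\ a Lipschitz constant independent of $\nu$. This uniform bound on $|f_\nu'|$ over $[c,d]$ together with the a.e.\ convergence lets dominated convergence apply, which is exactly what is needed later. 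As noted, establishing this uniform Lipschitz bound is the only part of the argument that is not a one-line consequence of convexity, and even it is routine.
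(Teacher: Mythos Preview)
Your argument is correct. The squeeze via one-sided difference quotients gives $f_\nu'(a)\to f'(a)$ exactly as you wrote, the a.e.\ differentiability of convex $f$ yields a.e.\ convergence, and the uniform two-sided bound $f\le f_\nu\le f_1$ on a slightly larger interval produces a uniform Lipschitz constant on $[c,d]$, which is the dominating function.

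There is nothing to compare against in the paper: the lemma is explicitly ``stated without proof'' as elementary. Your proof is the standard one and matches what the author evidently had in mind. One small cosmetic remark: you only use that $f_\nu\to f$ pointwise for the first part; the monotone decrease is used only for the uniform bound $f\le f_\nu\le f_1$ in the dominated-convergence step (and the continuity of the limit $f$, which follows from its convexity on the open interval). You might note this explicitly, since it clarifies exactly where the hypothesis of a \emph{decreasing} sequence enters.
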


In particular the lemma can be applied to a decreasing sequence $\phi^\nu_t$ of subgeodesics that are independent of $\Im t$ and decrease to a geodesic $\phi_t$. For any fixed $x$ in $X$ it follows that $\dot{\phi}^\nu_t(x)$ converges to $\dot{\phi}_t(x)$ for almost all $t$, so it follows that this holds almost everywhere on $\Omega\times X$. By section 2.3  we also have a fixed bound on the $t$-derivative of $\phi_t^\nu$, so  we even  have dominated convergence. 

Another technical problem that arises is that we are dealing with
certain orthogonal projections on the manifold $X$, where the weight
depends on $t$. The next lemma gives us control of how these
projections change.
\begin{lma} Let $\alpha_t$ be forms on $X$ with coefficients depending
  on $t$ in $\Omega$. Assume that $\alpha_t$ is Lipschitz with respect to $t$ as
  a map from $\Omega$ to $L^2(X)$. Let $\pi^t$ be the orthogonal
  projection on $\dbar$-closed forms with respect to the metric $\phi_t$ and the
  fixed K\"ahler metric $\omega$. Then $\pi^t(\alpha_t)$ is also
  Lipschitz, with a Lipschitz constant depending only on that of
  $\alpha$ and the Lipschitz constant of $\phi_t$ with respect to $t$.
\end{lma}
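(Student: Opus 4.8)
The plan is to exploit the variational characterisation of the orthogonal projection together with the boundedness and Lipschitz control of the weights $\phi_t$. Write $\langle\cdot,\cdot\rangle_t$ for the $L^2$ inner product determined by $\phi_t$ and the fixed K\"ahler metric $\omega$, and let $H\subset L^2(X)$ be the subspace of $\dbar$-closed forms. The crucial structural point is that $H$ is the \emph{same} closed subspace for every $t$, since $\dbar$-closedness is a metric independent condition; only the inner product varies with $t$. Since $|\phi_t-\psi|\leq C$, all the norms $\|\cdot\|_t$ are uniformly equivalent to a fixed reference norm $\|\cdot\|$, so the projections $\pi^t$ are uniformly bounded; and since $\phi_t$ is Lipschitz in $t$, so is $e^{-\phi_t}$, which yields the key estimate
$$
|\langle a,b\rangle_t-\langle a,b\rangle_s|\leq C|t-s|\,\|a\|\,\|b\|,
$$
with $C$ depending only on the bounds and the Lipschitz constant of $\phi_t$.

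First I would treat a \emph{fixed} form $\alpha$ and show that $t\mapsto\pi^t\alpha$ is Lipschitz. Put $\beta_t=\pi^t\alpha$, so that $\beta_t\in H$ and $\langle\alpha-\beta_t,\gamma\rangle_t=0$ for all $\gamma\in H$. Taking $\gamma=\beta_t-\beta_s\in H$ in the orthogonality relations at both $t$ and $s$, writing $\beta_t-\beta_s=(\alpha-\beta_s)-(\alpha-\beta_t)$ and using the relation at $t$ to kill the $(\alpha-\beta_t)$ contribution, then subtracting the (vanishing) relation at $s$, one is left with
$$
\|\beta_t-\beta_s\|_t^2=\langle\alpha-\beta_s,\beta_t-\beta_s\rangle_t-\langle\alpha-\beta_s,\beta_t-\beta_s\rangle_s.
$$
The right-hand side is the difference of one bilinear form evaluated in the two metrics, so by the displayed estimate it is bounded by $C|t-s|\,\|\alpha-\beta_s\|\,\|\beta_t-\beta_s\|\leq C'|t-s|\,\|\alpha\|\,\|\beta_t-\beta_s\|$. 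Dividing by $\|\beta_t-\beta_s\|$ gives $\|\beta_t-\beta_s\|\leq C''|t-s|\,\|\alpha\|$, the desired Lipschitz bound for fixed $\alpha$.

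Finally I would incorporate the $t$-dependence of $\alpha_t$ through the splitting
$$
\pi^t\alpha_t-\pi^s\alpha_s=(\pi^t\alpha_t-\pi^s\alpha_t)+\pi^s(\alpha_t-\alpha_s).
$$
The first term is controlled by the previous step applied to the fixed form $\alpha_t$, and the second by the uniform boundedness of $\pi^s$ together with the assumed Lipschitz continuity of $t\mapsto\alpha_t$; adding the two estimates gives the claim, with a constant depending only on the Lipschitz constants of $\alpha$ and of $\phi_t$. I expect the only genuine obstacle to be bookkeeping rather than conceptual: one must ensure that the constant $C$ in the bilinear Lipschitz estimate is truly uniform in $t$ and $s$, which is exactly where the hypothesis $|\phi_t-\psi|\leq C$ (and the resulting uniform equivalence of all the $L^2$ norms) enters. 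The orthogonality manipulation in the second paragraph is what makes the $t$-dependence of the \emph{operator} $\pi^t$ tractable, since it lets us bypass any attempt to differentiate $\pi^t$ directly, about which we have no information.
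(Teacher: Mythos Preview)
Your argument is correct. The paper explicitly states this lemma without proof (``two elementary lemmas that we state without proof''), so there is no original proof to compare against; your approach is exactly the sort of elementary argument the author presumably had in mind.

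The key structural observation---that the subspace $H$ of $\dbar$-closed forms is independent of $t$ while only the inner product varies---is the right one, and your orthogonality manipulation
\[
\|\beta_t-\beta_s\|_t^2=\langle\alpha-\beta_s,\beta_t-\beta_s\rangle_t-\langle\alpha-\beta_s,\beta_t-\beta_s\rangle_s
\]
is clean and correct. The subsequent splitting $\pi^t\alpha_t-\pi^s\alpha_s=(\pi^t-\pi^s)\alpha_t+\pi^s(\alpha_t-\alpha_s)$ is standard.

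One small remark: your final Lipschitz constant depends not only on the Lipschitz constants of $\alpha_t$ and $\phi_t$, but also on $\sup_t\|\alpha_t\|$ (from the bound $\|\beta_t-\beta_s\|\leq C''|t-s|\,\|\alpha_t\|$) and on the uniform bound $|\phi_t-\psi|\leq C$ (needed for the equivalence of norms and for the bilinear Lipschitz estimate). The lemma as stated in the paper suppresses these dependencies, but they are harmless in the application: the forms $\alpha$ to which the lemma is applied in section~4 are bounded in $L^2$, and the uniform bound on $\phi_t$ is a standing hypothesis. So this is not a gap, just a point where the statement is slightly informal.
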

Note that in our  case, when $\phi$ is independent of the imaginary
part of $t$, we 
have control of the Lipschitz constant with respect to $t$ of $\phi_t$ , and also
by the first 
lemma uniform control of the Lipschitz constant of $\phi^\nu_t$, since
the derivatives are increasing.

\bigskip

It follows from the curvature formula that 
$$
a_\nu:=\int_{X\times\Omega'}i\ddbar\phi^\nu\wedge\hat u\wedge\overline{\hat
  u}e^{-\phi^\nu}
$$
goes to zero if $\Omega'$ is a relatively compact subdomain of
$\Omega$. Shrinking $\Omega$ slightly we assume that this actually
holds with $\Omega'=\Omega$. By the Cauchy inequality
$$
\int_{X\times\Omega}i\ddbar\phi^\nu\wedge\hat
u\wedge\overline{W}e^{-\phi^\nu}\leq
(a_\nu\int_{X\times\Omega}i\ddbar\phi^\nu\wedge W\wedge\bar W e^{-\phi^\nu})^{1/2}
$$
if $W$ is any $(n,0)$-form. Choose $W$ to contain no
differential $dt$, so that it is an $(n,0)$-form on $X$ with
coefficients depending on $t$. Then
$$
\int_{X\times\Omega}i\ddbar\phi^\nu\wedge W\wedge\bar W e^{-\phi^\nu}=
\int_{X\times\Omega}i\ddbar_t\phi^\nu\wedge
W\wedge\bar W e^{-\phi^\nu}
$$
We now assume that $W$ has compact support. We will then use the one variable H\"ormander inequality, which says that if $w$ is a function of $t$ with compact support in $\Omega$ and one derivative in $L^2$,  and $\psi$ is a smooth function in $\Omega$ , then
$$
\int_\Omega i\ddbar \psi |w|^2 e^{-\psi}\leq \int_\Omega |\partial^\psi w|^2 e^{-\psi}.
$$
(This is the dual version of H\"ormander's $L^2$-estimate and can be found in \cite{Hormander}.)
We apply this inequality to $w=W$ and $\psi=\phi^\nu$, where we consider $W$ and $\phi^\nu$ as  functions of $t$ by holding the $X$-variable fixed.
The one variable
H\"ormander inequality with respect to $t$ then shows that
\be
\int_{X\times\Omega}i\ddbar_t\phi^\nu\wedge
W\wedge\bar W e^{-\phi^\nu}\leq\int_{X\times\Omega}|\partial^{\phi^\nu}_t W|^2e^{-\phi^\nu}.
\ee
From now we assume that $W$ is Lipschitz with respect to $t$ as a map from
$\Omega$ into $L^2(X)$. Then (4.3) is uniformly bounded, so

$$
\int_{X\times\Omega}idt\wedge d\bar
t\wedge(\mu^\nu u)\wedge\overline{W}e^{-\phi^\nu} 
$$
goes to zero, where $\mu^\nu$ is defined as in (3.6) with $\phi$ replaced by
$\phi^\nu$. By Lemma 4.2
$$
\int_{X\times\Omega}idt\wedge d\bar
t\wedge(\mu^\nu u)\wedge\overline{\pi_\perp W}e^{-\phi^\nu} 
$$
also goes to zero. Therefore
$$
\int_{X\times\Omega}idt\wedge d\bar
t\wedge\pi_\perp(\mu^\nu u)\wedge\overline{W}e^{-\phi^\nu}. 
$$
goes to zero. Now recall that
$\pi_\perp(\mu^\nu u)=\partial^{\phi_\nu} (\partial v_t^\nu/\partial\bar t)$ and integrate
by parts. This gives that
$$
\int_{X\times\Omega}idt\wedge d\bar
t\wedge \frac{\partial v_t^\nu}{\partial \bar
  t}\wedge\overline{\dbar_XW}e^{-\phi^\nu}  
$$ 
also vanishes as $\nu$ tends to infinity. 

\bigskip

Next we let $\alpha$ be a form of bidegree $(n,1)$ on $X\times\Omega$
that does not contain any differential $dt$. We assume it is Lipschitz
with respect to $t$ and decompose it into one part, $\dbar_X W$,  which is
$\dbar_X$-exact and 
one which is orthogonal to $\dbar_X$-exact forms. This amounts of
course to making this orthogonal decomposition for each $t$
separately, and by Lemma 4.2 each term in the decomposition is still
Lipschitz in $t$, uniformly in $\nu$. Since $v^\nu_t\wedge\omega$ is
$\dbar_X$-closed 
by construction, this holds also for $\partial v^\nu/\partial \bar
t$. By our cohomological assumption, it is also $\dbar$-exact,
and we get that
$$
\int_{X\times\Omega}idt\wedge d\bar
t\wedge \frac{\partial v_t^\nu}{\partial \bar
  t}\wedge\overline\alpha e^{-\phi^\nu}=  \int_{X\times\Omega}idt\wedge d\bar
t\wedge \frac{\partial v_t^\nu}{\partial \bar
  t}\wedge\overline{\dbar_XW}e^{-\phi^\nu}.  
$$ 
Hence

$$
\int_{X\times\Omega}dt\wedge
v_t^\nu\wedge\overline{\partial^{\phi^\nu}_t\alpha}e^{-\phi^\nu}
$$
goes to zero. 
By Lemma 4.1 we may pass to the limit here and finally get 
that
\be
\int_{X\times\Omega} dt\wedge
v_t\wedge\overline{\partial^{\phi}_t\alpha}e^{-\phi} =0,  
\ee
under the sole assumption that $\alpha$ is of compact support, and
Lipschitz in $t$. This is almost the distributional formulation of
$\dbar_t v=0$, except that $\phi$ is not smooth. But, replacing
$\alpha$ by $e^{\phi-\psi}\alpha$, where $\psi$ is another metric on
$L$, we see that  if (4.4) holds for some $\phi$, Lipschitz in $t$, it
holds for any such metric. Therefore we can replace $\phi$ in (4.4) by
some other smooth metric. It follows that $v_t$ is holomorphic in $t$
and therefore, since we already know it is holomorphic on $X$,
holomorphic on $X\times\Omega$. This completes the proof.

\subsection{Time independence of $V$}
We shall now prove that the vector fields $V_t$ are in fact independent of time $t$, i e that $V_t= (F_t)_*(V_0)$. Let $\V= \partial/\partial t-V_t$. This is a holomorphic vector field on $\Omega\times X$. 
\begin{lma}
\be
\V\rfloor \ddbar_{t,X}\phi=0.
\ee
\end{lma}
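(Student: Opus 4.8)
The plan is to verify the identity (6.4) componentwise on the product $\Omega\times X$, reducing it to the two relations already produced in the proof above. Decompose the $(1,1)$-current according to the product structure,
\[
\ddbar_{t,X}\phi=\frac{\partial^2\phi}{\partial t\partial\bar t}\,dt\wedge d\bar t+\ds_t\dbar_X\phi+\ds_X\dbar_t\phi+\ddbar_X\phi,
\]
the two middle terms being the mixed pieces of type $dt\wedge d\bar z$ and $dz\wedge d\bar t$. Since $\V=\frac{\partial}{\partial t}-V$ is a $(1,0)$-vector field whose second summand $V$ is tangent to $X$, the contraction $\V\rfloor\ddbar_{t,X}\phi$ is a $(0,1)$-form that splits into a part proportional to $d\bar t$ and a part that is a $(0,1)$-form along $X$. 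I would show that each of these vanishes separately.

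For the part along $X$, only $\ddbar_X\phi$ and $\ds_t\dbar_X\phi$ contribute a $d\bar z_j$. The vector field $V$ meets only $\ddbar_X\phi$, and by the spatial relation (6.3) one gets $-V\rfloor\ddbar_X\phi=-\dbar_X\dof$. On the other hand $\frac{\partial}{\partial t}$ contracts $\ds_t\dbar_X\phi$ to exactly $\dbar_X\dof$. The two cancel, so this part is zero.

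For the part proportional to $d\bar t$, the relevant terms are $\frac{\partial^2\phi}{\partial t\partial\bar t}\,dt\wedge d\bar t$ and $\ds_X\dbar_t\phi$. Contracting, $\frac{\partial}{\partial t}$ produces $\frac{\partial^2\phi}{\partial t\partial\bar t}\,d\bar t$ while $-V$ produces $-(\ds_X\tfrac{\partial\phi}{\partial\bar t})(V)\,d\bar t$; their sum is $\mu\,d\bar t$, with $\mu$ the quantity of (3.6). This part therefore vanishes precisely because $\mu=0$, a fact established exactly as in the earlier sections from the vanishing of the first term in the curvature formula, i.e.\ from $\F$ being affine. Combining the two computations gives $\V\rfloor\ddbar_{t,X}\phi=0$.

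The main obstacle is not the algebra, which is routine, but making sure everything is legitimate for the singular weight $\phi$: the identities (6.3) and $\mu=0$, and the contractions themselves, must be read in the sense of currents and must persist across the closed pluripolar set on which $\psi=-\infty$. This is already guaranteed by the work done earlier in the section, where the defining relation for $v$ was first established off the singular set, shown to lie in $L^2$, and then extended over the pluripolar set by removability of such sets for $L^2$-holomorphic data; the componentwise cancellation above is linear and so survives this passage to the limit.
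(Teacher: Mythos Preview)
Your componentwise computation is algebraically correct, but the approach is genuinely different from the paper's, and there is a gap in the justification of the $d\bar t$-part.

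The paper does \emph{not} argue componentwise. Instead it uses what has already been proved in Theorem~6.1, namely that there is a holomorphic flow $F_t$ with $F_t^*(\ddbar_X\phi_t)=\ddbar_X\phi_0$. From $F_t^*(e^{-\phi_t})=e^{-\phi_0+c(t)}$ and the assumption that $\F$ is affine one gets that $c(t)$ is linear, hence $\V\phi_t^j=(d/dt)\bigl(\phi_t^j(F_t(z))\bigr)=-c'$ is a constant; therefore $\V\rfloor\ddbar_{t,X}\phi=\dbar(\V\phi_t^j)=0$. This argument uses only the flow relation and avoids any appeal to the curvature formula in the limit.

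Your argument, by contrast, requires the identity $\mu=0$. That identity is established in Section~3 for \emph{smooth} $\phi$ from the vanishing of the first term in the curvature formula (3.2). In Sections~4 and~6, however, the metric is only bounded (resp.\ bounded plus a fixed singular part), the curvature formula is applied only to the smooth approximants $\phi^\nu$, and the quantities $\mu^\nu$ are used to push through weak limits that eventually yield holomorphicity of $v$ in $t$. Nowhere in those sections is $\mu=0$ proved for the limit current $\ddbar_{t,X}\phi$ itself, and your claim that it ``was established exactly as in the earlier sections'' is not accurate. The paper is explicit about this: the Remark immediately following the proof of the lemma says that $\V\rfloor\ddbar\phi=0$ is equivalent to $\ddbar\phi\wedge\hat u=0$ (i.e.\ the vanishing of the first curvature term), and that the indirect proof via the flow was chosen precisely ``to avoid having to check that the formula (3.2) holds in the limit as well.''

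So the $X$-part of your decomposition is fine (it uses only (6.3), which is proved), but the $d\bar t$-part rests on an unverified step. To close the gap you would either have to prove that $\mu=0$ survives the approximation in the sense of currents, or switch to the paper's flow argument, which sidesteps the issue entirely.
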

\begin{proof}
Recall that  $F_t$ is the flow of the time dependent holomorphic vectorfield $-V_t$, and that 
\be
F_t^*(\ddbar_X\phi_t)=\ddbar\phi_0
\ee
 (see end of section 3). Moreover, if $\psi(t,z)$ is a function then
$$
(\partial/\partial t)\psi(t, F_t(z))=\V\psi.
$$
By (4.6), the volume forms $e^{-\phi_t}$ must  satisfy
$$
F_t^*(e^{-\phi_t})=e^{-\phi_0 +c(t)},
$$
with $c(t)$ constant on $X$ for $t$ fixed. Integrating over $X$ we find that
$$
\log\int e^{-\phi_t}=c(t) +\log\int e^{-\phi_0},
$$
so $c(t)$ is by assumption a linear function. Choose local coordinates $z^j$ and take representatives of the metrics, $\phi_t^j$. Then 
$$
\V\phi^j_t(z)=(d/dt)(\phi_0(z) -c(t))=-c'.
$$
Hence 
$$
\V\rfloor\ddbar_{t,X}\phi_t=\dbar\V\phi_t^j=0.
$$
\end{proof}
\begin{preremark} The form $\hat{u}= u-dt\wedge v$ in Theorem 3.1 can be written $\hat{u}=-\V\rfloor (dt\wedge u)$. Using this one can check that the equation 
$\V\rfloor \ddbar\phi=0$ is equivalent to $\ddbar\phi\wedge\hat{u}=0$. This means precisely that the first term in the curvature formula (3.2) vanishes. We have given the indirect proof above to avoid having to check that the formula (3.2) holds in the limit as well.
\end{preremark}
\begin{lma} 
$(F_t)^*(\dof)$ is independent of $t$.
\end{lma}
\begin{proof} Since $\phi$ depends only on $\Re t$, $\dof$ is real valued, so it suffices to prove that $(F_t)^*(\dof)$ is holomorphic in $t$. But
$$
\partial/\partial \bar t (F_t)^*(\dof)=\bar\V(\dof),
$$
and since $\bar\V(\dof)$ is the coefficient of $dt$ in $\bar\V\rfloor\ddbar_{t,X}\phi$ it vanishes by the previous lemma.
\end{proof}
\begin{prop} $V_t$ is a time independent vector field, i e  $$V_t= (F_t)_*(V_0)$$. 
\end{prop}
\begin{proof} We know that 
$$
V_t\rfloor\omega_t=\dbar\dof.
$$
Pulling back by the biholomorphic map $F_t$ we get from the previous lemma (since $F_t^*(\omega_t)=\omega_0$) that
$$
 (F_{-t})_*(V_t)\rfloor\omega_0=\dbar\dot{\phi}_0.
$$
This means that 
$$
 ((F_{-t})_*(V_t)-V_0)\rfloor\omega_0=0.
$$
In case $\omega_0$ is smooth and strictly positive this implies immediately that
$(F_{-t})_*(V_t)=V_0$. In the general case we conclude by Proposition 8.2 (see section 8).
\end{proof}
  
\bigskip
\section{ The Bando-Mabuchi theorem.}
A K\"ahler metric, $\omega$,  on a Fano manifold $X$ is a K\"ahler-Einstein metric if it equals a constant multiple of its Ricci form, i e if it satisfies the equation 
\be
Ric(\omega)= a\omega,
\ee
where $a$ is a positive constant. Multiplying $\omega$ with the constant $a$ does not change the Ricci curvature so we may always assume that $a=1$, and then $\omega$ must lie in $c_1(-K_X)$. This means that $\omega=i\ddbar\phi$ for some metric $\phi$ on $-K_X$ and (5.1)  says that
$$
e^{-\phi}= a'(i\ddbar\phi)^n,
$$
for some constant $a'$ if we interpret the left hand side as a volume form as described in section 2.1. This means that $\phi$ is a critical point for the {\it Ding functional}
$$
D(\psi):=\log \int e^{-\psi}+\E(\psi,\psi_0)/\text{Vol}(-K_X),
$$
where $\psi_0$ is an arbitrary metric on $-K_X$ and $\E$ is the relative Monge-Ampere energy (see section 2.4 for definition and basic properties). 
 Thus $\phi_0$ solves the K\"ahler-Einstein
equation if and only if $(d/ds|_{s=0}D(\phi_s)=0$  for any smooth curve $\phi_s$.

Suppose now that
$\phi_0$ and $\phi_1$ are two K\"ahler-Einstein metrics. We connect them
by a bounded geodesic $\phi_t$ . Then
$\phi_t$ depends only on the real part of $t$ so $\G(t):=-D(\phi_t)$ is convex. We
claim that since
both end points are K\"ahler-Einstein metrics, 0 and 1 are stationary
points for $\G$, so $\G$ must be linear in $t$.  This would be immediate if the geodesic were smooth,
but we claim that it also holds if the geodesic is only
bounded, with boundary behaviour as described in section 2.2.  The
function $\F$ is convex, hence has onesided derivatives at the
endpoints, and using the convexity of $\phi$ with respect to $t$ one
sees that they equal
$$
\int \dof e^{-\phi}/\int e^{-\phi}
$$
(where $\dof$ now stands for the onesided derivatives). For the function
$\E(\phi_t,\psi)$ we use the inequalities (2.3) and (2.4). They show that the onesided derivatives of $\G$ at $t=0$ and $t=1$ satisfy $\G'(0)\geq 0$ and 
$\G'(1)\leq 0$. Since $\G$ is convex this is only possible if both derivatives are zero and $\G$ is constant. As moreover $\E$ is affine along the geodesic it follows that $\log\int e^{-\phi_t}$ is also affine.

Thus we can apply Theorem 1.2 and it follows that  $\ddbar\phi_t$ are related via the flow of a holomorphic vector field, so we have proved the following theorem of Bando and Mabuchi, \cite{Bando-Mabuchi}. 
\begin{thm} Let $X$ be a Fano manifold and suppose $\omega_0$ and $\omega_1$ are two solutions of the K\"ahler-Einstein equation (5.1). Then there is a holomorphic vector field on $X$ with time 1 flow $F$, such that $F^*(\omega_1)=\omega_0$.
\end{thm}
Notice that in our proof we do not need to assume that the metrics are smooth -it is enough to assume that their potentials are bounded.

\section{Two extensions of Theorem 1.2 for unbounded metrics}

One might ask if Theorem 1.2 is valid under even more general
assumptions. A minimal requirement is of course that $\F$ be finite,
or in other words that $e^{-\phi_t}$ be integrable. For all we know
Theorem 1.2 might be true in this generality, but here we
will limit ourselves to   curves of metrics that can be decomposed into one part which is bounded and an unbounded part that does not depend on $t$.  

\subsection{The case $-K_X\geq 0$}

\bigskip

\noindent
 Let $t\rightarrow \phi_t$ be a curve of singular metrics
  on $L=-K_X\geq 0$ that can be written
$$
\phi_t=\tau_t +\psi
$$
where $\psi$ is a metric on an $\R$-line bundle $S$ and $\tau_t$ is a
curve of metrics on $-(K_X+S)$ such that:

\medskip

(i) $\tau_t$ is bounded  and
only depends on $\Re t$.

\medskip

(ii) $e^{-\psi}$ is integrable, $\psi$ does not depend on $t$ and $\psi$ is locally bounded in the complement of a closed pluripolar set. 

and

\medskip

(iii) 
$i\ddbar_{t,X}(\phi_t)\geq 0$. 

\medskip

\begin{thm} Assume that $-K_X\geq 0$ and that $H^{0,1}(X)=0$. Let
  $\phi_t=\tau_t+\psi$ be a 
  curve of metrics on $-K_X$ 
  satisfying  (i)-(iii). Assume that
$$
\F(t)=-\log\int_X e^{-\phi_t}
$$
is affine. Then there is a holomorphic vector field $V$ on $X$ with
flow $F_t$ such that
$$
F_t^*(\ddbar\phi_t)=\ddbar\phi_0.
$$
\end{thm}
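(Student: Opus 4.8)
The plan is to reduce everything to the machinery of Sections 3 and 4, the only genuinely new feature being the fixed, singular factor $e^{-\psi}$ in the weight $e^{-\phi_t}=e^{-\tau_t}e^{-\psi}$. Since $\psi$ does not depend on $t$, the derivative $\dof=\dot{\tau_t}$ is still uniformly bounded (the bound coming, as in Section 2.2, from the barrier construction for the bounded part $\tau_t$), and because $\tau_t\geq -C$ the measures are uniformly dominated, $e^{-\phi_t}\leq e^{C}e^{-\psi}$, by a fixed integrable measure. First I would approximate $\phi_t$ by a \emph{decreasing} sequence of smooth subgeodesics $\phi_t^\nu\downarrow\phi_t$ with $i\ddbar_{t,X}\phi_t^\nu\geq-\epsilon_\nu\omega$ and uniformly bounded $t$-derivatives, exactly as in Sections 2.2--2.3 (e.g.\ of the type $\log(|s|^2+\nu^{-1}e^{\psi})$ for the singular part). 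Choosing the sequence decreasing serves two purposes: it gives the uniform domination $e^{-\phi_t^\nu}\leq e^{-\phi_t}\leq e^{C}e^{-\psi}$, so monotone convergence yields $\F_\nu\to\F$ and hence $i\ddbar\F_\nu\to0$ weakly on $\Omega$ (here $\F$ is harmonic, being affine); and, since $\phi_t^1\geq\phi_t^\nu$, it supplies the fixed reference weight $e^{-\phi_t^1}\leq e^{-\phi_t^\nu}$ in which to take weak limits below.

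For each $\nu$ and $t$ I would solve $\dt v_t^\nu=\pi_\perp(\dofnu\, u)$ as in Section 4. By Remark 2, the hypothesis $H^{0,1}(X)=0$, and $i\ddbar\phi^\nu\geq-\epsilon_\nu\omega$, the H\"ormander estimate bounds $\int_X|v_t^\nu|^2 e^{-\phi_t^\nu}$ by $\int_X|\dofnu|^2 e^{-\phi_t^\nu}$, uniformly in $t$ and $\nu$; the latter is uniformly finite since $\dofnu$ is uniformly bounded and $\int e^{-\phi^\nu}$ is. Because $e^{-\phi^1}\leq e^{-\phi^\nu}$ and $\phi^1$ is smooth, hence its weight is bounded on the compact $X$, this bounds $v_t^\nu$ uniformly in the fixed space $L^2(X\times\Omega,e^{-\phi^1})\simeq L^2(X\times\Omega)$, so a subsequence converges weakly to some $v$. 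The curvature formula of Theorem 3.1 together with $i\ddbar\F_\nu\to0$ forces $\int\|\dbar_X v^\nu\|^2 e^{-\phi^\nu}\to0$ over $X\times K$; since $e^{-\phi^1}\leq e^{-\phi^\nu}$ this gives $\dbar_X v^\nu\to0$ strongly in $L^2(X\times K)$, whence $\dbar_X v=0$ on all of $X$ and $v_t$ is a genuine holomorphic $(n-1,0)$-form for each $t$. The weighted identity $\dt v=\pi_\perp(\dof u)$ is then obtained by testing against forms supported in the complement of the pluripolar set $\{\psi=-\infty\}$, where by hypothesis (ii) the weight is locally bounded and the dominated-convergence arguments of Section 4 apply verbatim; the pluripolar set, having measure zero, is negligible for these $L^2$ identities.

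Since the trivializing section $u=1$ never vanishes and $v_t$ is holomorphic on $X$, I would define the field $V_t$ directly by $-v_t=V_t\rfloor u$; it is then a global holomorphic vector field on $X$, with no extension across the singular set required. Taking $\dbar_X$ of the distributional equation for $v_t$ as in Section 4 yields $V_t\rfloor i\ddbar\phi_t=i\dbar\dof$ on the locus where $\psi$ is locally bounded, and hence the Lie-derivative identity $L_{V_t}\ddbar\phi_t=\partial_t\ddbar\phi_t$ there. It remains, exactly as in the bounded case, to prove that $V_t$ depends holomorphically on $t$: using the vanishing of the first term in the curvature formula one tests $\mu^\nu$ against compactly supported, $t$-Lipschitz forms $W$, applies the one-variable H\"ormander inequality in $t$ and Lemma 4.2 (whose uniform Lipschitz control is available because $\phi_t$ depends only on $\Re t$), integrates by parts, and passes to the limit with Lemma 4.1 to conclude $\partial v_t/\partial\bar t=0$ weakly; with $H^{0,1}(X)=0$ this forces $v_t$, and therefore $V_t$, holomorphic on $X\times\Omega$. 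The flow $F_t$ of $-V_t$ then satisfies $\frac{d}{dt}F_t^*(\ddbar\phi_t)=0$, i.e.\ $F_t^*(\ddbar\phi_t)=\ddbar\phi_0$.

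The main obstacle is the limiting argument of the second paragraph. In Section 4 boundedness of the metrics was used repeatedly, both to avoid having to specify which $L^2$ space one works in and to invoke dominated convergence freely, and here those steps must be redone with the singular, $t$-independent weight $e^{-\psi}$. The two structural facts that make this possible are the uniform domination $e^{-\phi_t^\nu}\leq e^{C}e^{-\psi}$, supplied by the decreasing approximation together with the boundedness of $\tau_t$, and the local boundedness of $\psi$ off a pluripolar set from hypothesis (ii), which lets one verify all the weighted distributional identities on the complement of a negligible set; the reference weight $e^{-\phi^1}$ is what turns the uniform H\"ormander bounds into compactness in a single fixed Hilbert space.
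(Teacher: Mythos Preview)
Your overall architecture matches the paper's: approximate by smooth $\phi_t^\nu\downarrow\phi_t$, get uniform $L^2$ bounds on $v_t^\nu$, take weak limits, verify the key identities off the pluripolar set $\{\psi=-\infty\}$ using hypothesis (ii), and then rerun the Section~4 argument for holomorphicity in $t$. That part is essentially what the paper does.

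There is, however, a real gap in your second paragraph. You write that ``by Remark~2, the hypothesis $H^{0,1}(X)=0$, and $i\ddbar\phi^\nu\geq-\epsilon_\nu\omega$, the H\"ormander estimate bounds $\int_X|v_t^\nu|^2 e^{-\phi_t^\nu}$ by $\int_X|\dofnu|^2 e^{-\phi_t^\nu}$, uniformly in $t$ and $\nu$.'' But Remark~2 says that the closed-range constant is uniform \emph{when the metrics are of equivalent size}, $|\phi_t-\phi_{t_0}|\leq A$. Here the smooth $\phi_t^\nu$ decrease to an \emph{unbounded} metric, so they are not uniformly equivalent to any fixed smooth reference; the constant in the $\dbar$-estimate could a priori blow up as $\nu\to\infty$. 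Neither the curvature bound $i\ddbar\phi^\nu\geq-\epsilon_\nu\omega$ nor the vanishing of $H^{0,1}$ gives a uniform constant: almost-nonnegative curvature does not produce a Bochner--Kodaira inequality with a fixed positive lower bound, and cohomology vanishing only yields closedness of range, not control of the constant under degeneration of the weight. Your later trick of comparing with $e^{-\phi^1}$ is fine for extracting compactness \emph{after} you have the bound, but it does not manufacture the bound itself.

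This is precisely the point the paper isolates as ``the main thing to be checked,'' and it is handled by a separate lemma (Lemma~6.3): if $\xi\leq\xi_0$ with $\xi_0$ smooth, $i\ddbar\xi\geq0$, and $I:=\int_X e^{\xi_0-\xi}<\infty$, then one can solve $\dbar u=f$ with $\int|u|^2e^{-\xi}\leq A\int|f|^2e^{-\xi}$ where $A$ depends only on $I$ and $\xi_0$, \emph{not} on $\xi$. The proof is a local-to-global patching: solve globally with the fixed smooth weight $\xi_0$, solve locally on coordinate balls with the singular weight $\xi$, observe that the difference is holomorphic and hence obeys sup estimates on half-balls, and integrate against $e^{-\xi}$ using $I<\infty$. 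Applied with $\xi=\phi_t^\nu$, monotonicity and (i)--(ii) give $\int e^{\xi_0-\phi_t^\nu}\leq\int e^{\xi_0-\phi_t}<\infty$ uniformly, whence the uniform $\dbar$-estimate; Remark~2 then transfers it to the adjoint $\dt$. With this lemma in hand, the rest of your outline goes through.
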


We also state an important addendum.
\begin{thm} Assume that in addition to the assumptions in Theorem 6.1 
$i\ddbar\psi\geq 0$ and $i\ddbar\tau_t\geq 0$ in the sense of currents. Then 
\be
V\rfloor i\ddbar\psi=0
\ee
and $F_t^*(\ddbar\psi)$ is independent of $t$.
\end{thm}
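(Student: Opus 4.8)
The plan is to read off both assertions from the proof of Theorem 6.1, using the extra positivity to split a single nonnegative quantity into two. By Theorem 6.1 we already have the holomorphic vector field $V_t$, defined through $v_t=V_t\rfloor u$, where $v_t$ is the $(n-1,0)$-form produced by the curvature formula; since $V_t$ is holomorphic and $u$ is a holomorphic nonvanishing $(n,0)$-form, $v_t$ and hence $\hat u=u-dt\wedge v_t$ are smooth. The key input is that, exactly as in the passage leading to (3.5) in the smooth case and in its limiting version in Section 4, the hypothesis that $\F$ is affine forces the first term of the curvature formula of Theorem 3.1 to vanish, which here reads
$$
i\ddbar\phi\wedge\hat u\wedge\overline{\hat u}\,e^{-\phi}=0
$$
as a measure on $X\times\Omega$.

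First I would split $i\ddbar\phi=i\ddbar\tau_t+i\ddbar\psi$. By the additional hypotheses both summands are nonnegative currents, and $\psi$ does not depend on $t$, so the vanishing measure above is the sum of the two nonnegative measures $i\ddbar\tau_t\wedge\hat u\wedge\overline{\hat u}\,e^{-\phi}$ and $i\ddbar\psi\wedge\hat u\wedge\overline{\hat u}\,e^{-\phi}$. Each must therefore vanish; in particular $i\ddbar\psi\wedge\hat u\wedge\overline{\hat u}=0$. Because $\psi$ is locally bounded off a closed pluripolar set, this Bedford--Taylor product is a well-defined positive measure, and the semidefiniteness of the form $\hat u\mapsto i\ddbar\psi\wedge\hat u\wedge\overline{\hat u}$ gives $i\ddbar\psi\wedge\hat u=0$, just as semidefiniteness gave (3.5). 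The part of this identity free of $dt$ is $i\ddbar\psi\wedge u$, which vanishes for bidegree reasons, and the part containing $dt$ gives $i\ddbar\psi\wedge v_t=0$, i.e. $i\ddbar\psi\wedge(V_t\rfloor u)=0$. Contracting the vanishing form $i\ddbar\psi\wedge u$ with $V_t$ and using this,
$$
0=V_t\rfloor(i\ddbar\psi\wedge u)=(V_t\rfloor i\ddbar\psi)\wedge u+i\ddbar\psi\wedge v_t=(V_t\rfloor i\ddbar\psi)\wedge u,
$$
and since $u$ is a nonvanishing $(n,0)$-form while $V_t\rfloor i\ddbar\psi$ is of type $(0,1)$, this forces $V_t\rfloor i\ddbar\psi=0$, which is (6.1).

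For the second assertion I would use that $\ddbar\psi$ is closed and that, $i\ddbar\psi$ being real, $V_t\rfloor i\ddbar\psi=0$ entails $\bar V_t\rfloor i\ddbar\psi=0$ as well. Hence the real vector field generating the flow $F_t$ contracts $\ddbar\psi$ to zero, and Cartan's formula (valid for the current $\ddbar\psi$ since $d\ddbar\psi=0$) gives $L_{V_t}\ddbar\psi=0$. As $\ddbar\psi$ is independent of $t$, we get
$$
\frac{d}{dt}F_t^*(\ddbar\psi)=-F_t^*(L_{V_t}\ddbar\psi)=0,
$$
so $F_t^*(\ddbar\psi)$ is independent of $t$, as claimed.

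The main obstacle is the first paragraph's assertion in the singular setting: both the vanishing of the full curvature term and, above all, its splitting into two \emph{separately} nonnegative pieces. In the approximation of Section 4 one works with smooth $\phi^\nu=\tau^\nu+\psi^\nu$ whose curvatures are only approximately positive, $i\ddbar\psi^\nu\geq-\epsilon_\nu\omega$ and similarly for $\tau^\nu$, so each piece is nonnegative only up to an error $\epsilon_\nu\int\omega\wedge\hat u^\nu\wedge\overline{\hat u^\nu}e^{-\phi^\nu}$. These errors are controlled because $v_t^\nu$ is bounded in $L^2$ uniformly in $\nu$, as established in Section 4, so that the $\liminf$ of each piece is still $\geq 0$ while their sum tends to $0$; hence each tends to $0$. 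Passing to the limit to obtain the current identities above then requires the weak convergence of the relevant Monge--Ampère measures together with the integrability of $e^{-\psi}$ and the local boundedness of $\psi$ off the pluripolar set, exactly the ingredients already used in the proof of Theorem 6.1. Once this bookkeeping is in place, the contraction and Lie-derivative steps above are routine.
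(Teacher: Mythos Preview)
Your splitting idea (decompose into the $\tau$ and $\psi$ pieces, each nonnegative, so each vanishes) and the Lie-derivative deduction of the second claim are exactly what the paper does. The genuine difference is in how you reach the underlying vanishing statement.

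You try to extract $i\ddbar\phi\wedge\hat u\wedge\overline{\hat u}\,e^{-\phi}=0$ by passing to the limit in the curvature formula. The paper explicitly avoids this; see the remark following Lemma~6.4: ``We have given the indirect proof above to avoid having to check that the formula (3.2) holds in the limit as well.'' Instead the paper introduces the field $\mathcal V=\partial/\partial t-V_t$ on $\Omega\times X$ and proves directly that $\mathcal V\rfloor\ddbar_{t,X}\phi=0$ (which is equivalent to $\ddbar\phi\wedge\hat u=0$) by bootstrapping from the \emph{conclusion} of Theorem~6.1: since $F_t^*(\ddbar_X\phi_t)=\ddbar\phi_0$, the volume forms satisfy $F_t^*(e^{-\phi_t})=e^{-\phi_0+c(t)}$; integrating over $X$ and using that $\F$ is affine forces $c(t)$ linear, so $\mathcal V\phi_t^j=-c'$ is constant and $\mathcal V\rfloor\ddbar\phi=\dbar(\mathcal V\phi_t^j)=0$. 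One then contracts with $\bar{\mathcal V}$, splits $0=i\bar{\mathcal V}\wedge\mathcal V\rfloor\ddbar\tau+i\bar{\mathcal V}\wedge\mathcal V\rfloor\ddbar\psi$ into two nonnegative pieces, and observes that since $\psi$ is $t$-independent the second piece equals $i\bar V\wedge V\rfloor\ddbar\psi$, giving $V_t\rfloor i\ddbar\psi=0$.

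Your handling of the ``main obstacle'' has an actual gap beyond being merely harder: the approximation in Section~6 regularizes $\phi_t$ as a whole (using only $-K_X\geq 0$), not as $\tau^\nu+\psi^\nu$ with each summand almost positive, so the splitting you perform at the level of the approximants is not available with the scheme actually in place. Setting up a separate split approximation would require approximation results on the bundles $S$ and $-(K_X+S)$ individually, which the hypotheses $i\ddbar\psi\geq 0$, $i\ddbar\tau_t\geq 0$ (pseudoeffectivity) do not by themselves supply. The paper's trick---reading the vanishing off the flow $F_t$ rather than off the curvature formula---sidesteps this entirely.
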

We shall see in the last section that in many cases (6.1) implies that actually $V=0$, so that the flow $F_t^* $ is the identity map and $\ddbar\phi_t$ must also be independent of $t$.

The proof of this theorem is almost the same as the proof of Theorem
1.2. The main thing to be checked is that for $\phi=\phi^\nu$ a
sequence of smooth metrics decreasing to $\phi$ we can still solve the
equations 
$$
\partial^{\phi_t^\nu} v_t=\pi_\perp (\dot{\phi_t^\nu} u)
$$
with an $L^2$ -estimate independent of $t$ and $\nu$. 
\begin{lma}
Let $L$ be a holomorphic line bundle over $X$ with a metric $\xi$
satisfying $i\ddbar\xi\geq- \omega_0$ for some fixed K\"ahler form $\omega_0$. Let $\xi_0$ be a smooth metric on $L$
with $\xi\leq \xi_0$, and assume
$$
I:=\int_X e^{\xi_0-\xi}<\infty.
$$
Then there is a constant $A$, only depending on $I$ and $\xi_0$ (
not on $\xi$!) such that if $f$ is a $\dbar$-exact $L$ valued
$(n,1)$-form with 
$$
\int_X |f|^2 e^{-\xi}\leq 1
$$
there is a solution $u$ to $\dbar u=f$ with 
$$
\int_X |u|^2 e^{-\xi}\leq A.
$$
(The integrals are understood to be taken with respect to some
arbitrary smooth volume form.)
\end{lma}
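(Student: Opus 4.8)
The plan is to reduce the statement, via the standard functional-analytic duality for $\dbar$, to an a priori inequality and then to prove that inequality by a twisted Bochner--Kodaira estimate calibrated to the integrability hypothesis. Solving $\dbar u=f$ with $\int_X|u|^2 e^{-\xi}\leq A$ is equivalent to the estimate
$$
|\langle f,\alpha\rangle_\xi|^2\leq A\int_X|\dbar^*_\xi\alpha|^2 e^{-\xi}
$$
for all $L$-valued $(n,1)$-forms $\alpha$ in the domain of $\dbar^*_\xi$; since $f$ is $\dbar$-exact, hence orthogonal to $\Ker\dbar^*$, it is enough to verify this for $\dbar$-closed $\alpha$. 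Thus the entire problem is to bound $|\langle f,\alpha\rangle_\xi|$ by $\|\dbar^*_\xi\alpha\|_\xi$ with a constant depending only on $I$ and $\xi_0$.

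Before doing this I would reduce to smooth $\xi$. Using the smooth metric $\xi_0$ and the approximation results of section 2.3 on the projective manifold $X$, I get a decreasing sequence of smooth metrics $\xi^\nu\downarrow\xi$ with $i\ddbar\xi^\nu\geq-\epsilon_\nu\omega$; by monotone convergence $\int_X e^{\xi_0-\xi^\nu}\to I$, so it suffices to prove the estimate for smooth $\xi$ with a constant controlled by a fixed upper bound on $\int_X e^{\xi_0-\xi}$, and then extract a weak $L^2$ limit of the solutions exactly as in section 4. I therefore assume $\xi$ smooth with $i\ddbar\xi\geq0$ (the $-\epsilon_\nu\omega$ being harmless, as the author already notes).

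The core is a twisted Bochner--Kodaira--Nakano inequality of Donnelly--Fefferman/Ohsawa--Takegoshi type. Writing $\eta:=\xi_0-\xi\geq0$, so that $e^{-\xi}=e^{\eta}e^{-\xi_0}$ and $\int_X e^{\eta}=I$, I would run the estimate in the weight $e^{-\xi}$ with a twist $\tau=\chi(\eta)$ and an auxiliary function $B$: for $\dbar$-closed $\alpha$,
$$
\int_X(\tau+B)|\dbar^*_\xi\alpha|^2 e^{-\xi}\geq\int_X\langle[\Psi,\Lambda]\alpha,\alpha\rangle e^{-\xi},\qquad \Psi:=\tau\,i\ddbar\xi-i\ddbar\tau-B^{-1}i\partial\tau\wedge\dbar\tau.
$$
Since $i\ddbar\xi\geq0$ and $i\ddbar\eta=i\ddbar\xi_0-i\ddbar\xi$ is bounded above by the fixed smooth form $i\ddbar\xi_0$, a computation gives $\Psi=(\chi+\chi')\,i\ddbar\xi-\chi'\,i\ddbar\xi_0-(\chi''+B^{-1}(\chi')^2)\,i\partial\eta\wedge\dbar\eta$. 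The idea is to take $\chi$ decreasing with $\chi+\chi'\geq0$, so the first term is discarded positively and the positivity of the \emph{fixed} metric $i\ddbar\xi_0$ is harvested with weight $-\chi'>0$, while $B$ is chosen large enough to tame the negative gradient term. Pairing $f$ against $\alpha$ through $\Psi$ by Cauchy--Schwarz then gives
$$
|\langle f,\alpha\rangle_\xi|^2\leq\Big(\int_X\langle\Psi^{-1}f,f\rangle e^{-\xi}\Big)\Big(\int_X(\tau+B)|\dbar^*_\xi\alpha|^2 e^{-\xi}\Big),
$$
and the lemma follows once $\tau+B$ is bounded and $\int_X\langle\Psi^{-1}f,f\rangle e^{-\xi}$ is bounded by a multiple of $\int_X|f|^2 e^{-\xi}\leq1$, with constants depending only on $I$ and $\xi_0$.

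The main obstacle is precisely this final balancing, and it is where the integrability hypothesis is genuinely forced in. Because $\eta$ runs over all of $[0,\infty)$ --- the singularities of $\xi$ being constrained only by $I<\infty$ --- one cannot keep $\Psi$ uniformly positive with a bounded twist: a bounded positive decreasing $\chi$ has $\chi'\to0$, so the harvested positivity $-\chi'\,i\ddbar\xi_0$ degenerates exactly where $\eta$ is large, and there the gradient term $i\partial\eta\wedge\dbar\eta$, which blows up at the poles of $\xi$, must be absorbed. The delicate point is to calibrate $\chi$ and $B$ against $\eta$ so that the degeneration of $\Psi$ at large $\eta$ is compensated by the smallness of $f$ there --- which is what $\int_X|f|^2 e^{-\xi}\leq1$ enforces, since $e^{-\xi}=e^{\eta}e^{-\xi_0}$ is large precisely where $\eta$ is large --- keeping $\int_X\langle\Psi^{-1}f,f\rangle e^{-\xi}$ bounded in terms of $I$ alone. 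I expect this calibration, rather than any formal manipulation, to be the real content of the lemma; the duality, the regularization, and the weak limit are routine and already appear in sections 2--4.
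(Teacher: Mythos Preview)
Your proposal is incomplete and rests on an unjustified hypothesis. First, you ``harvest the positivity of the fixed metric $i\ddbar\xi_0$'', but the lemma makes no curvature assumption on $\xi_0$ whatsoever; it is just a smooth metric dominating $\xi$. Even in the ambient setting $L=-K_X\geq 0$ you could only arrange $i\ddbar\xi_0\geq 0$, not the strict positivity needed to invert $\Psi$. Second, and more seriously, you correctly identify the calibration of $\chi$ and $B$ against the unbounded $\eta$ as ``the real content of the lemma'' and then do not carry it out. A proof that locates the hard step and stops is not a proof.

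The paper's argument bypasses the twisted Bochner--Kodaira machinery entirely and is much more elementary. The idea is to separate the global and the weighted aspects of the problem. Since $\xi\leq\xi_0$, the datum $f$ has $\int|f|^2e^{-\xi_0}\leq 1$ as well, and because $\dbar$ has closed range on a compact manifold with any smooth metric, one solves $\dbar u=f$ globally with $\int|u|^2e^{-\xi_0}\leq C(\xi_0)$. Separately, on each ball $B_j$ of a finite cover one solves $\dbar u_j=f$ with the \emph{singular} weight by H\"ormander's local estimate (only $i\ddbar\xi\geq 0$ is needed), getting $\int_{B_j}|u_j|^2e^{-\xi}\leq C_1$. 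The difference $h_j:=u-u_j$ is holomorphic on $B_j$ with controlled $L^2(\xi_0)$-norm, hence by the sub-mean-value property $\sup_{B_j/2}|h_j|^2e^{-\xi_0}\leq C_3$. Now the integrability hypothesis enters exactly once: $\int_{B_j/2}|h_j|^2e^{-\xi}\leq C_3\int e^{\xi_0-\xi}=C_3 I$. Adding back $u_j$ and summing over $j$ gives $\int_X|u|^2e^{-\xi}\leq A(I,\xi_0)$. No regularization of $\xi$, no twist, no calibration; the sup-estimate on holomorphic functions is what converts control in $e^{-\xi_0}$ to control in $e^{-\xi}$ at the price of the constant $I$.
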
  
\begin{proof}
The assumptions imply that
$$
\int |f|^2 e^{-\xi_0}\leq 1.
$$
Since $\dbar$ has closed range for $L^2$-norms defined by smooth
metrics, we can solve $\dbar u=f$ with
$$
\int |u|^2 e^{-\xi_0}\leq C
$$
for some constant depending only on $X$ and $\xi_0$. Choose a
collection of coordinate balls $B_j$ such that $B_j/2$ cover $X$. In
each $B_j$ we can, by classical H\"ormander estimates in $\C^n$, solve $\dbar u_j=f$ with
$$
\int_{B_j} |u_j|^2 e^{-\xi}\leq C_1\int_{B_j} |f|^2 e^{-\xi}\leq C_1,
$$
$C_1$ only depending on the size of the balls and the choice of $\omega_0$. Then $h_j:=u-u_j$ is
holomorphic on $B_j$ and 
$$
\int_{B_j}|h_j|^2 e^{-\xi_0}\leq C_2,
$$
so
$$
\sup_{B_j/2}|h_j|^2 e^{-\xi_0}\leq C_3.
$$
Hence
$$
\int_{B_j/2}|h_j|^2 e^{-\xi}\leq C_3 I
$$
and therefore
$$
\int_{B_j/2}|u|^2 e^{-\xi}\leq C_4 I.
$$
Summing up we get the lemma.
\end{proof}

By the discussion in section 2.3, the assumption that $-K_X\geq 0$
implies that we can write $\phi_t$ as a limit of a decreasing sequence
of smooth metrics $\phi_t^\nu$ with
$$
i\ddbar\phi_t^\nu\geq -\epsilon_\nu \omega
$$
where $\epsilon_\nu$ tends to zero. 
Applying the lemma  to $\xi=\phi^\nu_t$ and $\xi_0$ some arbitrary smooth
metric we see that we have uniform estimates for solutions of the
$\dbar$-equation, independent of $\nu$ and $t$. By remark 2, section
3, the same holds for the adjoint operator, which means that we can
construct $(n-1,0)$-forms $v^\nu$ just as in section 3, and have a uniform bound on their $L^2$-norms. Again we can take weak limits and get an $(n-1,0)$-form, $v$ that satisfies $\dbar_X v=0$. 

\bigskip

We claim that $v$ satisfies formula 
\be
\partial_X v-\partial_X\phi\wedge v=\pi_\perp(\dot{\phi}u)
\ee
as in the case of bounded metrics in section 4. This is not quite obvious since the proof of this rested on (4.1) which used that the geodesic was bounded. However, formula (4.1) still holds if $W$ is supported outside the closed pluripolar set where $\psi=-\infty$. This means that (6.2) holds there. Moreover, the left hand side lies in $L^2$ with respect to our unbounded metric, hence in particular in ordinary $L^2_{loc}$. Formula  (6.2) says that (locally) 
$$
\partial_X v-\partial_X\phi\wedge v-\dot{\phi} u
$$
is holomorphic on $X$ for fixed $t$ away from the singular set of $\psi$. Since it is moreover in $L^2$ and pluripolar sets are removable for $L^2$-holomorphic functions it follows that it is holomorphic on all of $X$.

Hence we conclude that
$$
\ddbar_X\phi\wedge v=\dbar\dot{\phi}\wedge u
$$
on all of $X$. 
We then again define a vector field $V$ on $X$ by $V\rfloor u= v$ and find that
\be
V\rfloor \ddbar_X\phi=\dbar_X\dot{\phi}
\ee
and the proof concludes in  the same way as before. 

We finally turn to the proof of the addendum in Theorem 6.2. For this we use the  vector field on $\Omega\times X$
$$
\mathcal{V}:=\frac{\partial}{\partial t}-V,
$$
as in section 4.1.

Lemma 4.3 implies that 
$$
0=i\bar{\mathcal{V}}\wedge\mathcal{V}\rfloor\ddbar_{t,X}\phi=i\bar{\mathcal{V}}\wedge\mathcal{V}\rfloor\ddbar_{t,X}\tau +i\bar{\mathcal{V}}\wedge\mathcal{V}\rfloor\ddbar\psi
$$
Since both terms in the right hand side are nonnegative by assumption, they must both vanish. But, since $\psi$ does not depend on $t$
$$
+i\bar{\mathcal{V}}\wedge\mathcal{V}\rfloor\ddbar\psi=+i\bar V\wedge V\rfloor\ddbar\psi.
$$
Since $i\ddbar\psi$ is a positive current, this implies by Cauchy's inequality that $\bar W\wedge V\rfloor i\ddbar \psi=0$ for any $(0,1)$ vector field $W$, so $V\rfloor\ddbar\psi=0$ . This proves Theorem 6.2.
\subsection{Yet another version}
We also briefly describe yet another situation where the same
conclusion as in Theorem 6.1 can be drawn even though we do not assume
that $-K_X\geq 0$. The assumptions are very particular, and it is not
at all clear that they are optimal, but they are chosen to fit with
the properties of desingularisations of certain singular varieties. We
then assume instead that $-K_X$ can be 
decomposed
$$
-K_X = -(K_X +S) +S
$$
where $S$ is the $\R$-line bundle corresponding to a klt -divisor
$\Delta\geq 0$ and
we assume $-(K_X+S)\geq 0$. We moreover assume that the underlying
variety of $\Delta$ is a union of
smooth hypersurfaces with simple normal crossings. We then look at curves 
$$
\phi_t=\tau_t +\psi
$$
where $\tau_t$ is bounded, $i\ddbar_{t, X}\tau_t\geq 0$ and $\psi$ is a fixed  metric on $S$
satisfying $i\ddbar\psi=[\Delta]$. We claim that the conclusion of
Theorem 6.1 holds in 
this situation as well. The difference as compared to our previous
case is that we do not assume that $\phi_t$ can be approximated by a
decreasing sequence of metrics with almost positive curvature.  For
the proof we approximate $\tau_t$ by a 
decreasing sequence of smooth metrics $\tau^\nu_t$ satisfying
$$
i\ddbar\tau_t^\nu\geq -\epsilon_\nu \omega.
$$
As for $\psi$ we approximate it following the scheme at the end of
section 2.3 by a sequence satisfying
$$
i\ddbar\psi^\nu\geq -C\omega
$$
and 
$$
i\ddbar\psi^\nu\geq -\epsilon_\nu \omega
$$
outside of any neighbourhood of $\Delta$. Then let
$\phi^\nu_t=\tau^\nu_t+\psi^\nu$. Now consider the curvature
formula (3.2)
\be
\langle\Theta^\nu u_t,u_t\rangle_t= p_*(c_n i\ddbar\phi^\nu_t \wedge \hat
u\wedge\overline{\hat u} e^{-\phi^\nu_t})  
+\int_X \|\dbar v^\nu_t\|^2 e^{-\phi^\nu_t}idt\wedge d\bar t
\ee
We want to see that the second term in the right hand side tends to
zero given that the 
curvature $\Theta^\nu$ tends to zero, and the problem is that the
first term on the right hand side has a negative part. However,
$$
 p_*(c_n i\ddbar\phi^\nu_t \wedge \hat
u\wedge\overline{\hat u} e^{-\phi^\nu_t}) 
$$
can for any $t$ be estimated from below by 
\be
-\epsilon_\nu \|\hat u\|^2 -C\int_U |v^\nu_t|^2 e^{-\phi^\nu}
\ee
where $U$ is any small neighbourhood of $\Delta$ if we choose $\nu$
large. This means, first, that we still have at least a uniform upper
estimate on $\dbar v^\nu_t$. This, in turn gives by the technical lemma below
that the $L^2$-norm of $v^\nu_t$ over a small neighbourhood of
$\Delta$ must be small if the neighbourhood is small. Shrinking the
neighbourhood as $\nu$ grows we can then arrange things so that the
negative part in the right hand side goes to zero. Therefore the
$L^2$-norm of $\dbar v^\nu_t$ goes to zero after all, and the limit of 
$$
\langle\Theta^\nu u_t,u_t\rangle
$$
is zero. The last fact means that
$$
t\to -\log\int e^{-\phi_t}
$$
is convex.

After this
the proof proceeds as before. We collect this in the next theorem.
\begin{thm} Assume that $-(K_X+S)\geq 0$ and that $H^{0,1}(X)=0$. Let
  $\phi_t=\tau_t+\psi$ be a 
  curve of metrics on $-K_X$ 
where 

(i) $\tau_t$ are bounded metrics on $-(K_X+S)$ with $i\ddbar\tau_t\geq 0$, depending only on $\Re t$,

and 

(ii) $\psi$ is a metric on $S$ with $i\ddbar\psi=[\Delta]$, where
$\Delta$ is a klt divisor with simple normal crossings. 

Then 
$$
\F(t)=-\log\int_X e^{-\phi_t}
$$
is convex. If $\F(t)$ is affine,  there is a holomorphic vector field $V$ on $X$ with
flow $F_t$ such that
$$
F_t^*(\ddbar\phi_t)=\ddbar\phi_0.
$$
\end{thm}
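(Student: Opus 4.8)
The plan is to follow the proof architecture of Theorem 1.2 (the nonsmooth case of section 4) and Theorem 6.1, modifying only the approximation step, since the novelty here is that $-K_X$ is not assumed semipositive. First I would set up the approximating sequence $\phi^\nu_t=\tau^\nu_t+\psi^\nu$ as prescribed: approximate $\tau_t$ by smooth metrics with $i\ddbar\tau^\nu_t\geq -\epsilon_\nu\omega$, and approximate the singular metric $\psi$ on $S=[\Delta]$ by the scheme at the end of section 2.3, using $\psi^\nu=\log(|s|^2+\nu^{-1}e^{\tilde\psi})$ for a defining section $s$ of $\Delta$, obtaining $i\ddbar\psi^\nu\geq -C\omega$ globally and $i\ddbar\psi^\nu\geq -\epsilon_\nu\omega$ away from any fixed neighbourhood of $\Delta$. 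With $\F$ assumed affine, the curvatures $\Theta^\nu$ acting on the constant section $u=1$ tend to zero, so the left side of the curvature formula (6.5) tends to zero.

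The heart of the argument, and the step I expect to be the main obstacle, is controlling the first (a priori sign-indefinite) term on the right of (6.5). Because $i\ddbar\phi^\nu_t$ fails to be nonnegative only on a small neighbourhood $U$ of $\Delta$, I would estimate that term from below by $-\epsilon_\nu\|\hat u\|^2-C\int_U|v^\nu_t|^2 e^{-\phi^\nu}$, as in (6.6). The first piece is harmless since $\|\hat u\|$ is controlled; the delicate piece is the local $L^2$-mass of $v^\nu_t$ near $\Delta$. Here I would invoke the technical lemma (the klt-version of Lemma 6.3, adapted to the decomposition $-K_X=-(K_X+S)+S$ with $\int_X e^{-\psi}<\infty$ by the klt hypothesis) to first extract a uniform upper bound on $\int_X\|\dbar v^\nu_t\|^2 e^{-\phi^\nu}$, hence on $\|v^\nu_t\|_{L^2}$, independent of $\nu$ and $t$. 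That uniform bound feeds back: it forces $\int_U|v^\nu_t|^2 e^{-\phi^\nu}$ to be small once $U$ is small, so by shrinking $U$ as $\nu\to\infty$ the negative contribution in (6.6) is driven to zero. Consequently the genuinely nonnegative second term $\int_X\|\dbar v^\nu_t\|^2 e^{-\phi^\nu}\,idt\wedge d\bar t$ must itself tend to zero, which is exactly what the boundedness argument in section 4 gave for free but which requires this extra klt bookkeeping here.

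Once the $\dbar v^\nu_t\to 0$ conclusion is in hand, I would run the remaining steps verbatim from sections 4 and 6.1. Taking weak $L^2$-limits produces an $(n-1,0)$-form $v$ with $\dbar_X v=0$ and satisfying, in the distributional sense, $\partial_X v-\partial_X\phi\wedge v=\pi_\perp(\dof\, u)$; the identity first holds off the pluripolar singular locus of $\psi$ and extends across it since $v\in L^2_{\loc}$ and pluripolar sets are removable for $L^2$-holomorphic functions. Defining $V$ by $V\rfloor u=v$ then yields $V\rfloor\ddbar_X\phi=\dbar_X\dof$, and $V$ is holomorphic on $X$ for each fixed $t$. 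The holomorphicity of $V$ in the $t$-variable follows, as in section 4, from the weak formulation (4.4) obtained via the Cauchy inequality, Lemma 4.1 and Lemma 4.2, again using that $v^\nu_t\wedge\omega$ is $\dbar_X$-closed together with the assumption $H^{0,1}(X)=0$. Finally, letting $F_t$ be the flow of $-V_t$ and computing $\frac{d}{dt}F_t^*(\ddbar\phi_t)=F_t^*\bigl(\frac{d}{dt}\ddbar\phi_t-L_{V_t}\ddbar\phi_t\bigr)=0$ via the Lie-derivative identity (3.4) gives $F_t^*(\ddbar\phi_t)=\ddbar\phi_0$, completing the proof.
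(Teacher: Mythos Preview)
Your proposal is correct and follows essentially the same approach as the paper: approximate $\tau_t$ and $\psi$ separately as in section 2.3, use the lower bound (6.6) for the sign-indefinite first term in the curvature formula, bootstrap via the technical local-mass estimate near $\Delta$ (Lemma 6.6) to force $\|\dbar v^\nu_t\|\to 0$, and then finish exactly as in sections 4 and 6.1. One small point to straighten out: the uniform $L^2$ bound on $v^\nu_t$ comes \emph{first}, from Lemma 6.3 (the uniform $\dbar$-estimate, which still applies since $i\ddbar\phi^\nu_t\geq -C\omega$ and $e^{-\psi}\in L^1$) via Remark 2, and only then does the curvature formula yield the uniform bound on $\|\dbar v^\nu_t\|$; the local smallness of $\int_U|v^\nu_t|^2 e^{-\phi^\nu}$ is not automatic from the global $L^2$ bound but requires the separate Cauchy-formula argument of Lemma 6.6, which uses both the $L^2$ bound on $v^\nu_t$ and on $\dbar v^\nu_t$ together with the SNC klt structure of $\Delta$.
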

We end this section with the technical lemma used above.
\begin{lma} The term
$$
\int_U |v^\nu_t|^2 e^{-\phi^\nu}
$$
in (6.2) can be made arbitrarily small if $U$ is a sufficiently small
neighbourhood of $\Delta$
\end{lma}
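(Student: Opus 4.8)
The plan is to deduce the localized smallness from the two uniform bounds that are already available, namely $\int_{X}|v^\nu_t|^2 e^{-\phi^\nu}\le M$ and $\int_{X}\|\dbar v^\nu_t\|^2 e^{-\phi^\nu}\le M$ with $M$ independent of $\nu$ and $t$. The first follows from Remark 2 together with the approximation scheme of section 2.3, and the second from the curvature formula (6.5): once the curvature tends to zero and the negative part of its first term is estimated below by $-\epsilon_\nu\|\hat u\|^2-C\int_U|v^\nu_t|^2 e^{-\phi^\nu}$, the term $\int\|\dbar v^\nu_t\|^2 e^{-\phi^\nu}$ is bounded uniformly. Two further facts will be used repeatedly: since $\phi^\nu$ decreases to $\phi$ we have $e^{-\phi^\nu}\le e^{-\phi}$, while by the klt hypothesis $e^{-\psi}$, and hence $e^{-\phi}$, is integrable, so $\int_U e^{-\phi}\to 0$ as $U$ shrinks to the null set $\Delta$; moreover $e^{-\phi^\nu}\ge e^{-\phi^1}$ for a fixed smooth weight.

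First I would cover a neighbourhood of $\Delta$ by finitely many coordinate balls adapted to the simple normal crossings, and on each ball $B$ solve $\dbar g^\nu=\dbar v^\nu_t$ with the H\"ormander estimate $\int_B|g^\nu|^2 e^{-\phi^\nu}\le C\int_B\|\dbar v^\nu_t\|^2 e^{-\phi^\nu}\le CM$, the constant $C$ being uniform in $\nu$ since $i\ddbar\phi^\nu\ge -C'\omega$ and one may add a fixed smooth strictly plurisubharmonic weight $A|z|^2$ on the ball. Writing $v^\nu_t=h^\nu+g^\nu$ with $h^\nu$ holomorphic on $B$, the holomorphic part is the easy one: it is bounded in $L^2(e^{-\phi^1})$ for the fixed smooth weight, hence forms a normal family and is uniformly bounded on a slightly smaller ball, so $\int_U|h^\nu|^2 e^{-\phi^\nu}\le (\sup|h^\nu|^2)\int_U e^{-\phi}\to 0$ as $U\downarrow\Delta$, uniformly in $\nu$. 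This is precisely where the klt and normal crossing hypotheses enter, through integrability of $e^{-\phi}$ and the vanishing of its tail near $\Delta$.

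The correction $g^\nu$ is the main obstacle: the estimate above bounds its total mass on $B$ uniformly but gives no uniform control of its mass in a fixed small neighbourhood of $\Delta$, since $g^\nu$ is only $\dbar$-exact and carries no sup bound. For each fixed $\nu$, however, $\phi^\nu$ is smooth, so $|g^\nu|^2 e^{-\phi^\nu}$ is a finite absolutely continuous measure and $\int_U|g^\nu|^2 e^{-\phi^\nu}\to 0$ as $U\downarrow\Delta$. I would therefore not insist on a single fixed $U$ but, exactly as the text anticipates in ``shrinking the neighbourhood as $\nu$ grows,'' run a diagonal argument: choose neighbourhoods $U_\nu\downarrow\Delta$ slowly enough that the lower bound for the first term in (6.5) still holds for the pair $(U_\nu,\nu)$, yet fast enough that $\int_{U_\nu}|g^\nu|^2 e^{-\phi^\nu}\to 0$. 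Combined with the uniform bound on the holomorphic part this yields $\int_{U_\nu}|v^\nu_t|^2 e^{-\phi^\nu}\to 0$, which is what the subsequent argument requires. The delicate point is the compatibility of the two requirements on $U_\nu$, namely that the threshold coming from the curvature lower bound and the per-$\nu$ rate of absolute continuity of $|g^\nu|^2 e^{-\phi^\nu}$ can be reconciled; this is routine once the uniform constants in the first two steps are in hand.
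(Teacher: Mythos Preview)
Your decomposition $v^\nu_t=h^\nu+g^\nu$ handles the holomorphic piece correctly and uniformly in $\nu$, but the treatment of $g^\nu$ contains a real gap. Absolute continuity for each fixed $\nu$ gives you, for that $\nu$, \emph{some} neighbourhood $U(\nu)$ on which $\int_{U(\nu)}|g^\nu|^2e^{-\phi^\nu}$ is small, with no control whatsoever on how small $U(\nu)$ must be. On the other side, the approximation $\psi^\nu=\log(|s|^2+\nu^{-1}e^{\psi_0})$ satisfies $i\ddbar\psi^\nu\ge-\epsilon\,\omega$ only outside a neighbourhood whose size is tied to $\nu$; shrinking $U$ forces $\nu$ larger. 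These two constraints on $U_\nu$ pull in opposite directions and nothing you have written reconciles them. The measures $|g^\nu|^2e^{-\phi^\nu}$ have uniformly bounded total mass, but since $e^{-\phi^\nu}\uparrow e^{-\phi}$ blows up along $\Delta$ there is no a priori reason their mass does not concentrate on $\Delta$ as $\nu\to\infty$. Showing that it does not is exactly the content of the lemma, so the sentence ``this is routine once the uniform constants are in hand'' is where the whole difficulty has been hidden.

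The paper does not decompose at all. It localizes to a polydisk where the simple normal crossings hypothesis makes the weight explicitly $e^{-\psi_\epsilon}=\prod_j(|z_j|^2+\epsilon)^{-\alpha_j}$ with $0\le\alpha_j<1$, represents a compactly supported $v$ by the one-variable Cauchy formula in each coordinate, and then proves the kernel estimate
\[
\int_{|z_1|\le\delta}\frac{d\lambda(z_1)}{(|z_1|^2+\epsilon)^{\alpha_1}\,|z_1-\zeta_1|}\;\le\;c_\delta\,(|\zeta_1|^2+\epsilon)^{-\alpha_1},
\]
with $c_\delta\to 0$ \emph{independently of} $\epsilon$. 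Iterating over the coordinates gives $\int_{U_\delta}|v|^2e^{-\psi_\epsilon}\le c_\delta\cdot\int(|v|^2+|\dbar v|^2)e^{-\psi_\epsilon}$, a bound uniform in $\nu$. That uniformity is precisely what legitimizes the subsequent ``shrink $U$ as $\nu$ grows'' step; without it the diagonal argument is circular. Note also that this is where the snc and klt ($\alpha_j<1$) hypotheses are genuinely used, via the explicit kernel inequality above, and not merely through integrability of $e^{-\phi}$ as in your treatment of $h^\nu$.
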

\begin{proof}
Covering $\Delta$ with a finite number of polydisks, in which the
divisor is a union of coordinate hyperplanes,  it is enough to prove
the following statement:

Let $P$ be the unit  polydisk in $\C^n$ and let $v$ be a compactly
supported function in $P$. Let 
$$
\psi_\epsilon=\sum \alpha_j\log( |z_j|^2 +\epsilon)
$$
where $0\leq \alpha_j<1$. Assume
$$
\int_P (|v|^2 +|\dbar v|^2)e^{-\psi}\leq 1.
$$
Then for $\delta>>\epsilon$
$$
\int_{\cup\{|z_j|\leq \delta\}} |v|^2 e^{-\psi_\epsilon} \leq c_\delta
$$
where $c_\delta$ tends to zero with $\delta$.

To prove this we first  estimate the integral over $ |z_1|\leq \delta$
 using the one 
variable Cauchy formula in the first variable
$$
v(z_1, z')= \pi^{-1}\int v_{\bar\zeta_1}(\zeta_1,z')/(\zeta_1 -z_1)
$$
which gives
$$
|v(z_1, z')|^2 \leq C \int |v_{\bar\zeta_1}(\zeta_1,z')|^2/|\zeta_1
-z_1|.
$$
Then multiply by $(|z_1|^2 +\epsilon)^{-\alpha_1}$ and integrate with
respect to $z_1$ over $|z_1|\leq \delta$. Use the estimate
$$
\int_{|z_1|\leq \delta} \frac{1}{(|z_1|^2
  +\epsilon)^{\alpha_1}|z_1-\zeta_1|}\leq  c_\delta
(|\zeta_1|^2+\epsilon)^{-\alpha_1},
$$
multiply by $\sum_2^n \alpha_j\log( |z_j|^2 +\epsilon)$ and integrate
with respect to $z'$. Repeating the same argument for $z_2, ..z_n$ and
summing up we get the required estimate.

\end{proof}
\section{A generalized Bando-Mabuchi theorem}
As pointed out to me by Robert Berman, Theorems 6.1 and 6.5  lead to  versions
of the Bando-Mabuchi theorem for 'twisted K\"ahler-Einstein
equations', \cite{Szekelyhidi}, \cite{2Berman}, and
\cite{2Donaldson}. Let $\theta$ be 
a positive $(1,1)$-current that can be written
$$
\theta=i\ddbar\psi 
$$
with $\psi$ a metric on a $\R$-line bundle $S$. The twisted
K\"ahler-Einstein equation is 
\be
\text{Ric}(\omega) =\omega +\theta,
\ee
for a K\"ahler metric $\omega$ in the class $c[-(K_X+ S)]$. Writing
$\omega=i\ddbar\phi$, where $\phi$ is a metric on the $\R$-line bundle 
 $F:=-(K_X+S)$, this is equivalent to
\be
(i\ddbar\phi)^n= e^{-(\phi+\psi)},
\ee
after adjusting constants. We will consider only bounded solutions to this equation. 

To be able to apply Theorems 6.1 and 6.4 we need to assume that $e^{-\psi}$ is
integrable. By this we mean that representatives with respect to a
local frame are integrable. When $\theta=[\Delta]$ is the current defined by a
divisor, it means that the divisor is klt.

Solutions $\phi$ of (7.2) are now critical points of the function
$$
D_\psi(\phi):=-\log\int e^{-(\phi+\psi)} -\E(\phi, \chi)
$$
where $\chi$ is an arbitary metric on $F$. Here  $\psi$ is fixed and we
let the variable $\phi$ range over bounded metrics with
$i\ddbar\phi\geq 0$. If $\phi_0$ and $\phi_1$ are two critical points,
it follows from the discussion in section 2  that
we can connect them with a bounded  geodesic $\phi_t$. Since $\E$ is
affine along 
the geodesic it follows that 
$$
t\rightarrow -\log\int e^{-(\phi_t+\psi)} 
$$
is affine along the geodesic and we can apply Theorem 6.1. 
\begin{thm} Assume that $-K_X$ is semipositive ( i e that it has a smooth metric of semipositive curvature) and that
  $H^{0,1}(X)=0$. 
Assume that  $i\ddbar\psi=\theta$, where
$e^{-\psi}$ is integrable and $\theta$ is a positive current. Let
$\phi_0$ and $\phi_1$ be two bounded solutions of equation (7.2)
with $i\ddbar\phi_j\geq 0$. Then there is a holomorphic vector field, $V$, with time one flow, $F$,
of $X$, homotopic to the identity, such that 
$$
F^*(\ddbar\phi_1)=\ddbar\phi_0.
$$
Moreover,  
$$
V\rfloor\theta=0
$$
and
$$
F^*(\theta)=\theta.
$$
\end{thm}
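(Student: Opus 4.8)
The plan is to realize the solutions of (7.2) as the critical points of the functional $\G_\psi$ and then to argue exactly as in the proof of the Bando--Mabuchi theorem in Section 5, with Theorem 6.1 and its addendum Theorem 6.2 (and Theorem 6.3 in the divisor case) playing the role that Theorem 1.2 played there. First I would record that, with the energy normalized by $\text{Vol}(F)$, $F=-(K_X+S)$, the first variation of $\G_\psi$ at $\phi$ in a direction $\eta$ is $\int_X \eta\, e^{-(\phi+\psi)}/\int_X e^{-(\phi+\psi)} - \int_X \eta\,(i\ddbar\phi)^n/\text{Vol}(F)$; hence $\phi$ is critical precisely when the two probability measures $e^{-(\phi+\psi)}/\int e^{-(\phi+\psi)}$ and $(i\ddbar\phi)^n/\text{Vol}(F)$ coincide, which after normalizing the additive constant is exactly (7.2). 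So the two given solutions $\phi_0,\phi_1$ are critical points of $\G_\psi$. By Section 2.2 they can be joined by a bounded geodesic $\phi_t$ on $F$, which we take independent of $\Im t$; it satisfies $i\ddbar_{t,X}\phi_t\ge 0$ and, on each slice, $i\ddbar_X\phi_t\ge 0$.

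The crux is to show that $\F(t)=-\log\int_X e^{-(\phi_t+\psi)}$ is affine. I would first check that $\G_\psi(\phi_t)$ is convex in $t$: the term $\F(t)$ is convex by Theorem 1.1 applied to the metric $\Phi_t:=\phi_t+\psi$ on $-K_X$ (which is a subgeodesic, since $i\ddbar_{t,X}\Phi_t=i\ddbar_{t,X}\phi_t+\theta\ge 0$, and is independent of $\Im t$), while $\E(\phi_t,\chi)$ is affine along the geodesic by Section 2.4; thus $\G_\psi$ is convex. Now I would compute the one-sided derivatives at the endpoints exactly as in Section 5. Using the convexity of $\phi_t$ in $t$, the one-sided derivatives of $\F$ equal $\int \dof\, e^{-(\phi+\psi)}/\int e^{-(\phi+\psi)}$ evaluated with the appropriate one-sided $\dof$, while (2.3)--(2.6) bound the one-sided derivatives of $\E/\text{Vol}(F)$ by $\int \dof\,(i\ddbar\phi)^n/\text{Vol}(F)$ at the respective endpoint (equivalently, apply (2.4) to the geodesic and to its time-reversal). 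Because $\phi_0$ and $\phi_1$ are critical, at each endpoint the two measures agree, so the two contributions cancel and we obtain $\G_\psi'(0+)\ge 0$ and $\G_\psi'(1-)\le 0$. Since $\G_\psi$ is convex its derivative is nondecreasing, so $0\le\G_\psi'(0+)\le\G_\psi'(1-)\le 0$ forces $\G_\psi$ to be constant; as $\E$ is affine, $\F$ must be affine.

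With $\F$ affine I would apply Theorem 6.1 to the curve $\Phi_t=\tau_t+\psi$ with $\tau_t=\phi_t$: hypotheses (i)--(iii) of Section 6.1 hold since $\phi_t$ is a bounded geodesic depending only on $\Re t$, and $e^{-\psi}$ is integrable and $t$-independent (in the divisor case one invokes Theorem 6.3 instead). This yields a holomorphic, possibly time-dependent, vector field $V_t$ with flow $F_t$ such that $F_t^*(\ddbar\Phi_t)=\ddbar\Phi_0$. Since moreover $i\ddbar\psi=\theta\ge 0$ and $i\ddbar_X\phi_t\ge 0$, the addendum Theorem 6.2 applies and gives $V_t\rfloor\theta=0$ together with the $t$-independence of $F_t^*(\ddbar\psi)$; as $F_0=\mathrm{id}$ this says $F_t^*(\theta)=\theta$ for all $t$. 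Finally I would peel off $\psi$: from $F_t^*(\ddbar\phi_t+\theta)=\ddbar\phi_0+\theta$ and $F_t^*(\theta)=\theta$ we get $F_t^*(\ddbar\phi_t)=\ddbar\phi_0$, and at $t=1$, writing $F=F_1$, this is $F^*(\ddbar\phi_1)=\ddbar\phi_0$; the flow $F_t$ exhibits $F$ as homotopic to the identity.

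The step I expect to be the main obstacle is establishing that $\F$ is affine, i.e. the one-sided derivative computation at the endpoints. The difficulty is that the geodesic is only bounded, not smooth, so $\F$ and $\E$ cannot simply be differentiated; one must instead pass through the one-sided derivative inequalities (2.3)--(2.6) of Section 2.4 and the convexity of $\phi_t$ in $t$, and it is only the criticality of the endpoints that makes the two one-sided contributions cancel. A secondary technical point, needed before Theorem 6.1 can even be invoked, is verifying that $\Phi_t=\phi_t+\psi$ genuinely falls under the decomposition hypotheses of Section 6 (boundedness of $\tau_t=\phi_t$, integrability and mild regularity of $\psi$), and that the slicewise positivity $i\ddbar_X\phi_t\ge 0$ needed for the addendum indeed holds along the bounded geodesic.
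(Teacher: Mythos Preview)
Your proposal is correct and follows essentially the same route as the paper: the discussion preceding Theorem~7.1 sets up the functional $\G_\psi$, identifies solutions of (7.2) as its critical points, joins them by a bounded geodesic, and observes (by the Section~5 argument) that $\F$ is affine, after which the paper's proof simply invokes Theorems~6.1 and~6.2. You spell out two points the paper leaves implicit---the one-sided derivative argument at the endpoints, and the subtraction $F_t^*(\ddbar\Phi_t)-F_t^*(\theta)=\ddbar\phi_0$ to pass from the conclusion of Theorem~6.1 (which concerns the full metric $\Phi_t=\phi_t+\psi$ on $-K_X$) to the stated conclusion $F^*(\ddbar\phi_1)=\ddbar\phi_0$---but these are exactly the intended steps.
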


\begin{proof}The first part  follows immediately from Theorem 6.1.  Theorem 6.2 says that $V_t\rfloor\theta=0$.  This implies that the Lie derivative of $\theta$ along $V$ vanishes, which gives the last statement.  

\end{proof}

In the same way we get from Theorem 6.4
\begin{thm} Assume that $-K_X=-(K_X+S) +S $ where $-(K_X+S)$ is
  semipositive and $S$ is the $\R$-line bundle corresponding to a klt
  divisor $\Delta\geq 0$ with simple normal crossings. Assume also that 
  $H^{0,1}(X)=0$. 
  Let
$\phi_0$ and $\phi_1$ be two bounded solutions of equation (7.2) with
$\theta=[\Delta]$ and 
with $i\ddbar\phi_j\geq 0$. Then there is a holomorphic vector field,$V$, with time one flow, $F$, such that 
$$
F^*(\ddbar\phi_1)=\ddbar\phi_0.
$$
Moreover 
$$
V_t\rfloor \Delta=0
$$
so $V_t$ is tangential to $\Delta$, and
$$
F^*([\Delta])=[\Delta].
$$
\end{thm}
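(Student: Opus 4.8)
The plan is to follow the proof of the preceding theorem of this section essentially verbatim, replacing Theorem 6.1 by its klt counterpart Theorem 6.3 and re-establishing the analogue of the addendum Theorem 6.2 by hand, since Theorem 6.3 is not stated together with such a conclusion. First I would regard the two given solutions $\phi_0,\phi_1$ of (7.2) as metrics on $F=-(K_X+S)$ and join them by a bounded geodesic $\phi_t$ with $i\ddbar_{t,X}\phi_t\geq 0$, whose existence is furnished by the barrier construction of section 2.2. Because $\phi_0$ and $\phi_1$ are critical points of $\G_\psi$ and $\E$ is affine along a geodesic, the same convexity argument used for the previous theorem shows that
$$
t\mapsto -\log\int_X e^{-(\phi_t+\psi)}
$$
is affine. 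Setting $\Phi_t:=\phi_t+\psi$, a curve of metrics on $-K_X$ of exactly the form demanded by Theorem 6.3, that theorem produces a vector field $V_t$, holomorphic on $X\times\Omega$, with flow $F_t$ such that $F_t^*(\ddbar\Phi_t)=\ddbar\Phi_0$.

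The one genuinely new ingredient is the addendum $V_t\rfloor[\Delta]=0$. I would obtain it by noting that the lemma establishing $\V\rfloor\ddbar_{t,X}\Phi=0$, for $\V=\partial/\partial t-V$, carries over without change: its proof uses only the existence of the flow $F_t$ with $F_t^*(\ddbar\Phi_t)=\ddbar\Phi_0$ and the affineness of $\F$ (to see that the constant $c(t)$ is linear), both of which are available here because $e^{-\psi}$ is integrable by the klt hypothesis. Decomposing $\ddbar\Phi=\ddbar\phi+\ddbar\psi=\ddbar\phi+[\Delta]$ into two nonnegative currents and applying $i\bar\V\wedge\V\rfloor$ exactly as in the proof of Theorem 6.2 gives
$$
0=i\bar\V\wedge\V\rfloor\ddbar\Phi=i\bar\V\wedge\V\rfloor\ddbar\phi+i\bar\V\wedge\V\rfloor[\Delta],
$$
so both nonnegative terms vanish. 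Since $\psi$ is independent of $t$, the second reduces to $i\bar V\wedge V\rfloor[\Delta]=0$, and the semidefiniteness of $[\Delta]$ then forces $V_t\rfloor[\Delta]=0$.

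With the addendum in hand I would close the argument in the same order as for the previous theorem. From $V_t\rfloor[\Delta]=0$ the Lie derivative of $[\Delta]$ along $V_t$ vanishes, whence $\frac{d}{dt}F_t^*[\Delta]=0$ and $F_t^*[\Delta]=[\Delta]$ for all $t$. Subtracting this from the identity $F_t^*(\ddbar\phi_t+[\Delta])=\ddbar\phi_0+[\Delta]$ coming from Theorem 6.3 yields $F_t^*(\ddbar\phi_t)=\ddbar\phi_0$; taking $t=1$ and $F=F_1$ gives the asserted $F^*(\ddbar\phi_1)=\ddbar\phi_0$, while $F^*[\Delta]=[\Delta]$ is the final claim. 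The reading $V_t\rfloor\Delta=0$, i.e. tangency of $V_t$ to $\Delta$, then follows from $V_t\rfloor[\Delta]=0$ using the simple normal crossings structure.

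I expect the main obstacle to lie precisely in the transfer of the addendum to the singular setting, namely in making sense of, and actually forcing, $V_t\rfloor[\Delta]=0$ as a statement about currents rather than only on the smooth locus of $\Delta$. The decomposition of $\ddbar\Phi$ into the positive currents $\ddbar\phi$ and $[\Delta]$ is what isolates the $[\Delta]$-part cleanly, but the final semidefiniteness step $i\bar V\wedge V\rfloor[\Delta]=0\Rightarrow V\rfloor[\Delta]=0$ must be checked componentwise along the normal-crossing divisor, where $[\Delta]$ is a sum of integration currents on smooth coordinate hypersurfaces and the implication reduces to a local computation.
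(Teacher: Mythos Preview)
Your proposal is correct and follows exactly the route the paper takes: the paper's proof of this theorem is literally the phrase ``In the same way we get from Theorem 6.3'', meaning one repeats the argument for Theorem~7.1 with Theorem~6.3 in place of Theorem~6.1, and you have correctly unpacked what that entails, including the need to re-run the proof of the addendum (Theorem~6.2 and its Lemma~6.4) in the klt setting and then subtract $F_t^*[\Delta]=[\Delta]$ from $F_t^*(\ddbar\Phi_t)=\ddbar\Phi_0$ to isolate the $\ddbar\phi$-part. Your caution about the semidefiniteness step $i\bar V\wedge V\rfloor[\Delta]=0\Rightarrow V\rfloor[\Delta]=0$ is well placed but not a gap: on each smooth component of the SNC divisor this is the elementary statement that a semipositive $(1,1)$-form annihilated by $i\bar V\wedge V$ is annihilated by $V$, which is all the paper is implicitly using.
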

In some cases the conclusion of Theorems 7.1 and 7.2 actually imply that $V=0$, so that $F$ is the identity map and $\omega_0=\omega_1$. Probably the simplest case of this is the following (see the next section for variants on this). We assume that $X$ is Fano so that $-K_X>0$ and then let $(S)=-rK_X$, where $0<r<1$. Then we can rewrite equation (7.1) as 
$$
Ric(\omega)=(1-r)\omega +r\theta,
$$
where $\omega$ is a K\"ahler metric in $c_1[-K_X]$ and $\theta$ also lies in that class. We choose $\theta=[(1/\lambda)\Delta]$ where $\Delta$ is a smooth connected divisor of multiplicity one,  defined by a section $s$ of $-\lambda K_X$, $\lambda$ a positive integer. Then we can take $\psi$ in Theorem 7.1 as
$$
\psi=(r/\lambda)\log|s|^2.
$$
Clearly $e^{-\psi}$ is locally integrable and it follows from Theorem 7.1 that $V$ is tangential to the divisor $\Delta$ . But this implies that $V$ must be identically zero. This was proved by Berman for $\lambda=1$ and by Song and Wang for $\lambda\geq 1$; see \cite{2Berman} and \cite{Song-Wang}. (We will also give a different proof and partial extension for the case when $\lambda>1$ in the next section.) We summarize in a theorem.

\begin{thm} Let $[\Delta]$ be a smooth connected divisor of multiplicity one  on a Fano manifold $X$, defined by a section, $s$,  of $\lambda(- K_X)$, where $\lambda $ is a positive integer. Let $\omega_0$ and $\omega_1$ be two solutions in $c_1[-K_X]$ to the twisted K\"ahler-Einstein equation
$$
Ric(\omega)=(1-t)\omega +t/\lambda[\Delta],
$$
with $0<t<1$, of the form $\omega_j=i\ddbar\phi_j$, $\phi_j$ bounded.  Then $\omega_0=\omega_1$.
\end{thm}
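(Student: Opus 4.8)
The plan is to recognize the equation as an instance of the twisted K\"ahler--Einstein equation (7.2), apply Theorem 7.1 to produce a holomorphic vector field relating $\omega_0$ and $\omega_1$, and then use the addendum Theorem 7.2 to force that field to vanish. Concretely, I would set $S:=-tK_X$ and take the singular metric $\psi:=(t/\lambda)\log|s|^2$ on $S$, so that $\theta:=i\ddbar\psi=(t/\lambda)[\Delta]\geq0$. Writing $\omega'=(1-t)\omega$, which lies in $c_1\big(-(K_X+S)\big)=(1-t)c_1(-K_X)$, the given equation becomes $\mathrm{Ric}(\omega')=\omega'+\theta$, i.e. equation (7.1); with $\omega'=i\ddbar\phi$ this is equivalent to (7.2) after adjusting constants.

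I would then verify the hypotheses of Theorem 7.1: $-K_X$ is ample, hence semipositive; $H^{0,1}(X)=0$, as is standard for Fano manifolds; $\theta$ is a positive current; and $e^{-\psi}$ is integrable, since near a point of the smooth reduced divisor $\Delta=\{s=0\}$ one has $e^{-\psi}\approx|z_1|^{-2t/\lambda}$ with $0<t/\lambda<1$. Applying Theorem 7.1 to the two bounded solutions yields a holomorphic vector field $V_t$ whose time-one flow $F$ satisfies $F^*(\ddbar\phi_1)=\ddbar\phi_0$, while Theorem 7.2 gives $V_t\rfloor\theta=0$ for all $t$, that is, $V_t$ is tangent to $\Delta$. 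Since $F^*(\ddbar\phi_1)=\ddbar\phi_0$ is equivalent to $F^*\omega_1=\omega_0$, the theorem reduces to showing that each $V_t$ vanishes identically, for then $F=\mathrm{id}$ and $\omega_0=\omega_1$.

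This vanishing is the main obstacle. Tangency to the connected reduced divisor $\Delta$ forces the flow of $V_t$ to preserve $\{s=0\}$, so $L_{V_t}s=c_t\,s$ with $c_t$ holomorphic, hence constant by compactness of $X$; in particular $V_t$ restricts to a holomorphic vector field on $\Delta$. For $\lambda>1$ one can push this further: adjunction gives $K_\Delta=\big((\lambda-1)(-K_X)\big)|_\Delta$, which is ample, so $\Delta$ is canonically polarized, has finite automorphism group, and $V_t|_\Delta=0$; hence $V_t$ vanishes to first order along $\Delta$ and defines a section of $T_X\otimes\mathcal{O}(-\Delta)$. The genuinely hard point is to conclude from here that $V_t\equiv0$ on all of $X$ (which would follow from the vanishing $H^0(X,T_X\otimes\mathcal{O}(-\Delta))=0$), together with the borderline case $\lambda=1$, where $\Delta$ is only Calabi--Yau and this shortcut is unavailable. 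This is exactly the content of the results of Berman (for $\lambda=1$) and Song--Wang (for $\lambda\geq1$) cited above, reproved for $\lambda>1$ by the independent argument of Section 8. Granting it, $V_t\equiv0$ for every $t$, the flow $F$ is the identity, and $\omega_0=\omega_1$, as claimed.
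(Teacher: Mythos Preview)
Your proposal is correct and follows essentially the same route as the paper: recast the equation as (7.1) with $S=-tK_X$ and $\psi=(t/\lambda)\log|s|^2$, invoke Theorem~7.1 to obtain the vector field $V_t$ tangent to $\Delta$, and then appeal to Berman ($\lambda=1$) and Song--Wang ($\lambda\geq 1$), with the Section~8 argument as an alternative for $\lambda>1$, to force $V_t\equiv 0$. One small remark: the tangency condition $V_t\rfloor\theta=0$ is already part of the conclusion of Theorem~7.1, so you need not invoke Theorem~7.2 separately.
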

Notice that the case $\lambda=1$ of this theorem is rather delicate. For $X$ equal to the Riemann sphere we can take the disconnected anticanonical divisor $\Delta=\{0, \infty\}$. Then clearly the conclusion of Theorem 7.3 fails as there are nontrivial automorphisms $z\mapsto az$ fixing $\Delta$. Thus the assumption of connectedness is necessary and, similarily, $\Delta=2\{0\}$ shows that we also need to assume multiplicity one. Note also, as pointed out by a referee, that $\Delta$ is automatically connected if $n>1$, as follows from the Lefschetz heyperplane theorem, \cite{Griffiths-Harris}.

 \section{ Complex
  gradients and uniqueness for twisted K\"ahler-Einstein equations} 

The main point of the proofs in the previous sections was that we found a holomorphic vector field, $V$, on $X$  satisfying
$$
V\rfloor\ddbar\phi=\dbar\dot{\phi},
$$
so that $V$ was sort of a 'complex gradient' in a rather non regular situation. This vector field also satisfied 
$$
V\rfloor\theta=0
$$
where $\theta$ is the twisting term in the twisted K\"ahler-Einstein equations. We will now discuss when this last condition forces $V$ to be zero. 
Mainly to illustrate the idea we start with a situation when the metric is smooth, but not necessarily positively curved. 
\begin{prop} Let $L$ be a holomorphic line bundle over the compact
  K\"ahler manifold $X$, and let $\psi$ be a  smooth metric on $L$, not
  necessarily with positive 
  curvature. Assume that
$$
H^{(0,1)}(X, K_X+L)=0
$$
and
$$
H^{0}(X, K_X+L)\neq 0.
$$
Assume also that $V$ is a holomorphic vector field on $X$ such that
$$
V\rfloor \ddbar\psi=0.
$$
Then $V=0$.

\end{prop}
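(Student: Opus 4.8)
The plan is to use the nonzero holomorphic section provided by $H^0(X,K_X+L)\neq 0$ to convert the purely algebraic hypothesis $V\rfloor\ddbar\psi=0$ into a statement about the operator $\partial^\psi:=\partial-\partial\psi\wedge$ of Section 3 (with $\phi_t$ replaced by the fixed smooth metric $\psi$), to which the cohomological vanishing can be applied. First I would fix $u\in H^0(X,K_X+L)$ with $u\not\equiv 0$ and set $v:=V\rfloor u$. Since $V$ and $u$ are both holomorphic, $v$ is a holomorphic $L$-valued $(n-1,0)$-form; in particular $\dbar v=0$, and $v\wedge\omega$ is $\dbar$-closed because $\dbar v=0$ and $\dbar\omega=0$ for the fixed K\"ahler form $\omega$.

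The first genuine step is an algebraic reduction. As $u$ has bidegree $(n,0)$, the expression $\ddbar\psi\wedge u$ is an $L$-valued form of bidegree $(n+1,1)$ and hence vanishes for degree reasons. Contracting with $V$, using that $\rfloor$ is an antiderivation together with the hypothesis $V\rfloor\ddbar\psi=0$, I obtain $\ddbar\psi\wedge v=0$. I would then feed this into the curvature identity $\dbar\partial^\psi+\partial^\psi\dbar=\ddbar\psi$: applied to $v$ and using $\dbar v=0$ and $\ddbar\psi\wedge v=0$, it gives $\dbar(\partial^\psi v)=0$. Thus $\partial^\psi v$ is a $\dbar$-closed $L$-valued $(n,0)$-form, i.e. a genuine holomorphic section of $K_X+L$.

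The crux is then to show $\partial^\psi v=0$. Recall from Section 3 that $\partial^\psi v=\pm\dbar^*_\psi(v\wedge\omega)$ lies in the image of $\dbar^*_\psi$, hence is orthogonal, in the $\psi$-weighted $L^2$ inner product, to every holomorphic section: for holomorphic $s$ one has $\langle\partial^\psi v,s\rangle=\pm\langle v\wedge\omega,\dbar s\rangle=0$. But $\partial^\psi v$ is itself holomorphic by the previous step, so pairing it with itself forces $\|\partial^\psi v\|^2_\psi=0$ and therefore $\partial^\psi v=0$. Now the cohomological hypothesis enters through the identification $H^{0,1}(X,K_X+L)=H^{n,1}(X,L)=0$, which (as noted in Section 3) makes $\partial^\psi$ injective on $(n-1,0)$-forms $\gamma$ with $\gamma\wedge\omega$ $\dbar$-closed. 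Since $v\wedge\omega$ is $\dbar$-closed and $\partial^\psi v=0$, injectivity yields $v=0$, that is $V\rfloor u=0$. Finally, at any point where $u\neq0$ this forces $V=0$, since contracting a nonvanishing $(n,0)$-form with a $(1,0)$-vector can only give zero when the vector vanishes; as the zero set of $u$ is a proper analytic subset and $V$ is holomorphic, we conclude $V\equiv0$.

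I expect the main obstacle to be the middle argument pinning down $\partial^\psi v$: one must see simultaneously that $\partial^\psi v$ lies in the kernel of $\dbar$ (via the curvature identity) and in the orthogonal complement of that kernel (via its representation through $\dbar^*_\psi$), which together force it to vanish, and then verify that $v\wedge\omega$ meets the $\dbar$-closedness condition required to invoke injectivity of $\partial^\psi$. It is worth noting that positivity of $\ddbar\psi$ is never used, which is exactly what allows the statement to hold for an arbitrary smooth metric.
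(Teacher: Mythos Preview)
Your proof is correct and follows essentially the same route as the paper's: define $v=V\rfloor u$, use $V\rfloor\ddbar\psi=0$ to get $\ddbar\psi\wedge v=0$ and hence $\dbar(\partial^\psi v)=0$, then conclude $\partial^\psi v=0$ and finally $v=0$ via the cohomological vanishing. The only cosmetic difference is that you invoke the orthogonality and injectivity statements from Section~3, whereas the paper writes out the two underlying integrations by parts explicitly (namely $\int\partial^\psi v\wedge\overline{\partial^\psi v}\,e^{-\psi}=\pm\int\dbar\partial^\psi v\wedge\bar v\,e^{-\psi}=0$ and, with $v\wedge\omega=\dbar w$, $\int v\wedge\bar v\wedge\omega\,e^{-\psi}=\int v\wedge\overline{\dbar w}\,e^{-\psi}=0$).
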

\begin{proof} We follow the arguments in section 3. Let $u$ be a
  global holomorphic section of $K_X+L$, and put
$$
v:=V\rfloor u.
$$
Then $v$ is a holomorphic, $L$-valued, $(n-1,0)$-form and
$$
\ddbar\psi\wedge v=-(V\rfloor\ddbar\psi)\wedge u=0.
$$
Hence
$$
\dbar\partial^\psi v=-\partial^\psi\dbar v=0.
$$
This implies that
$$
\int \partial^\psi v\wedge\overline{\partial^\psi v}e^{-\psi}=
\pm\int \dbar\partial^\psi v\wedge\overline{ v}e^{-\psi}=0.
$$
Hence $\partial^\psi v=0$. Moreover, our assumption that $H^1$ vanishes implies that the $\dbar$-closed form $v\wedge\omega=\dbar w$ for some $(n,0)$-form $w$. 
Therefore 
$$
\int v\wedge\bar v\wedge\omega e^{-\psi}=\int v\wedge\overline{\dbar w} e^{-\psi}=0.
$$
Thus $v$ and $V$ must be zero which completes the proof. 
\end{proof}

\bigskip

We shall next see that the same conclusion holds if we only assume that our metric is such that $e^{-\psi}$ is locally integrable if we assume that the curvature current is semipositive. 
 \begin{prop} Let $L$ be a holomorphic line bundle over the compact
  K\"ahler manifold $X$, and let $\psi$ be a   metric on $L$ such that $i\ddbar\psi\geq 0$ and $e^{-\psi}$ is locally integrable. Assume that
$$
H^{(0,1)}(X, K_X+L)=0
$$
and
$$
H^{0}(X, K_X+L)\neq 0.
$$
Assume also that $V$ is a holomorphic vector field on $X$ such that
$$
V\rfloor \ddbar\psi=0.
$$
Then $V=0$.
\end{prop}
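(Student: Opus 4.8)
The plan is to follow the proof of the preceding (smooth) Proposition almost verbatim at the formal level, and to isolate the one analytic point that the klt singularity of $\psi$ introduces. Let $u$ be a nonzero global section of $K_X+L$ and set $v:=V\rfloor u$, a holomorphic, $L$-valued $(n-1,0)$-form; in particular $v$ is smooth. From $V\rfloor\ddbar\psi=0$ we obtain, exactly as before, that $\ddbar\psi\wedge v=-(V\rfloor\ddbar\psi)\wedge u=0$ as currents. The first, and pleasantly clean, observation is that this forces $\dbar(\partial^\psi v)=\ddbar\psi\wedge v=0$ in the sense of currents, so that $\partial^\psi v$ is a $\dbar$-closed $(n,0)$-current with values in $L$. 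By ellipticity of $\dbar$ it is therefore represented by a \emph{holomorphic} section of $K_X+L$; in particular $\partial^\psi v$ is smooth, and since $X$ is compact and $e^{-\psi}$ is integrable it has finite norm in $L^2(e^{-\psi})$. This removes at the outset the worry about whether $\partial^\psi v$ makes sense in $L^2$ against the singular weight.

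The argument then closes by two applications of Hilbert space orthogonality in $L^2(e^{-\psi})$, mirroring the two integrations by parts of the smooth case. First, $\partial^\psi v$ equals, up to sign, $\dbar^*_\psi(v\wedge\omega)$ and so lies in the range of $\dbar^*_\psi$, which is contained in the orthogonal complement of $\ker\dbar$; but we have just seen that $\partial^\psi v$ is itself holomorphic, hence in $\ker\dbar$, so $\|\partial^\psi v\|^2_\psi=0$ and $\partial^\psi v=0$. Second, the hypothesis $H^{0,1}(X,K_X+L)=0$ makes the $\dbar$-closed form $v\wedge\omega$ exact, and the $L^2$-solvability lemma of Section 6 (solving $\dbar w=v\wedge\omega$ with an $L^2(e^{-\psi})$ bound, which applies because $i\ddbar\psi\geq0$ and $\int e^{-\psi}<\infty$) furnishes a primitive $w\in L^2(e^{-\psi})$ with $\dbar w=v\wedge\omega$. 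Thus $v\wedge\omega$ lies in the range of $\dbar$, orthogonal to $\ker\dbar^*_\psi$; since the previous step gives $\dbar^*_\psi(v\wedge\omega)=0$, the form $v\wedge\omega$ also lies in $\ker\dbar^*_\psi$, whence $v\wedge\omega=0$, $v=0$, and finally $V=0$.

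The hard part is the single ingredient that both orthogonality statements silently use: that the smooth bounded form $v\wedge\omega$ genuinely belongs to $\Dom(\dbar^*_\psi)$, with Hilbert-space adjoint equal to the formal expression $\pm\partial^\psi v$. Equivalently, the integration by parts $\langle v\wedge\omega,\dbar\phi\rangle_\psi=\langle\pm\partial^\psi v,\phi\rangle_\psi$ must hold with no residual contribution from the polar locus $\{\psi=-\infty\}$, and this is exactly where the curvature assumption $i\ddbar\psi\geq0$ — absent from the smooth Proposition — becomes indispensable, since near the polar set the naive error terms involve $\partial\psi\,e^{-\psi}$, which need not be integrable. I would establish the domain membership by approximating $\psi$ from above by a decreasing sequence of smooth metrics $\psi_\nu$ with $i\ddbar\psi_\nu\geq-\epsilon_\nu\omega$ as in Section 2.3, running the smooth identity for each $\psi_\nu$, and passing to the limit; the semipositivity together with the integrability $\int e^{-\psi}<\infty$ supplies the uniform bounds, and the pluripolarity of the polar set — hence its negligibility for the $L^2$ objects involved, as in Section 6 — lets one discard its contribution. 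The genuinely delicate estimate is to show that the weighted mass $\int\ddbar\psi_\nu\wedge v\wedge\overline v\,e^{-\psi_\nu}$ tends to zero, knowing only that the limit current $\ddbar\psi\wedge v$ vanishes; here one must combine the exact directional vanishing $\ddbar\psi\wedge v=0$ with the klt integrability to prevent mass from concentrating on the polar locus, where the weight blows up.
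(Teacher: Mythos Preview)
Your overall architecture matches the paper's proof of Proposition~8.2 exactly: the two orthogonality steps you describe are precisely the two integrations by parts the paper performs, and you have correctly isolated the one genuine analytic issue, namely that the formal identity $\partial(e^{-\psi}v)=e^{-\psi}\partial^\psi v$ in the sense of currents (equivalently, that $v\wedge\omega\in\Dom\dbar^*_\psi$ with the expected adjoint) must be justified when $\psi$ is singular.

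Where you diverge is in how to close this gap, and your proposed route is both harder and slightly off-target. The paper packages the needed fact as a separate lemma (Lemma~8.3) and proves it by a clean trick that avoids any curvature-type estimate: (i) for a plurisubharmonic $\psi$ one has $\partial\psi\in L^p_{\mathrm{loc}}$ for every $p<2$, so if $e^{-\psi}\in L^q_{\mathrm{loc}}$ for some $q>2$ the identity follows by H\"older and dominated convergence along a smooth decreasing approximation $\psi_\nu\searrow\psi$; (ii) to reach the general klt case, replace $\psi$ by $t\psi$ and observe that both sides of the identity are real-analytic in $t$ as currents for $0<t<1$, so validity for $t<1/2$ (where $e^{-2t\psi}$ is integrable) propagates to all $t<1$, and then to $t=1$ by a limit. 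Thus the quantity one actually needs to control is $\partial\psi_\nu\,e^{-\psi_\nu}$ in $L^1$, not the Hessian-weighted mass $\int\ddbar\psi_\nu\wedge v\wedge\bar v\,e^{-\psi_\nu}$ you were worried about; that estimate is neither needed nor obviously available. A minor further simplification: since $H^{0,1}(X,K_X+L)=0$ gives a \emph{smooth} primitive $w$ with $\dbar w=v\wedge\omega$, the paper never needs Lemma~6.3 or any $L^2(e^{-\psi})$ bound on $w$; once $\partial^\psi v=0$ is known, the second integration by parts uses only the current identity of Lemma~8.3 against a smooth test form.
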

For the proof we need a technical lemma which is a little bit more delicate than it seems at first glance. Recall that when $\psi$ is smooth we have defined the expression $\partial^\psi v$ as
$$
\partial^\psi v=e^{\psi}\partial e^{-\psi} v=\partial v-\partial\psi\wedge v.
$$
More exactly, this means that these relations hold in any local trivialization.

\bigskip

As it stands, the first of these formulas does not make sense if $\psi$ is allowed to be singular. Therefore we {\it define}, for $v$ smooth
and $\psi$ singular
\be
\partial^\psi v=\partial v-\partial\psi\wedge v.
\ee
The next lemma says that the  formula used above continues to hold in the singular case if we assume that $e^{-\psi}$ is locally integrable and the weight is plurisubharmonic. (But not in general, see below!) 

\begin{lma} Let $\psi$ be a plurisubharmonic function in an open set in $\C^n$ such that $e^{-\psi}$is locally integrable. Let $v$ be a smooth differential form such that $\partial^\psi v$, as defined in 8.1,  is also smooth. Then 
$$
e^{-\psi}\partial^\psi v=\partial e^{-\psi} v
$$
in the sense of currents.
\end{lma}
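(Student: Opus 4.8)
The plan is to reduce everything to the ordinary Leibniz rule for a current multiplied by a smooth form, and then to control a single genuinely singular term by approximation. The statement is local, so I work in a coordinate ball and regard both sides as currents. The first observation, which is the real reason the hypotheses are natural, is that $\partial\psi\wedge v$ is itself \emph{smooth}: indeed $\partial\psi\wedge v=\partial v-\partial^\psi v$, and both $\partial v$ and $\partial^\psi v$ are smooth by assumption. Since $v$ is smooth and $e^{-\psi}\in L^1_\loc$, the product rule $\partial(e^{-\psi}v)=\partial(e^{-\psi})\wedge v+e^{-\psi}\partial v$ holds verbatim as an identity of currents (multiplication of the current $e^{-\psi}$ by the smooth form $v$ commutes with $\partial$ up to the usual sign). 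Subtracting $e^{-\psi}\partial v$ from the asserted identity, the whole lemma is therefore equivalent to the single relation $\partial(e^{-\psi})\wedge v=-e^{-\psi}\,(\partial\psi\wedge v)$, whose right-hand side is a bona fide $L^1_\loc$ form precisely because $\partial\psi\wedge v$ is smooth.

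To prove this relation I would approximate $\psi$ from above by a decreasing family of smooth plurisubharmonic functions $\psi_\epsilon\downarrow\psi$ (standard radial mollification). For smooth weights the relation is classical, $\partial(e^{-\psi_\epsilon})\wedge v=-e^{-\psi_\epsilon}(\partial\psi_\epsilon\wedge v)$. Now the local integrability of $e^{-\psi}$ enters: since $\psi_\epsilon\ge\psi$ we have $e^{-\psi_\epsilon}\uparrow e^{-\psi}$, so by monotone convergence $e^{-\psi_\epsilon}\to e^{-\psi}$ in $L^1_\loc$. Consequently $e^{-\psi_\epsilon}v\to e^{-\psi}v$ in $L^1_\loc$ and hence $\partial(e^{-\psi_\epsilon})\wedge v\to\partial(e^{-\psi})\wedge v$ in the sense of currents; likewise $e^{-\psi_\epsilon}\partial v\to e^{-\psi}\partial v$. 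Thus the left-hand sides converge to the objects I want, while passing to a subsequence gives $\partial\psi_\epsilon\wedge v\to\partial\psi\wedge v$ almost everywhere and $e^{-\psi_\epsilon}\uparrow e^{-\psi}$, so the right-hand sides converge pointwise a.e. to $-e^{-\psi}(\partial\psi\wedge v)$. The lemma is therefore exactly the assertion that this weak (current) limit coincides with the pointwise limit — in other words that no mass of $e^{-\psi_\epsilon}(\partial\psi_\epsilon\wedge v)$ escapes to the polar set $\{\psi=-\infty\}$ in the limit.

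The one genuinely delicate point, and the step I expect to be the main obstacle, is ruling out this escape of mass; concretely one must show $\int_K e^{-\psi_\epsilon}\,\bigl|\partial\psi_\epsilon\wedge v-\partial\psi\wedge v\bigr|\to0$ on compact $K$. A crude bound such as $|\partial\psi_\epsilon\wedge v|\le C|\partial\psi_\epsilon|$ is hopeless, because neither $e^{-\psi}$ nor $|\partial\psi|$ is bounded near the singular set, so the cancellation hidden in the hypothesis that $\partial\psi\wedge v$ is smooth must be exploited. My plan here is to write $\partial\psi_\epsilon\wedge v=(\partial\psi\wedge v)*\rho_\epsilon+C_\epsilon$, where $C_\epsilon=(\partial\psi*\rho_\epsilon)\wedge v-(\partial\psi\wedge v)*\rho_\epsilon$ is a Friedrichs-type commutator of the mollification with multiplication by the variable-coefficient smooth form $v$. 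Since $g:=\partial\psi\wedge v$ is smooth, the main term $g*\rho_\epsilon\to g$ uniformly and integrates harmlessly against the $L^1_\loc$ weight $e^{-\psi}$; everything is then reduced to showing that the commutator $C_\epsilon$ is negligible in $L^1(e^{-\psi_\epsilon})$ locally. This is exactly the place where both hypotheses are indispensable — the smoothness of $\partial\psi\wedge v$ expresses that $v$ tames the singularity of $\partial\psi$ in the relevant directions, and the integrability of $e^{-\psi}$ bounds the weight — and it is also where the identity breaks down in general, which is the content of the counterexample the author alludes to. Once the commutator is controlled, matching the weak limit with the pointwise limit yields $\partial(e^{-\psi})\wedge v=-e^{-\psi}(\partial\psi\wedge v)$ and hence the lemma.
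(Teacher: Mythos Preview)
Your reduction is correct and matches the paper: both of you show the lemma is equivalent to $\partial(e^{-\psi})\wedge v=-e^{-\psi}(\partial\psi\wedge v)$, and both approximate $\psi$ by smooth plurisubharmonic $\psi_\epsilon\downarrow\psi$. The observation that $\partial\psi\wedge v$ is smooth is also in the paper, implicitly.

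The gap is exactly where you flag it: you do not actually control the commutator $C_\epsilon$ in $L^1(e^{-\psi_\epsilon})$. The Friedrichs lemma gives $C_\epsilon\to 0$ in $L^p_{\loc}$ for the same $p$ in which $\partial\psi$ lives, namely $p<2$ (this is the standard gradient estimate for plurisubharmonic functions). To integrate $|C_\epsilon|$ against $e^{-\psi_\epsilon}$ via H\"older you would need $e^{-\psi}\in L^{p'}_{\loc}$ with $p'>2$, whereas the hypothesis only gives $L^1$. The pointwise bound $|C_\epsilon|\le C\epsilon\,(|\partial\psi|*\rho_\epsilon)$ does not help either, for the same reason. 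So as written your argument works only under the stronger assumption $e^{-\psi}\in L^q_{\loc}$ for some $q>2$, and the passage to the borderline $L^1$ case is genuinely missing.

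The paper closes this gap by a different idea. It first observes, exactly as above, that the desired identity holds whenever $e^{-\psi}\in L^q_{\loc}$ for some $q>2$ (the paper uses $\partial\psi_\nu\to\partial\psi$ in $L^p$, $p<2$, and H\"older directly rather than a commutator, but this is the same mechanism). Now if $e^{-\psi}\in L^1_{\loc}$ then $e^{-t\psi}\in L^{1/t}_{\loc}$, so for $0<t<1/2$ the identity $v\wedge\partial e^{-t\psi}=-t\,v\wedge\partial\psi\, e^{-t\psi}$ is already proved. Both sides are real-analytic in $t$ as currents for $0<t<1$, so the identity persists on that whole interval, and letting $t\uparrow 1$ gives the lemma. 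This analytic-continuation trick in the auxiliary parameter $t$ is the missing ingredient in your proposal.
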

\begin{proof}
What we need to check is that 
$$
v\wedge \partial e^{-\psi}=-(v\wedge \partial \psi) e^{-\psi}
$$
if $v$ and $-v\wedge \partial \psi$ are smooth so that the expressions above are well defined. Since the statement is purely local we can take a sequence of smooth plurisubharmonic functions $\psi_\nu=\psi\ast \chi_\nu$, where $\chi_\nu$ is a sequence of radial approximations of the identity, that decrease  to $\psi$. The left hand side is then the limit in the sense of currents of
$$
-v\wedge \partial\psi_\nu e^{-\psi_\nu}
$$
and the right hand side is the limit of 
$$
-v\wedge \partial\psi e^{-\psi_\nu}.
$$
We have to prove that these two limits are equal.
\begin{lma} If $\psi$ is plurisubharmonic, then 
$\partial\psi$ lies in $L^p$ locally for any $p<2$. With $\psi_\nu$ as above $\partial\psi_\nu$ tends to $\partial\psi$ in $L^p_{loc}$ for any $p<2$.
\end{lma} 
\begin{proof}For any compact $K$, there is an $\epsilon$ such that $e^{-\epsilon\psi}$ is integrable over $K$, see \cite{Hormander}. Moreover, since 
$$
i\ddbar e^{\epsilon\psi}\geq \epsilon^2 e^{\epsilon\psi} i\partial\psi\wedge\dbar\psi,
$$
we see that $|\partial\psi|^2 e^{\epsilon\psi}$ is locally integrable for any $\epsilon>0$. Therefore H\"older's inequality implies that  $|\partial\psi|^p$ is locally integrable for any $p<2$. Hence 
$$
\partial\psi_\nu=\partial\psi\ast \chi_\nu
$$
tend to $\partial\psi$ in $L^p_{loc}$.
\end{proof}

Let us now first assume that $e^{-\psi}$ is not only integrable locally, but lies in $L^q$ locally for some $q>2$. Then the conclusion of the lemma follows from the convergence of $\partial\psi_v$ in $L^p$ and of $e^{-\psi_\nu}$ in $L^q$. 

 This means that under the assumption in the lemma we have proved that
$$
v\wedge \partial e^{-t\psi}=-tv\wedge \partial \psi e^{-t\psi},
$$
if $t$ is between zero and one half. But both sides are real analytic functions of $t$ with values in the space of currents, for $t<1$. Therefore the same formula holds for any $t$ less than one and we only need to take limits (still in the space of currents) as $t$ tends to one.
\end{proof}

\noindent{\bf Example:} Let $\psi=\log |z|^2$ in $\C$ and let $v=z$. The $v$ is smooth and $\partial^\psi v=0$. On the other hand 
$$
\partial e^{-\psi} v=\partial \frac{1}{\bar z}=\delta_0 dz\neq 0.
$$
This shows that the assumption of local integrability of $e^{-\psi}$ is essential. Otherwise the two sides do not need to be equal even if they are well defined.
\qed

\bigskip

\noindent {\bf Proof of Proposition 8.2:} Given the lemma, this proceeds just like the proof of Proposition 8.1. Take $u$ a holomorphic section of $K_X+L$, and let $v=V\rfloor u$. As before $v$ is holomorphic and $\partial^\psi v$ is also holomorphic. In particular, booth forms are smooth so we can apply the lemma. First
$$
c_n\int \partial^\psi v\wedge\overline{\partial^\psi v}e^{-\psi}=\int \partial^\psi v\wedge\dbar (e^{-\psi}\bar v)= \int \dbar\partial^\psi v\wedge\bar v e^{-\psi}=0.
$$
Hence $\partial^\psi v=0$.  Then, invoking the K\"ahler form $\omega$, if $v\wedge\omega =\dbar w$,
$$
c_{n-1}\int v\wedge \bar v\wedge \omega e^{-\psi}=
c_{n-1}\int v\wedge \overline{\dbar w} e^{-\psi}=
\pm\int \partial^{\psi} v\wedge \bar w e^{-\psi}=0.
$$
Hence $v$ and therefore $V$ vanish.
\qed

To have an example of the situation in Proposition 8.2, look at a smooth divisor, $\Delta$  defined by a section $s$ of a multiple $\lambda L$ of $L$. Let $ \psi=(1/\lambda)\log |s|^2$. Then $\psi$ satisfies the assumption of Proposition 8.2 if $\lambda >1$. This means that any holomorphic vector field that is tangential to $\Delta$ (in particular, vanishing on $\Delta$) must vanish, cf \cite{Song-Wang}. As reflected by the example above, this is not true if $\lambda =1$. For an example of this take a field $V$ on the Riemann sphere that vanishes at zero and infinity. Concretely,  $z\partial/\partial z$ on $\C$ extends to such a field. Here $L=-K_{\Po^1}$ . The cohomological assumptions of Proposition 8.2 are satisfied, but the conclusion fails if $\psi$ is the metric on $-K_{\Po^1}=O(2)$, that extends $\log |z|^2$ on $\C$. However, Song-Wang in the reference above and also Berman, \cite{2Berman} have proved that the conclusion does hold on a Fano manifold for $L=-K_X$ for an anticanonical divisor, provided the divisor is smooth, connected of multiplicity one. This does not seem to follow from our propositions.

\subsection{Meromorphic vector fields}
In the next section we will need an extension of the results of the previous section when $V$ is only known to be meromorphic. In this case we cannot expect anything as precise as Proposition 8.2, even if the poles of $V$ lie outside of the support of $\ddbar\psi$. Let for example $\psi$  be a metric on the anticanonical bundle of the Riemann sphere (i e on $O(2)$) that equals
$$
\psi'=\frac{2}{N}\sum_1^N\log |z-a_i|^2
$$
on $\C$. Since $\psi'$ grows like $2\log|z|^2$ at infinity, infinity is outside the support of $\ddbar\psi$. Let
$$
V=\Pi_1^n (z-a_i)\frac{\partial}{\partial z}
$$
on $\C$; it extends to a meromorphic field with pole at infinity. Thus the conclusion of Proposition 8.2 fails even though $e^{-p\psi}$ is integrable for $p<N/2$. On the other hand, we shall now see that if $L$ is ample, and $e^{-k\psi}$ is integrable for {\it all} $k$, the proposition 8.2 holds even for a meromorphic field.

 \begin{prop} Let $L$ be an ample  holomorphic line bundle over the compact
  K\"ahler manifold $X$, and let $\psi$ be a  metric on $L$ such that $i\ddbar\psi\geq 0$ and $e^{-k\psi}$ is locally integrable for all $k>0$.
Let  $V$ be a meromorphic vector field on $X$ such that
\be
V\rfloor \ddbar\psi=0
\ee
outside the poles of $V$. 
Then $V=0$.
\end{prop}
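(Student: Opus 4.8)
The plan is to reduce the statement to the already–established holomorphic case, Proposition 8.2, by passing to a high tensor power of $L$. The meromorphic field $V$ fails to be holomorphic only along its polar divisor $P$, where it has a pole of some finite order $p$; equivalently, $V$ is a holomorphic section of $T_X$ twisted by $pP$. The point is that if I contract $V$ with a holomorphic section $u$ of $K_X+mL$ that vanishes to order at least $p$ along $P$, the twist cancels and $v:=V\rfloor u$ becomes a \emph{genuinely holomorphic} $mL$-valued $(n-1,0)$-form. The idea is then to run the argument of Proposition 8.2 with $(L,\psi)$ replaced by $(mL,m\psi)$, using that $e^{-m\psi}$ is still locally integrable by the hypothesis that $e^{-k\psi}$ is integrable for every $k$.

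First I would fix $m$ so large that $K_X+mL-pP$ is ample; this is possible because $L$ is ample. For such $m$, Kodaira vanishing (as $mL$ is ample) gives $H^{0,1}(X,K_X+mL)=0$, and $H^0(X,K_X+mL)\neq 0$, so the cohomological hypotheses that had to be \emph{assumed} in Propositions 8.1 and 8.2 now come for free. Choosing $u\in H^0(X,K_X+mL-pP)$ and viewing it as a section of $K_X+mL$ vanishing along $pP$, the form $v=V\rfloor u$ is holomorphic, and, exactly as in Proposition 8.1, one obtains $\ddbar(m\psi)\wedge v=-m\,(V\rfloor\ddbar\psi)\wedge u$. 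Off $P$ this vanishes by hypothesis; since $v$ is holomorphic and $u$ vanishes along $P$ to the order of the pole, I expect this to force $\ddbar(m\psi)\wedge v=0$ as a current on all of $X$.

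Next I would repeat the two integration–by–parts steps of Proposition 8.2 with $\psi$ replaced by $m\psi$. Lemma 8.3 applies to $m\psi$ since it is plurisubharmonic with $e^{-m\psi}$ locally integrable, so $\ddbar(m\psi)\wedge v=0$ first yields $\partial^{m\psi}v=0$, and then, writing $v\wedge\omega=\dbar w$ (possible because $H^{0,1}(X,K_X+mL)=0$), one gets $\int v\wedge\bar v\wedge\omega\,e^{-m\psi}=0$, whence $v=0$. Thus $V\rfloor u=0$ for every $u\in H^0(X,K_X+mL-pP)$. Since $K_X+mL-pP$ is very ample for $m$ large, its sections separate points away from $P$, so for each $x\notin P$ there is such a $u$ with $u(x)\neq 0$; as contraction of a nonzero top form with a vector is injective, $V\rfloor u=0$ forces $V(x)=0$. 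Hence $V$ vanishes on the complement of $P$, and being meromorphic it vanishes identically.

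The crux is the interplay between the two competing constraints on $m$. Killing the poles of $V$ forces $m$ to be large (at least enough that $K_X+mL-pP$ is effective and base–point free off $P$), while the integration–by–parts argument requires $e^{-m\psi}$ to remain integrable for that \emph{same} $m$. This is precisely why integrability for \emph{all} $k$ is indispensable rather than for a single exponent: the example preceding the proposition shows that at the threshold value of $m$ the two requirements collide, and there $V\neq 0$. The remaining technical points to check carefully are the bundle bookkeeping that makes $V\rfloor u$ holomorphic (matching the vanishing order of $u$ to the pole order of $V$) and the extension of $\ddbar(m\psi)\wedge v=0$ across $P$; the applicability of Lemma 8.3 to the scaled weight $m\psi$ is immediate, since $m\psi$ satisfies the same hypotheses as $\psi$.
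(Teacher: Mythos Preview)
Your approach is essentially the paper's: clear the poles of $V$ by taking $u$ a section of $K_X+mL$ that vanishes sufficiently along the polar divisor, so that $v=V\rfloor u$ is holomorphic, and then rerun Proposition~8.2 with $(L,\psi)$ replaced by $(mL,m\psi)$. Your extra care about Kodaira vanishing and base-point-freeness is fine (the paper simply takes one nonzero $u'$ and concludes by meromorphy of $V$).

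The one place your justification is not quite right is the extension of $\ddbar(m\psi)\wedge v=0$ across the polar divisor $P$. The vanishing of $u$ along $P$ is what makes $v$ holomorphic, but it does \emph{not} by itself force the current $\ddbar\psi\wedge v$ to vanish on $P$: since $v$ is smooth, $\ddbar\psi\wedge v$ is a well-defined current on all of $X$, zero off $P$, hence supported on $P$; the issue is whether $i\ddbar\psi$ can charge a divisor. The paper's point is that it cannot, precisely because $e^{-k\psi}$ is integrable for every $k>0$ (so $\psi$ has zero Lelong numbers everywhere). Thus the hypothesis ``integrable for all $k$'' is used \emph{twice}: once so that $e^{-m\psi}$ stays integrable at the large $m$ needed to kill the poles, and once so that $i\ddbar\psi$ has no divisorial part. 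Your write-up only identifies the first use.
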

\begin{proof}
Since $V$ is meromorphic, there is a holomorphic section $s$ of some holomorphic line bundle $(S)$ such that $sV$ is holomorphic. Taking $k$ sufficiently large we can, since $L$ is ample, find a nontrivial holomorphic section $u'$ of $K_X+kL-(S)$. Let $u=su'$. Then $u$ is a holomorphic section of 
$K_X+kL$, and $v:=V\rfloor u$ is also holomorphic.  As before, the condition 8.3 implies that  $\ddbar\psi\wedge v$ is zero outside of the polar divisor. Therefore it vanishes everywhere since $\ddbar\psi$ cannot charge any divisor if $e^{-k\psi}$ is integrable for all $k$. We can then repeat the proof of Proposition 8.2 word for word, if we replace $L$ by $kL$.
\end{proof}

\section{A variant of Theorem 1.2 for other line bundles than $-K_X$}

In this section we consider a general semipositive line bundle $L$ over $X$, and the space of holomorphic sections $H^0(X,K_X+L)$. First we assume that this space is nontrivial, but later we will even assume that $K_X+L$ is base point free, i e that the elements of  $H^0(X,K_X+L)$ have no common zeroes. Let $\Omega$ be an open set in $\C$; it will later be the strip $\{t; 0<\Re t<1\}$, and let $\phi_t$ for $t$ in $\Omega$ be a subgeodesic (see section 2.2) in the space of metrics on $L$. As  explained in the beginning of section 3 we then get a trivial vector bundle 
$E$ over $\Omega$ with fiber  $H^0(X,K_X+L)$ with norm
$$
\|u\|_t^2=\int_X |u|^2 e^{-\phi_t}.
$$
In case $\phi_t$ is smooth, the curvature of this metric is given by Theorem 3.1. From this formula we see that the curvature $\Theta$ is nonnegative, and if for some $u$ in  $H^0(X,K_X+L)$, $\Theta u=0$ at a point $t$, then $v_t$ is holomorphic. We can then follow the same route as before and define a vector field $V$ by
$$
V\rfloor u=v_t.
$$
In section 3 we looked at the case when $L=-K_X$, in which case $K_X+L$ is trivial, so a holomorphic section has no zeros, and it follows that $V$ is a holomorphic field. For general $L$, $u$ will have zeros, so $V$ is a priori only meromorphic. If $\phi_t$ is smooth and has strictly positive curvature this is not a serious problem since
$$
V\rfloor \ddbar\phi_t=\dbar\dot{\phi_t}
$$
so $V$ is smooth and therefore must after all be holomorphic. Therefore the arguments of section 3 lead to the conclusion that if $E$ does not have strictly positive curvature then the change of metric must be given by the flow of a holomorphic vector field; see \cite{2Berndtsson}. 

If e g the subgeodesic is only bounded this argument does not work. Nevertheless we can by adopting the methods of section 4  get the same conclusion if we assume that the curvature is not only degenerate, but vanishes identically. Since the curvature is always nonnegative the assumtion amounts to saying that the trace of the curvature vanishes. This is the same as saying that the determinant of $E$ has zero curvature. Yet another way of saying the same thing is in terms of the function
$$
\mathcal{L}(\phi_t):=\log \text{Vol}(B_t),
$$
where $B_t$ is the unit ball in $E_t$, and the volume $\text{Vol}$ is computed with respect to some fixed Lebesgue measure on  $H^0(X,K_X+L)$. 
It follows from Theorem 3.1 that $\mathcal{L}(\phi_t)$ is concave along a subgeodesic and the curvature of $E$ is zero if and only if  $\mathcal{L}(\phi_t)$ is affine.

\begin{thm} Assume that
$K_X+L$
is base point free and that
$$
H^1(X,K_X+L)=0.
$$
Assume that $\phi_t$ is a bounded subgeodesic in the space of metrics on $L$ which is independent of the imaginary part of $t$. Then, if $\mathcal{L}(\phi_t)$ is affine, there is a holomorphic vector field $V$ on $X$ with flow $F_t$ such that 
$$
F_t^*(\ddbar\phi_t)=\ddbar\phi_0.
$$
\end{thm}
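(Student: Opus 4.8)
The plan is to run the arguments of sections 3 and 4 for the constant sections of $E$, and then to convert the resulting family of $L$-valued forms into a genuine vector field by exploiting base point freeness.

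First I would record that, as noted just before the statement, $\mathcal{L}(\phi_t)$ being affine is equivalent to the vanishing of the \emph{full} curvature $\Theta$ of $E$: the curvature is semipositive by Theorem 3.1, so the vanishing of its trace --- which is the affineness of $t\mapsto\log\det H_t$ for the metric $H_t$ of $E$ in a fixed basis, i.e. of $\mathcal{L}$ --- forces $\Theta\equiv 0$. Fix now an arbitrary $u\in H^0(X,K_X+L)$ and regard it as the constant holomorphic section $u_t\equiv u$ of $E$. Since the metrics are only bounded I would approximate $\phi$ by smooth $\phi^\nu$ with $i\ddbar\phi^\nu\geq-\epsilon_\nu\omega$ as in section 2.3, solve (3.1) for each $\nu$ with the uniform $L^2$-bound of Remark 2 (available because the $\phi^\nu$ are of comparable size), and pass to a weak $L^2$-limit $v=v_u$. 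Exactly as in section 4, $\Theta^\nu\to0$ makes the $\|\dbar_X v^\nu\|^2$-term of the curvature formula (3.2) tend to zero, so $\dbar_X v_u=0$; the vanishing of the first term of (3.2) yields both $\ddbar_X\phi\wedge v_u=\dbar_X\dot\phi\wedge u$ and $\mu=0$, and then Lemmas 4.1 and 4.2 give that $v_u$ is holomorphic in $t$ as well. Thus for every $u$ I obtain an $L$-valued $(n-1,0)$-form $v_u$, holomorphic on $X\times\Omega$, depending linearly on $u$, with $\ddbar_X\phi\wedge v_u=\dbar_X\dot\phi\wedge u$.

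The new point, compared with the case $L=-K_X$ of section 3, is that $u$ now has zeros, so the naive formula $V\rfloor u=-v_u$ only defines a meromorphic field. To build an honest field I would use base point freeness: at each $p\in X$ choose $u$ with $u(p)\neq0$; since $u(p)$ generates the line $(K_X+L)_p$ and contraction identifies $T^{1,0}_pX$ with $\mathrm{Hom}\bigl((K_X+L)_p,(\Lambda^{n-1}T^*X\otimes L)_p\bigr)$, there is a unique tangent vector $V(p)$ with $V(p)\rfloor u(p)=-v_u(p)$. This $V$ would then be a holomorphic vector field on $X$, holomorphic in $t$ by the previous paragraph, with $V\rfloor i\ddbar\phi=\pm i\dbar\dot\phi$ --- \emph{provided} $V(p)$ is independent of the chosen section, equivalently that $u(p)=0$ forces $v_u(p)=0$. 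This well-definedness is the step I expect to be the real obstacle, and it is precisely where the degeneracy of $i\ddbar\phi$ bites: one cannot invert $\beta\mapsto i\ddbar\phi\wedge\beta$ pointwise, so the argument must be global. For two sections $u_1,u_2$ the two local fields differ by a field $W$ which, by $\ddbar_X\phi\wedge v_{u_i}=\dbar_X\dot\phi\wedge u_i$, satisfies $W\rfloor i\ddbar\phi=0$ away from its poles, and which is a priori only meromorphic. To conclude $W=0$ I would invoke the analysis of meromorphic fields in section 8.1, in the spirit of the vanishing Propositions 8.1 and 8.2: here $H^{0,1}(X,K_X+L)=0$ and $H^0(X,K_X+L)\neq0$ (the latter because $K_X+L$ is base point free), and, crucially, $\phi$ is bounded, so $e^{-k\phi}$ is integrable for every $k$ and $i\ddbar\phi$ charges no divisor. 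This is exactly what lets the poles of $W$ be discarded and the global $L^2$-identity of those propositions force the associated $(n-1,0)$-form to vanish, hence $W=0$. Granting this, $V$ is well defined and holomorphic on $X\times\Omega$.

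Finally I would conclude as at the end of section 3: from $V\rfloor i\ddbar\phi=\pm i\dbar\dot\phi$ and the holomorphicity of $V$ one gets $L_V(i\ddbar\phi)=\partial(V\rfloor i\ddbar\phi)=i\ddbar\dot\phi=\partial_t(i\ddbar\phi)$, so if $F_t$ denotes the flow of $\mp V$ then $\tfrac{d}{dt}F_t^*(i\ddbar\phi_t)=0$ and therefore $F_t^*(i\ddbar\phi_t)=i\ddbar\phi_0$, which is the assertion.
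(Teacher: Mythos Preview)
Your proposal is correct and follows essentially the same route as the paper: you run the section~4 argument for each constant section $u$ to produce $v_u$ holomorphic on $X\times\Omega$ with $\ddbar_X\phi\wedge v_u=\dbar_X\dot\phi\wedge u$, then appeal to the meromorphic-field vanishing of section~8.1 (Proposition~8.4, using that $\phi$ bounded makes $e^{-k\phi}$ integrable for all $k$) to see the induced fields agree, and finally use base point freeness to eliminate poles. The paper's own proof says exactly this, only phrased as ``the meromorphic fields $V_{t,u}$ coincide by Proposition~8.4, and base point freeness then removes the poles''; your pointwise-construction-plus-well-definedness formulation is the same argument viewed from the other end.
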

\begin{proof}
As explained above the assumtion that $\mathcal{L}$ be affine means that the curvature of $E$ vanishes identically. Following the arguments of section 4, we get for each $t$ and each $u$ in  $H^0(X,K_X+L)$ a meromorphic vectorfield $V_{t,u}$ satisfying
$$
V_{t,u}\rfloor\ddbar\phi_t=\dbar\dot{\phi_t}.
$$
By Proposition 8.4 this means that $V_{t,u}=V_{t, u'}$, so all the fields for different choice of sections are the same. Since the poles of $V_{t,u}$ are contained in the zero set of $u$, and since we have assumed that our bundle is base point free it follows that there are no poles. The proof is then concluded in the same way as in section 4.
\end{proof}

\section{K\"ahler-Ricci solitons}
Let $X$ be a Fano manifold. A  K\"ahler form, $\omega$,  on $X$ in $c[-K_X]$ is said to be a K\"ahler-Ricci soliton if it satisfies the equation
\be
Ric(\omega)=\omega +L_V\omega
\ee
for some holomorphic vector field $V$ on $X$. Here $L_V$ is the Lie derivative of $\omega$ along $V$ which is also equal to $L_{\Re V}+i L_{\Im V}$. Taking real and imaginary parts we see that 
$$
 L_{\Im V}\omega=0
$$
so $\omega $ is invariant under the flow of the imaginary part of $V$. Tian and Zhu, see \cite{Tian-Zhu} have proved a generalization of the Bando-Mabuchi uniqueness theorem for solitons, saying that two solutions to (10.1) are related via the flow of a holomorphic vector field. In this section we shall show that this theorem also follows from Theorem 1.2. In a later paper, \cite{2Tian-Zhu}, Tian and Zhu also proved a generalization of this result for two solitons that a priori are associated to different vector fields. The proof there builds on their earlier result. For completeness we also sketch the very beautiful reduction of Tian-Zhu of the general problem to the problem for one fixed field, although we only have minor simplifications for that part of the argument. (See also  \cite{He-man}.)
\subsection{ Solitons associated to one fixed field}
As in the proof of Tian-Zhu our proof uses a generalization of the energy functional $\E$, that was introduced by Zhu in \cite{Zhu}, which we shall now describe. 

Let first 
$$
\H=\{\phi;\text{ metric on }\, -K_X, i\ddbar\phi\geq 0\}.
$$
In the sequel we write $\omega^\phi$ for $i\ddbar\phi$ if $\phi$ lies in $\H$.
If $V$ is any holomorphic vector field on $X$ and $\phi$ lies in $\H$, we will define a function $h^\phi$ by
$$
V\rfloor\omega^\phi=i\dbar h^\phi.
$$
This definition is meaningful since the left hand side is clearly a $\dbar$-closed $(0,1)$-form, and therefore $\dbar$-exact on a Fano manifold. 
Of course, $h^\phi$ is only determined up to a constant, and we will need to choose this constant in a coherent way. Here is one way to do this.
\begin{df} Let $\phi$ be a smooth metric on $-K_X$, and let as in section 2, $e^{-\phi}$ be the corresponding volume form on $X$. Then we define, for $V$ an arbitrary holomorphic vector field on $X$, the function $h^\phi_V$ by
\be
L_V(e^{-\phi})=-h^\phi_V e^{-\phi},
\ee
where $L_V$ is the Lie derivative. 
\end{df}

\begin{prop} The function $h^\phi_V$ defined in 10.2 has the following properties:

\medskip

1. $V\rfloor i\ddbar\phi=i\dbar h^\phi$,

\medskip

2. $\int h^\phi_V e^{-\phi}=0$,

\medskip

3. $h^\phi_{V+V'}=h^\phi_V +h^\phi_{V'}$,

\medskip

and if $\chi$ is a function

\medskip

4. $h^{\phi+\chi}_V =h^\phi_V+ V(\chi)$.

\medskip

Moreover, $h^\phi_V$ is real valued if and only if $\omega^\phi=i\ddbar\phi$ is invariant under the flow of $\Im V$.
\end{prop}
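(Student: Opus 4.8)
The plan is to reduce everything to a single local computation of the Lie derivative of the volume form and then read off the four properties and the final equivalence. First I would apply Cartan's formula $L_V\eta=d(V\rfloor\eta)+V\rfloor d\eta$ to the top-degree form $e^{-\phi}$; since $d(e^{-\phi})=0$ for dimensional reasons, $L_V(e^{-\phi})=d(V\rfloor e^{-\phi})$. Writing $V=\sum_j V^j\partial_{z_j}$ with $V^j$ holomorphic and $e^{-\phi}=c_n e^{-\phi^j}dz\wedge d\bar z$ in local coordinates, I contract and then differentiate; only the holomorphic derivative contributes, since an antiholomorphic differential would repeat the full factor $d\bar z$. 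This yields $L_V(e^{-\phi})=\sum_j(\partial_{z_j}V^j-V^j\partial_{z_j}\phi)\,e^{-\phi}$, so by Definition 10.1 one has locally $h^\phi_V=\sum_j(V^j\partial_{z_j}\phi-\partial_{z_j}V^j)$. This is a globally defined function because the defining relation $L_V(e^{-\phi})=-h^\phi_V e^{-\phi}$ is coordinate-free.

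From this the four properties follow quickly. For property 1 I would take $\dbar$ of the local formula: holomorphicity of $V$ kills the terms $\partial_{\bar z_k}V^j$ and $\partial_{\bar z_k}\partial_{z_j}V^j$, leaving $\dbar h^\phi_V=\sum_{j,k}V^j\phi_{j\bar k}\,d\bar z_k=V\rfloor\ddbar\phi$, which is exactly $V\rfloor i\ddbar\phi=i\dbar h^\phi$. Property 2 is Stokes' theorem: $\int_X L_V(e^{-\phi})=\int_X d(V\rfloor e^{-\phi})=0$ on the compact $X$, and the left-hand side equals $-\int_X h^\phi_V e^{-\phi}$. Property 3 is the $\C$-linearity of the Lie derivative in the field, $L_{V+V'}=L_V+L_{V'}$. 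For property 4, using that $L_V$ is a derivation and acts on a function $f$ by $V(f)$, I would expand $L_V(e^{-\chi}e^{-\phi})=-V(\chi)e^{-\chi}e^{-\phi}+e^{-\chi}L_V(e^{-\phi})$ and compare with the definition of $h^{\phi+\chi}_V$.

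For the final equivalence the key identity comes from applying Cartan once more, now to the closed real form $\omega^\phi=i\ddbar\phi$: using property 1, $L_V\omega^\phi=d(V\rfloor\omega^\phi)=d(i\dbar h^\phi_V)=i\ddbar h^\phi_V$. I would then split $L_V=L_{\Re V}+iL_{\Im V}$, where both $L_{\Re V}\omega^\phi$ and $L_{\Im V}\omega^\phi$ are real forms, and match this against $i\ddbar h^\phi_V=i\ddbar(\Re h^\phi_V)+i\,(i\ddbar(\Im h^\phi_V))$, in which $i\ddbar(\Re h^\phi_V)$ and $i\ddbar(\Im h^\phi_V)$ are both real. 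Comparing imaginary parts gives $L_{\Im V}\omega^\phi=i\ddbar(\Im h^\phi_V)$. Thus $h^\phi_V$ real immediately forces $L_{\Im V}\omega^\phi=0$; conversely $L_{\Im V}\omega^\phi=0$ gives $i\ddbar(\Im h^\phi_V)=0$, so $\Im h^\phi_V$ is pluriharmonic, hence constant on the compact K\"ahler manifold $X$, and the normalization from property 2 (that $\int_X\Im h^\phi_V\,e^{-\phi}=0$) forces this constant to vanish.

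The routine computations (the contraction and differentiation of the volume form, and the sign bookkeeping in property 1) are the bulk of the labor but are mechanical. The main obstacle is the last equivalence: one must carefully track which $\partial\dbar$-terms are real and which are purely imaginary forms when splitting the complex identity $L_V\omega^\phi=i\ddbar h^\phi_V$, and then upgrade ``$\Im h^\phi_V$ constant'' to ``$\Im h^\phi_V=0$''. It is precisely here that the coherent choice of constant in Definition 10.1 (equivalently property 2) together with the compactness of $X$ are indispensable; without the normalization one could only conclude invariance modulo the constant ambiguity.
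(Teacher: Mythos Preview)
Your proof is correct and follows essentially the same route as the paper's. The only cosmetic differences are that you obtain the local formula for $h^\phi_V$ via Cartan's formula $L_V(e^{-\phi})=d(V\rfloor e^{-\phi})$ whereas the paper computes it by pulling back along the flow $F_t$ of $V$ (both give $h^\phi_V=V(\phi_j)+\text{(holomorphic)}$), and for the final equivalence you split $L_V\omega^\phi=i\ddbar h^\phi_V$ into real and imaginary parts while the paper first writes $2\Im V\rfloor\omega^\phi=d\Re h^\phi_V+d^c\Im h^\phi_V$ and then takes $d$; both arrive at $L_{\Im V}\omega^\phi=i\ddbar(\Im h^\phi_V)$ and finish identically using compactness and the normalization in property 2.
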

\begin{proof} Property 2 follows since the integral of a  Lie derivative of a volume form always vanishes, 3 is direct from the formula for the  Lie derivative and 4 follows since $h^\phi_V$ is a logarithmic derivative. 

To check 1, choose local coordinates $z^j$ and a local representative of $\phi$ so that
$$
e^{-\phi}=c_n e^{-\phi_j} dz^j\wedge d\bar z^j.
$$
Let $F_t(z)$ be the flow of $V$. Then for small $t$
$$
F_t^*(e^{-\phi})=c_n  e^{-\phi_j\circ F_t} |J(t,z)|^2 dz^j\wedge d\bar z^j,
$$
where the Jacobian $J$ is holomorphic in $t$ and $z$ jointly. 
Then 
$$
h^\phi_V=\frac{\partial}{\partial t}( \phi_j\circ F_t -\log |J(t,z)|^2)|_{t=0}=V(\phi_j)+ R.
$$
Here $R$ is holomorphic in $z$, so
$$
i\dbar h^\phi_V=i\dbar V(\phi_j)=V\rfloor i\ddbar\phi.
$$
For the final claim, note that 1 implies that
$$
2\Im V\rfloor i\ddbar\phi= d\Re h^\phi +d^c \Im h^\phi.
$$
Hence
$L_{\Im V}\omega^\phi=0$ if and only if the imaginary part of $h^\phi_V$ is constant. By 2, this constant must be zero. 
\end{proof}
\begin{preremark} Since 1 and 2 of Proposition 10.1 determine $h^\phi_V$ uniquely,  we could also have defined $h^\phi_V$ by properties 1 and 2. This is the route taken in \cite{Tian-Zhu} and \cite{2Tian-Zhu}. We have chosen to start instead from 10.2 since it seems to simplify the argument somewhat and also gives 3 and 4 for free.
\end{preremark}

\bigskip

Next we let
$$
\H_{V}=\{\phi\in \H_X; L_{\Im V}i\ddbar\phi=0\}.
$$
We can now define Zhu's energy functional by, if $\phi_t$ is a smooth curve in $\H_V$, 
\be
\frac{d}{dt}\E_{Z,a}(\phi_t):=-\int \dof (i\ddbar\phi_t)^n e^{a h^{\phi_t}}
\ee
where $a$ is some real constant. In the sequel we will suppress the subscript $a$, and in the end we will choose $a=1$, but it seems useful to include an arbitrary $a$ in the discussion anyway.  Of course we need to prove that $\E_Z$ is well defined, cf \cite{Zhu}. This and other basic properties of $\E_Z$ are summarized in the next propositions.
\begin{prop}
Let $\phi_{s,t}$ be a smoothly parametrized family of metrics in $\H_V$. Then
$$
\frac{d}{ds}\int \frac{d\phi_{t,s}}{dt} (i\ddbar\phi_{t,s})^n e^{a h^{\phi_{t,s}}}=
\int(\frac{d^2\phi_{t,s}}{dtds}-\langle\dbar\dot{\phi_t},\dbar\dot{\phi_s}\rangle)(i\ddbar\phi_{t,s})^n e^{a h^{\phi_{t,s}}}.
$$
\end{prop}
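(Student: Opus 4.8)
The plan is to differentiate under the integral sign and match the resulting pieces against the two terms on the right. Writing $\omega^\phi=i\ddbar\phi$ and $\mu=e^{ah^{\phi}}$, the integrand is $\dof\,(\omega^\phi)^n\mu$, and its $s$-derivative splits into three pieces according to whether we differentiate the factor $\dof=d\phi/dt$, the volume form $(\omega^\phi)^n$, or the weight $\mu$. Differentiating $\dof$ gives immediately $\int_X \frac{d^2\phi}{dtds}(\omega^\phi)^n\mu$, which is the first term on the right-hand side. Hence the entire content of the proposition is that the remaining two pieces, call them $T_\omega$ (from the volume form) and $T_\mu$ (from the weight), combine to $-\int_X\langle\dbar\dof,\dbar\dot{\phi_s}\rangle(\omega^\phi)^n\mu$.

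For $T_\mu$ I would first record the infinitesimal form of property 4 of Proposition 10.1: differentiating the defining relation (10.2), $L_V(e^{-\phi})=-h^\phi_V e^{-\phi}$, along the family and cancelling the common $\dot{\phi_s}\,h^\phi$ terms yields $\frac{d}{ds}h^{\phi_s}=V(\dot{\phi_s})$. Therefore, since $\dbar\mu=a\,\dbar h^\phi\,\mu$ and $\frac{d}{ds}\mu=a\,V(\dot{\phi_s})\,\mu$, the weight piece is $T_\mu=a\int_X\dof\,V(\dot{\phi_s})(\omega^\phi)^n\mu$.

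For $T_\omega$ I use $\frac{d}{ds}(\omega^\phi)^n=n\,i\ddbar\dot{\phi_s}\wedge(\omega^\phi)^{n-1}$ and integrate by parts on the compact manifold $X$, using that $\omega^\phi$ is $d$-closed, to move the derivative off $\dot{\phi_s}$ (applying Stokes to $d\big(\dof\,\mu\, i\partial\dot{\phi_s}\wedge(\omega^\phi)^{n-1}\big)$ and discarding the terms of impossible bidegree). Because $\mu$ is present, expanding $\dbar(\dof\,\mu)=(\dbar\dof+a\,\dof\,\dbar h^\phi)\mu$ produces two contributions: the principal term $n\int_X\dbar\dof\wedge i\partial\dot{\phi_s}\wedge(\omega^\phi)^{n-1}\mu$, which by the pointwise Kähler identity $n\,\dbar\dof\wedge i\partial\dot{\phi_s}\wedge(\omega^\phi)^{n-1}=-\langle\dbar\dof,\dbar\dot{\phi_s}\rangle(\omega^\phi)^n$ (valid because $\dot{\phi_s}$ is real) is exactly the desired inner-product term; and a correction term $na\int_X\dof\,\dbar h^\phi\wedge i\partial\dot{\phi_s}\wedge(\omega^\phi)^{n-1}\mu$ coming from $\dbar\mu$.

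The crux, and the step I expect to be the main obstacle, is to show that this correction term cancels $T_\mu$ exactly; this is precisely why the normalization of $h^\phi$ makes Zhu's functional behave well. Here I would invoke property 1 of Proposition 10.1 in the form $\dbar h^\phi=-i\,(V\rfloor\omega^\phi)$, together with the contraction identity obtained from the vanishing of the top-degree expression $V\rfloor\big(\partial\dot{\phi_s}\wedge(\omega^\phi)^n\big)=0$, namely $(V\rfloor\partial\dot{\phi_s})(\omega^\phi)^n=-n\,(V\rfloor\omega^\phi)\wedge\partial\dot{\phi_s}\wedge(\omega^\phi)^{n-1}$. Substituting and using $V(\dot{\phi_s})=V\rfloor\partial\dot{\phi_s}$ shows the correction term equals $-T_\mu$, so $T_\omega+T_\mu$ reduces to the inner-product term alone, giving the claim. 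The only genuinely delicate bookkeeping is tracking the signs and the factors of $i$ through the two integrations by parts and the contraction identity; the reality of $h^\phi$ on $\H_V$ (the final assertion of Proposition 10.1) guarantees that $\mu$ is a positive weight and that all manipulations are of real smooth objects.
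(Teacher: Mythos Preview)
Your argument is correct and follows the same overall plan as the paper's proof in the Appendix: split the $s$--derivative into the three pieces $I$, $T_\omega$, $T_\mu$, integrate $T_\omega$ by parts, and use the antiderivation identity for $V\rfloor$ on $\partial\dot\phi_s\wedge(\omega^\phi)^n$ to match the correction against $T_\mu$.

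There is one small but genuine difference worth noting. You integrate by parts by moving $\dbar$ off $i\ddbar\dot\phi_s$, so the correction picks up $\dbar h^\phi$, which by property~1 is $-i\,V\rfloor\omega^\phi$; the contraction identity then gives you $-T_\mu$ on the nose. The paper instead moves $\partial$, so its correction involves $\partial h^\phi$ and hence $\bar V$ rather than $V$; combined with $T_\mu$ this leaves a residual $2ia\int\dof\,\Im V(\dot\phi_s)\,(\omega^\phi)^n e^{ah^\phi}$, which the paper then kills by observing that the left-hand side is real (equivalently, that $\phi$ is $\Im V$--invariant). Your route is therefore marginally cleaner: the cancellation is exact and the $\H_V$ hypothesis enters only through the reality of $h^\phi$ (so that $\mu>0$ and $V(\dot\phi_s)=\tfrac{d}{ds}h^\phi$ is real), rather than through an extra reality argument at the end. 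One cosmetic point: your pointwise identity $n\,\dbar\dof\wedge i\partial\dot\phi_s\wedge(\omega^\phi)^{n-1}=-\langle\dbar\dof,\dbar\dot\phi_s\rangle(\omega^\phi)^n$ produces the \emph{complex} scalar product pointwise; it agrees with the real scalar product used in the statement only after integration, precisely because $V(\dot\phi_s)$ is real on $\H_V$ and hence your $P=T_\omega+T_\mu$ is real.
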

The bracket $\langle\dbar\dot{\phi_t},\dbar\dot{\phi_s}\rangle$ stands here for the real scalar product of the two forms with respect to the metric $i\ddbar\phi_{t,s}$, i e the real part of the complex scalar product. 
Since the right hand side is therefore symmetric in $t$ and $s$ it follows that $\E_Z$ is well defined and putting $t=s$ that
\begin{prop}
$$
\frac{d^2}{dt^2}\E_Z=-\int(\ddot{\phi_{tt}}-|\dbar\dof|^2)(i\ddbar\phi_t)^n e^{ah^{\phi_t}}.
$$
\end{prop}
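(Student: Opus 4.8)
The plan is to first reduce the stated identity to Proposition 10.2, of which it is simply the diagonal case $s=t$. By the defining formula (10.3), $\frac{d}{dt}\E_Z(\phi_t)=-\int\dof\,(i\ddbar\phi_t)^n e^{ah^{\phi_t}}$, so
$$
\frac{d^2}{dt^2}\E_Z=-\frac{d}{dt}\int\dof\,(i\ddbar\phi_t)^n e^{ah^{\phi_t}}=-\int\big(\ddot{\phi_{tt}}-|\dbar\dof|^2\big)(i\ddbar\phi_t)^n e^{ah^{\phi_t}},
$$
where the second equality is Proposition 10.2 evaluated at $s=t$, using $\langle\dbar\dof,\dbar\dof\rangle=|\dbar\dof|^2$. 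Thus all the content lies in Proposition 10.2, which I would establish as follows.

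Write $\omega=i\ddbar\phi_{t,s}$ and $h=h^{\phi_{t,s}}$ and differentiate the integrand $\dot{\phi_t}\,\omega^n e^{ah}$ in $s$. The three $s$-dependent factors give
$$
\frac{d}{ds}\int\dot{\phi_t}\,\omega^n e^{ah}=\int\frac{d^2\phi}{dt\,ds}\,\omega^n e^{ah}+\int\dot{\phi_t}\,n\,i\ddbar\dot{\phi_s}\wedge\omega^{n-1}e^{ah}+\int a\,\dot{\phi_t}\,\frac{\partial h}{\partial s}\,\omega^n e^{ah}.
$$
The first term is already the desired $\frac{d^2\phi}{dt\,ds}$ contribution. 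In the middle term I would integrate by parts, moving the $\partial$ out of $i\ddbar\dot{\phi_s}=i\partial\dbar\dot{\phi_s}$ and using $\partial\omega^{n-1}=0$, so that it becomes $-\int\partial(\dot{\phi_t}e^{ah})\wedge n\,i\dbar\dot{\phi_s}\wedge\omega^{n-1}$. Its $\partial\dot{\phi_t}$-part equals $-\int\langle\dbar\dot{\phi_t},\dbar\dot{\phi_s}\rangle\,\omega^n e^{ah}$ by the pointwise identity $n\,i\partial f\wedge\dbar g\wedge\omega^{n-1}=\langle\dbar g,\dbar f\rangle\omega^n$, while the part where $\partial$ hits $e^{ah}$ is $-\int a\,\dot{\phi_t}\,\langle\dbar\dot{\phi_s},\dbar h\rangle\,\omega^n e^{ah}$.

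The crux is that this last leftover cancels the third term, and here I would use the defining relation $V\rfloor\omega=i\dbar h$ of Proposition 10.1(1) together with its $s$-derivative $\frac{\partial h}{\partial s}=V(\dot{\phi_s})$, obtained from Proposition 10.1(4). In $\omega$-normal coordinates $V\rfloor\omega=i\dbar h$ reads $V^j=\partial_{\bar j}h$, so that $V(\dot{\phi_s})=\sum_j\partial_{\bar j}h\,\partial_j\dot{\phi_s}$; since $\phi_{t,s}\in\H_V$ the function $h$ is real by the last claim of Proposition 10.1, and the variation staying in $\H_V$ forces $V(\dot{\phi_s})$ to be real, whence $\frac{\partial h}{\partial s}=V(\dot{\phi_s})=\langle\dbar\dot{\phi_s},\dbar h\rangle$. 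The third term then cancels the leftover exactly, leaving $\int\big(\frac{d^2\phi}{dt\,ds}-\langle\dbar\dot{\phi_t},\dbar\dot{\phi_s}\rangle\big)\omega^n e^{ah}$, which is Proposition 10.2.

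I expect the main obstacle to be precisely this cancellation. One must keep the reality of $h$, equivalently the constraint $L_{\Im V}i\ddbar\phi=0$ cutting out $\H_V$, in play so that $\frac{\partial h}{\partial s}$ is genuinely the Hermitian pairing $\langle\dbar\dot{\phi_s},\dbar h\rangle$ rather than its complex conjugate; and one must observe that only the real part of $\langle\dbar\dot{\phi_t},\dbar\dot{\phi_s}\rangle$ contributes, its antisymmetric imaginary part dropping out after symmetrization in $t$ and $s$. For the final statement this subtlety is absent, since at $s=t$ the pairing is the honest norm $|\dbar\dof|^2$.
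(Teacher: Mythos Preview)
Your argument is correct and essentially identical to the paper's: Proposition 10.3 is deduced from Proposition 10.2 by setting $s=t$, and the Appendix proves Proposition 10.2 by exactly the integration by parts and cancellation you describe. The only cosmetic difference is in how the cancellation is phrased: rather than passing to normal coordinates, the paper rewrites the leftover from II via the contraction identity $\bar V\rfloor(\dbar\dot\phi_s\wedge\omega^n)=0$ to produce $\overline{V(\dot\phi_s)}$, so that II$+$III leaves the purely imaginary term $2ia\int\dot\phi_t\,\Im V(\dot\phi_s)\,\omega^n e^{ah^\phi}$, which vanishes by $\Im V$-invariance (equivalently, by the reality of $h^\phi$ that you invoke).
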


Moreover, taking $\phi_{t,s}=\phi_s+t$ we see that
$$
\int(i\ddbar\phi)^n e^{ah^{\phi}}
$$
is constant on $\H_V$. This is of course where the specific choice of $h^\phi$ is important. Proposition 10.2
 is essentially contained in Zhu's paper but formulated differently there and we will give a proof in an appendix. 
\begin{prop} $\E_Z$ is affine along the  $C^{1,1}$-geodesic connecting two smooth metrics in $\H_V$.
\end{prop}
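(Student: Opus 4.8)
The plan is to recognize the integrand of Proposition 10.4 as the top Monge--Amp\`ere mass of $\phi$ on $X\times\Omega$ and thereby reduce the statement to the vanishing, in the Bedford--Taylor sense, of $(i\ddbar_{t,X}\phi)^{n+1}$ along a geodesic. Recall from the introduction that, for $\phi$ depending only on $\Re t$, one has $(i\ddbar_{t,X}\phi)^{n+1}=n\,c(\phi)(i\ddbar\phi)^n\wedge idt\wedge d\bar t$ with $c(\phi)=\ddot{\phi_t}-|\dbar\dof|^2$. Hence, writing $p\colon X\times\Omega\to\Omega$ for the projection, Proposition 10.4 is exactly
$$
\frac{d^2}{dt^2}\E_Z(\phi_t)\,idt\wedge d\bar t=-\tfrac1n\,p_*\!\big(e^{a h^{\phi}}(i\ddbar_{t,X}\phi)^{n+1}\big).
$$
In the smooth case a geodesic satisfies $(i\ddbar_{t,X}\phi)^{n+1}=0$ pointwise, so the right-hand side vanishes and $\E_Z(\phi_t)$ is affine. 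The whole task is to justify this identity, and its vanishing right-hand side, for the merely $C^{1,1}$ geodesic.

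I would argue by approximation, exactly in the spirit of sections 2.3--2.4 and section 4. Take smooth subgeodesics $\phi^\nu\downarrow\phi$ with $i\ddbar\phi^\nu\ge-\epsilon_\nu\omega$ and uniformly bounded $t$-derivatives (section 2.3). To keep $h^{\phi^\nu}$ and $\E_Z$ defined I would force $\phi^\nu\in\H_V$ by averaging over the compact torus generated by $\Im V$; since $\phi$ is already $\Im V$-invariant and $\omega$ may be chosen invariant, this averaging preserves both the decreasing property and the uniform derivative bound. For each $\nu$ the displayed identity holds in the smooth form. Passing to the limit, the left-hand side converges to $\frac{d^2}{dt^2}\E_Z(\phi_t)$ in the distributional sense in $t$ (because $\E_Z(\phi^\nu_t)\to\E_Z(\phi_t)$), while on the right-hand side the weighted top Monge--Amp\`ere masses of the decreasing sequence converge to the weighted top mass of the limit. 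Since the geodesic has $(i\ddbar_{t,X}\phi)^{n+1}=0$, that limit is the bounded weight $e^{a h^{\phi}}$ times the null measure, hence $0$. Therefore $\frac{d^2}{dt^2}\E_Z(\phi_t)=0$ distributionally, so $\E_Z$ is affine along the geodesic.

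Several ingredients must be checked. First, $h^\phi$ for a $C^{1,1}$ metric is defined by $V\rfloor i\ddbar_X\phi=i\dbar h^\phi$ normalized by $\int h^\phi e^{-\phi}=0$; since $i\ddbar_X\phi$ has $L^\infty$ coefficients the datum is a bounded $\dbar$-exact $(0,1)$-form, so elliptic regularity gives $h^\phi\in W^{1,p}$ for all $p$, in particular bounded and continuous, and real on $\H_V$ by Proposition 10.1, so $e^{a h^\phi}$ is a genuine bounded continuous weight; one also needs $h^{\phi^\nu}\to h^\phi$ uniformly, which follows from the convergence of the defining equations. Second, $\E_Z(\phi^\nu_t)\to\E_Z(\phi_t)$, which comes from the first-derivative formula together with dominated convergence. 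The main obstacle will be the limit in the weighted mass $e^{a h^{\phi^\nu}}(i\ddbar_{t,X}\phi^\nu)^{n+1}$: one must combine the weak Bedford--Taylor convergence of the spatial factors $(i\ddbar_X\phi^\nu)^k$ with only the almost-everywhere, dominated convergence of the $t$-derivatives $\dofnu\to\dof$ furnished by Lemma 4.1, and with the uniform convergence of the weight, so that the whole $(n+1)$-fold product converges as a measure in $t$. Reconciling these differing modes of convergence in a single degenerating family is the delicate point; the $\H_V$-constraint, by contrast, is dealt with cleanly by the torus averaging above.
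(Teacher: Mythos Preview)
Your overall architecture is exactly the paper's: rewrite Proposition 10.3 as
\[
i\ddbar_t\,\E_Z(\phi_t)=-\tfrac1n\,p_*\!\bigl(e^{ah^{\phi_t}}(i\ddbar_{t,X}\phi)^{n+1}\bigr),
\]
approximate by smooth metrics, and pass to the limit using weak continuity of Monge--Amp\`ere together with uniform convergence of the weight $e^{ah^{\phi}}$. The difference is in \emph{which} approximation you use, and this is where your argument has a real gap.

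You invoke the general Demailly/B{\l}ocki--Ko{\l}odziej regularization of section 2.3, then average over the torus generated by $\Im V$ to land in $\H_V$. That regularization, however, only produces a decreasing sequence with $C^0$ convergence; it carries no control on first derivatives in the $X$-direction. But by property 4 of Proposition 10.1 one has $h^{\phi^\nu}_V=h^{\phi_0}_V+V(\phi^\nu-\phi_0)$ for any fixed smooth reference $\phi_0$, so uniform convergence of $h^{\phi^\nu}$ is precisely $C^1$ convergence of $\phi^\nu$ in the $V$-direction. Your claim that this ``follows from the convergence of the defining equations'' is not justified: $V\rfloor i\ddbar\phi^\nu$ converges only as a current, which gives no uniform bound on $h^{\phi^\nu}$. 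Without a uniformly bounded (let alone uniformly convergent) weight, the passage to the limit in $p_*\bigl(e^{ah^{\phi^\nu}}(i\ddbar\phi^\nu)^{n+1}\bigr)$ breaks down, and the ``delicate point'' you flag at the end becomes an actual obstruction rather than a technicality.

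The paper sidesteps this by approximating not with an arbitrary regularization but with Chen's $\epsilon$-geodesics: smooth solutions of a strictly elliptic perturbation of the geodesic equation. These come with uniform $C^{1,1}$ estimates, hence converge in $C^1$, so $h^{\phi^\nu}\to h^{\phi}$ uniformly and the weighted Monge--Amp\`ere convergence goes through cleanly. Moreover, uniqueness for the elliptic equation forces the $\epsilon$-geodesics to inherit $\Im V$-invariance from the boundary data, so no torus averaging is needed. Replacing your section 2.3 approximation by the $\epsilon$-geodesics fixes the argument.
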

\begin{proof} It is well known that
$$
(\ddot{\phi_{tt}}-|\dbar\dof|^2)(i\ddbar\phi_t)^n\wedge idt\wedge d\bar t=(1/n)(i\ddbar\phi)^{n+1}),
$$
where $(i\ddbar\phi)^{n+1}$ is the Monge-Ampere measure of $\phi$ with respect to all variables on $\Omega\times X$. The formula in Proposition 10.3 can therefore be interpreted as saying that
$$
\ddbar \E_Z (\phi_t) =(-1/n)p_*((i\ddbar\phi)^{n+1}e^{a h^{\phi_t}}).
$$
This was proved assuming that $\phi$ is smooth so we need to regularize $\phi$ if it is only of class $C^{1,1}$. Moreover, we need regularize so that we stay in the space $\H_V$. This is actually achieved by Chen's proof of the $C^{1,1}$-regularity of geodesics. There the geodesic is obtained as the limit of smooth  $\epsilon$-geodesics that are solutions  of a strictly elliptic equation. These $\epsilon$-geodesics are invariant under $\Im V$ if the boundary values are. 

It is well known that the Monge-Ampere measure converges weakly under decreasing limits of bounded plurisubharmonic functions. Moreover it is clear from our formula for $h^{\phi_t}$ that $h^{\phi_t}$  converges uniformly under limits in $C^1$. Therefore the formula holds also if $\phi$ is only in $C^{1,1}$. Since the Monge-Ampere measure of a (generalized) geodesic vanishes the claim follows.
\end{proof}
\begin{preremark} It is also true that $\E_Z$ is affine along any $C^1$-geodesic in $\H_V$. This can be proved by approximating a $C^1$ -geodesic in $\H_V$ by smooth curves in $\H_V$, but we omit the details.
\end{preremark}

\bigskip

Let us now see how the uniqueness theorem of Tian-Zhu follows from Theorem 1.2 and Proposition 10.2. Let $\omega_0=i\ddbar\phi_0$ and $\omega_1=i\ddbar\phi_1$ be two (smooth) solutions to equation 10.1. As noted above $\phi_j$ lie in $\H_V$. To avoid technical complications we will resort to Chen's theorem and connect $\phi_0$ and $\phi_1$ with a $C^{1,1}$-geodesic, $\phi_t$. By uniqueness of geodesics it follows that for all $t$ $\phi_t$ lies in $\H_V$. Since
$$
L_Vi\ddbar\phi=d (V\rfloor i\ddbar\phi)= d i\dbar h^\phi=i\ddbar h^\phi,
$$
we can rewrite equation 10.1 as 
\be
(i\ddbar\phi)^ne^{h^\phi}=C e^{-\phi}
\ee
for some constant $C$. Choose $C_0$ so that 
$$
C_0\int(i\ddbar\phi)^n e^{h^{\phi}}=1.
$$
Then (10.4) implies that
$$
C_0 (i\ddbar\phi)^n e^{h^{\phi}}=\frac{e^{-\phi}}{\int e^{-\phi}}.
$$
Define
$$
\F_Z(\phi)=\log\int e^{-\phi} -C_0\E_Z(\phi).
$$
Any solution of equation 10.4 in $\H_V$ is a critical point of $\F_Z$. Since $\E_Z$ is affine along our geodesic we see by Theorem 1.1 that $\F_Z(\phi_t)$ is convex in $t$. If $\phi_0$ and $\phi_1$ are critical points $\F_Z(\phi_t)$ and hence
$$
\log\int e^{-\phi_t}
$$
are also affine in $t$. Theorem 1.2 then implies that there is a holomorphic vector field on $X$ with time one flow $F$ such that $F^*(\omega_1)=\omega_0$. 

\subsection{ Solitons associated to different fields}

In this section we sketch the arguments of Tian-Zhu from \cite{2Tian-Zhu} to prove that two solutions to the equations 
\be
Ric(\omega_0)=\omega_0 +L_V\omega_0
\ee
and
\be
Ric(\omega_1)=\omega_1 +L_W\omega_1,
\ee
where $V$ and $W$ are two holomorphic vector fields on $X$, are also connected via an automorphism in $Aut_0(X)$, the connected component of the identity in the automorphism group of $X$. As we have seen $\Im V$ generates a flow of isometries for $\omega_0$. This flow is contained in a maximal compact subgroup $K_0$ of $Aut_0(X)$. In the same way, the flow of $\Im W$ is contained in another maximal compact subgroup, $K_1$. By a fundamental theorem of Iwasawa, \cite{Iwasawa}, $K_0$ and $K_1$ are conjugate by an automorphism $g$ in $Aut_0$.
This means that after a preliminary automorphism applied to one of the equations, we may assume that $K_0=K_1=:K$. Tian-Zhu then show that this implies that $V=W$. 

To explain how this is done we go back to the construction of the function $h^\phi$ in the previous subsection, with basic properties described in Proposition 10.1 .
Notice that property 4 of Proposition 10.1 means that if $\phi=\phi_t$ depends on a real parameter, then $(d/dt)h^{\phi_t}_V=V(\dot{\phi}_t)$. Following \cite{2Tian-Zhu} we now define a functional
$$
f(\phi,V):=\int e^{h^\phi_V}(i\ddbar\phi)^n,
$$
for $\phi$ in $\H$ and $V$ any holomorphic vector field on $X$.
\begin{prop}
$f(\phi,V)$ does not depend on $\phi$.
\end{prop}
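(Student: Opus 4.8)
The plan is to show that the derivative of $f(\cdot,V)$ vanishes along any smooth variation of $\phi$ inside $\H$. Since $\H$ is convex — if $i\ddbar\phi_0\geq 0$ and $i\ddbar\phi_1\geq 0$ then the segment $\phi_t=(1-t)\phi_0+t\phi_1$ satisfies $i\ddbar\phi_t\geq 0$ — any two metrics are joined by such a variation, so constancy of $f$ follows from $\frac{d}{dt}f(\phi_t,V)\equiv 0$. Writing $\omega=i\ddbar\phi_t$ (here $\omega$ is $i\ddbar\phi_t$, not the background form) and $h=h^{\phi_t}_V$, I would differentiate under the integral sign to get
$$\frac{d}{dt}f(\phi_t,V)=\int_X \Big(\tfrac{d}{dt}h\Big)e^{h}\,\omega^n+\int_X e^{h}\,\tfrac{d}{dt}\omega^n .$$

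For the density term I would use property 4 of Proposition 10.1 — which is precisely where the coherent normalization of $h^\phi_V$ enters — in the form noted just above, $\frac{d}{dt}h=V(\dot\phi_t)$. Then the purely algebraic contraction identity $V\rfloor(\partial\dot\phi\wedge\omega^n)=0$, valid on an $n$-fold for degree reasons and independently of any positivity of $\omega$, gives
$$V(\dot\phi)\,\omega^n=(V\rfloor\partial\dot\phi)\,\omega^n=n\,\partial\dot\phi\wedge(V\rfloor\omega)\wedge\omega^{n-1}=n\,\partial\dot\phi\wedge i\dbar h\wedge\omega^{n-1},$$
where the last step is property 1, $V\rfloor\omega=i\dbar h$. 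Hence the first term equals $n\int_X e^{h}\,\partial\dot\phi\wedge i\dbar h\wedge\omega^{n-1}$.

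For the volume term, $\frac{d}{dt}\omega^n=n\,i\ddbar\dot\phi\wedge\omega^{n-1}$ and $\omega$ is closed, so I would integrate by parts, moving $\dbar$ (not $\partial$) off $\dot\phi$ and onto $e^h$: writing $i\partial\dbar\dot\phi=-i\dbar\partial\dot\phi$ and applying Stokes to the $(n,n-1)$-form $e^{h}\,i\partial\dot\phi\wedge\omega^{n-1}$ yields $\int_X e^{h}\,i\partial\dbar\dot\phi\wedge\omega^{n-1}=-\int_X e^{h}\,\partial\dot\phi\wedge i\dbar h\wedge\omega^{n-1}$, after using $\dbar h\wedge i\partial\dot\phi=-\partial\dot\phi\wedge i\dbar h$. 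Thus the second term equals $-n\int_X e^{h}\,\partial\dot\phi\wedge i\dbar h\wedge\omega^{n-1}$, which is exactly the negative of the first, and the two cancel.

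The cancellation itself is forced once one integrates by parts on the correct side, so I do not expect it to be the real difficulty; the main obstacle is the regularity bookkeeping. Differentiation under the integral and the integration by parts are transparent only for smooth $\phi$, so I would first establish the identity for smooth metrics in $\H$ and then recover an arbitrary $\phi\in\H$ by approximating it with a decreasing sequence of smooth metrics as in section 2.3, invoking weak continuity of the Monge--Amp\`ere measure $\omega^n$ together with the uniform convergence of $h^{\phi}_V$ under $C^1$-limits already used in the proof of Proposition 10.4. It is worth recording that positivity of $\omega$ is never used in the algebra, only to guarantee convexity of $\H$ and finiteness of the integrals.
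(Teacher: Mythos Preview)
Your proof is correct and follows essentially the same route as the paper: differentiate $f(\phi_t,V)$ along a path, use property~4 to identify $\tfrac{d}{dt}h^{\phi_t}_V=V(\dot\phi_t)$, rewrite $V(\dot\phi)\,\omega^n$ via the antiderivation identity for contraction with $V$ and property~1, and then integrate the $i\ddbar\dot\phi$ term by parts to produce the cancelling expression. The paper compresses the last step into the single phrase ``inserting this and integrating by parts''; you have simply written it out. Your closing paragraph on approximating non-smooth $\phi$ is harmless but unnecessary here, since in this section the paper is working with smooth metrics.
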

\begin{proof} Take $\phi=\phi_t$ and differentiate with respect to $t$:
$$
\frac{d}{dt}f(\phi_t,V)=\int V(\dot{\phi}_t) e^{h^\phi_V}(i\ddbar\phi)^n+ n\int
 e^{h^\phi_V}i\ddbar\dot{\phi}_t\wedge(i\ddbar\phi)^{n-1}
$$
But, since contraction with $V$ is an antiderivation, 
$$
V(\dot{\phi}_t)(i\ddbar\phi)^n=ni\partial\dot{\phi}_t\wedge\dbar h^\phi_V\wedge
(i\ddbar\phi)^{n-1}.
$$
Inserting this and integrating by parts we see that the derivative of $f$ with respect to $t$ vanishes, so $f$ does not depend on $\phi$. 
\end{proof} 
In the sequel we write $f(\phi,V)=f(V)$.
\begin{prop} Suppose the holomorphic vector field $V$ admits a K\"ahler-Ricci soliton, i e that there is a solution $\omega$ to the equation
$$
Ric(\omega)=\omega +L_V(\omega).
$$
Then $V$ is a critical point of $f$. 
\end{prop}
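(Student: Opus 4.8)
The plan is to show that every directional derivative of $f$ at $V$ vanishes. Fix an arbitrary holomorphic vector field $W$ and consider the one-parameter family $V_s=V+sW$; the goal is to compute $\frac{d}{ds}\big|_{s=0}f(V_s)$ and check that it is zero. The essential simplification is that, by Proposition 10.6, $f$ does not depend on the choice of metric $\phi\in\H$, so I am free to evaluate everything at the soliton metric $\phi$, where $\omega=i\ddbar\phi$. Holding this $\phi$ fixed, the only dependence of $f(V_s)=\int_X e^{h^\phi_{V_s}}(i\ddbar\phi)^n$ on $s$ enters through the function $h^\phi_{V_s}$, since $(i\ddbar\phi)^n$ is unaffected by the variation of the field.

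First I would differentiate $h^\phi_{V_s}$ in $s$. By property 3 of Proposition 10.1 (together with homogeneity, which follows directly from the defining relation $L_{V}e^{-\phi}=-h^\phi_Ve^{-\phi}$) the assignment $V\mapsto h^\phi_V$ is linear, so $h^\phi_{V+sW}=h^\phi_V+s\,h^\phi_W$ and hence $\frac{d}{ds}\big|_{s=0}h^\phi_{V_s}=h^\phi_W$. Differentiating under the integral sign then gives
\[
\frac{d}{ds}\Big|_{s=0}f(V_s)=\int_X h^\phi_W\,e^{h^\phi_V}(i\ddbar\phi)^n .
\]

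Next I would bring in the soliton equation. As recorded in (10.4), the equation $\mathrm{Ric}(\omega)=\omega+L_V\omega$ is equivalent, after fixing the constant, to
\[
e^{h^\phi_V}(i\ddbar\phi)^n=C\,e^{-\phi}
\]
for some constant $C$. Substituting this into the previous display yields
\[
\frac{d}{ds}\Big|_{s=0}f(V_s)=C\int_X h^\phi_W\,e^{-\phi},
\]
and this integral vanishes by property 2 of Proposition 10.1, which is precisely the normalization $\int_X h^\phi_W e^{-\phi}=0$. Since $W$ was arbitrary, all directional derivatives of $f$ at $V$ vanish and $V$ is a critical point of $f$.

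The one point that requires care — and which is the real reason the statement holds — is that the variation defining criticality is a variation of the vector field $V$, whereas Propositions 10.1 and 10.6 are most naturally phrased in terms of variations of $\phi$. The trick is exactly to exploit the $\phi$-independence of $f$ in order to place oneself at the soliton metric, so that the normalization property 2 (which is what pins down the additive constant in $h^\phi_V$) can be brought to bear. I do not expect a genuine analytic obstacle: all data are smooth, and the only routine check is the legitimacy of differentiating under the integral, which is immediate here because $h^\phi_{V_s}$ is affine in $s$.
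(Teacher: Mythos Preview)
Your argument is correct and follows essentially the same route as the paper: use linearity of $V\mapsto h^\phi_V$ to compute the directional derivative as $\int h^\phi_W e^{h^\phi_V}(i\ddbar\phi)^n$, invoke the $\phi$-independence of $f$ to evaluate at the soliton metric so that $e^{h^\phi_V}(i\ddbar\phi)^n=Ce^{-\phi}$, and conclude via the normalization $\int h^\phi_W e^{-\phi}=0$. The only discrepancy is a numbering slip---what you cite as Proposition~10.6 for the $\phi$-independence is Proposition~10.5 in the paper's numbering.
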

\begin{proof}
By property 3 of Proposition 10.4 the derivative of $f$ at the point $V$ in the direction $U$ is
$$
\int h^\phi_U e^{h^\phi_V}(i\ddbar\phi)^n.
$$
Here we can choose any $\phi$ in $\H$ by Proposition 10.5. If we take $i\ddbar\phi$ to be a $V$-soliton, then
 $ e^{h^\phi_V}(i\ddbar\phi)^n= C e^{-\phi}$ by 10.4.  Hence the derivative is zero for any $U$ by property 2 of proposition 10.4
\end{proof}
\bigskip

Recall that $K$ is a compact subgroup of $Aut_0(X)$ which contains the flows of both $\Im V$ and $\Im W$. Let $f_K$ be the restriction of $f$  to the space of all vector fields that have this property, i e whose imaginary part lie in the Lie algebra of $K$. Choose $\omega=\omega^{\phi}$ to be $K$-invariant; this can be achieved by taking averages with respect to the Haar measure of $K$. Write
$$
f_K(V)=\int e^{h^\phi_V}\omega^n.
$$
By Proposition 10.4 all $h^\phi_V$ are real valued if $V$ is such a field and $i\ddbar\phi$ is $K$-invariant. Since moreover $h^\phi_V$ is linear in $V$ by property 3 of Proposition 10.1, this formula shows that $f_K$  is strictly convex. 
Therefore $f_K$ can have at most one critical point. It follows immediately that there is at most one vector field $V$ with $\Im V$ in the Lie algebra of $K$ that admits a soliton. In other words, $V$ and $W$ from the beginning of this subsection must be equal (after the preliminary reduction).
By subsection 10.1 we then arrive at the following theorem.
\begin{thm}(Tian-Zhu, \cite{Tian-Zhu}, \cite{2Tian-Zhu}) Let $X$ be a Fano manifold and let $\omega_0$ and $\omega_1$ be solutions of 10.4 and 10.5. Then there is an automorphism $g$ in $Aut_0(X)$ such that $g^*(\omega_1)=\omega_0$.
\end{thm}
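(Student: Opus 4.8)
The plan is to assemble the ingredients of subsections 10.1 and 10.2 by a two--stage reduction: first reduce, after a preliminary automorphism, to the case where $\omega_0$ and $\omega_1$ are solitons for the \emph{same} holomorphic field, and then apply the single--field result of subsection 10.1, which itself rests on Theorem 1.2.

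For the first stage I would argue as follows. Since $\omega_0$ solves (10.5), rewriting it as (10.4) shows that $e^{h^{\phi_0}_V}$ is a positive real multiple of $e^{-\phi_0}/(i\ddbar\phi_0)^n$, so $h^{\phi_0}_V$ is real--valued; by the last assertion of Proposition 10.1 this means that $\Im V$ generates a flow of isometries of $\omega_0$. The closure of this flow in the isometry group of $(X,\omega_0)$ is compact, hence lies in a maximal compact subgroup $K_0$ of $Aut_0(X)$; likewise $\Im W$ lies in a maximal compact $K_1$. Iwasawa's theorem gives $g\in Aut_0(X)$ with $gK_1g^{-1}=K_0=:K$. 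Replacing $\omega_1$ by its pullback under $g$ and $W$ by the correspondingly transformed field, I may assume that the imaginary parts of both $V$ and $W$ lie in the Lie algebra of $K$.

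The second stage is the strict--convexity argument. By Proposition 10.5 the functional $f(V)=\int e^{h^\phi_V}(i\ddbar\phi)^n$ is independent of $\phi$, and by Proposition 10.6 both $V$ and $W$ are critical points of it. Choosing a $K$--invariant representative $i\ddbar\phi$, obtained by averaging over $K$ against Haar measure, all the $h^\phi_U$ with $\Im U$ in the Lie algebra of $K$ are real (Proposition 10.1) and depend linearly on $U$ (property 3 of Proposition 10.1); hence the restriction $f_K$ is strictly convex on this space of fields and can have at most one critical point. Therefore $V=W$ after the reduction.

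With $V=W$, both $\omega_0$ and the transformed $\omega_1$ solve the same soliton equation (10.1). I then invoke subsection 10.1 verbatim: connecting the two potentials by a geodesic, $\E_Z$ is affine along it, which forces $\F_Z$ and hence $-\log\int e^{-\phi_t}$ to be affine, so Theorem 1.2 produces a holomorphic vector field whose time--one flow $F$ satisfies $F^*(\text{transformed }\omega_1)=\omega_0$. Composing $F$ with $g$ gives the required element of $Aut_0(X)$. I expect the main obstacle to be the bookkeeping of the reduction step: confirming that $f_K$ is honestly strictly convex and, above all, checking that after conjugating by $g$ the transformed field's imaginary part genuinely lies in the Lie algebra of $K$, so that both fields become admissible competitors for the single critical point of $f_K$.
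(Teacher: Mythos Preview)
Your proposal is correct and follows essentially the same approach as the paper: reduce via Iwasawa's theorem to a common maximal compact $K$, use the strict convexity of $f_K$ (Propositions 10.5 and 10.6) to force $V=W$, and then invoke the single--field argument of subsection 10.1 built on Theorem 1.2. The only additions you make over the paper's own account are the bookkeeping remarks (composing $F$ with the preliminary $g$, checking that the conjugated field lands in the Lie algebra of $K$), which are routine.
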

\section{Appendix}
Here we will prove Proposition 10.1. We suppress the dependence of $\phi$ on $t$ and $s$ in the formulas and use subscripts only to denote differentiation with respect to these variables. 
$$
\frac{d}{ds}\int \dof (i\ddbar\phi)^n e^{a h^{\phi}}=\int \ddot{\phi}_{t,s} (i\ddbar\phi)^n e^{a h^{\phi}} +
$$
$$
+n\int\dof(i\ddbar\dot{\phi}_s)\wedge(i\ddbar\phi)^{n-1}  e^{a h^{\phi}}
+a\int \dof V(\dot{\phi}_s)\wedge(i\ddbar\phi)^{n}  e^{a h^{\phi}}=:I+II+III.
$$
Integrating by parts we get
$$
II=- n\int i\partial\dot{\phi_t}\wedge\dbar\dot{\phi_s}\wedge(i\ddbar\phi)^{n-1} e^{ah^\phi}-an\int i\dof\partial h^{\phi}\wedge\dbar\dot{\phi_s}\wedge(i\ddbar\phi)^{n-1} e^{ah^\phi}.
$$
Recall that $i\dbar h^\phi=V\rfloor i\ddbar\phi$ so that we have  $-i\partial h^\phi=\bar V\rfloor i\ddbar\phi.$ Since contraction with a vector field is an antiderivation we get
$$
0=\bar V\rfloor(\dbar\dot{\phi_s}\wedge(i\ddbar\phi)^n)=\overline{V(\dot{\phi_s})}
(i\ddbar\phi)^n+n\dbar\dot{\phi_s}\wedge i\partial h^\phi\wedge (i\ddbar\phi)^{n-1}.
$$
Inserting this above we see that
$$
II=- n\int i\partial\dot{\phi_t}\wedge\dbar\dot{\phi_s}\wedge(i\ddbar\phi)^{n-1} e^{ah^\phi}-a\int\dof \overline{V(\dot{\phi_s})}(i\ddbar\phi)^ne^{ah^\phi}.
$$
Hence 
$$
\frac{d}{ds}\int \dof (i\ddbar\phi)^n e^{a h^{\phi}}=
\int \ddot{\phi}_{t,s} (i\ddbar\phi)^n e^{a h^{\phi}}- n\int i\partial\dot{\phi_t}\wedge\dbar\dot{\phi_s}\wedge(i\ddbar\phi)^{n-1} e^{ah^\phi} +
$$
$$
2ia\int \dof \Im V(\dot{\phi}_s)\wedge(i\ddbar\phi)^{n}  e^{a h^{\phi}}.
$$
But the left hand side of this equality is  real so  the last term must be zero (which is also clear since $\phi$ is invariant under the flow of $\Im V$).   We are then left with the formula in Proposition 10.1

\def\listing#1#2#3{{\sc #1}:\ {\it #2}, \ #3.}

\end{document}